\pgfplotsset{
    colormap={slategraywhite}{
        rgb255=(112,128,144)
        rgb255=(255,159,101)
    }
}
\pgfplotsset{compat=1.15}
\newcommand{\be}{\begin{equation}}
\newcommand{\ee}{\end{equation}}
\newcommand{\C}{\mathbb{C}}
\newcommand{\N}{\mathbb{N}}
\newcommand{\Z}{\mathbb{Z}}
\newcommand{\R}{\mathbb{R}}
\newcommand{\cB}{\mathcal{B}}
\newcommand{\cT}{\mathcal{T}}
\newcommand{\cS}{\mathcal{S}}
\newcommand{\cD}{\mathcal{D}}
\newcommand{\zvec}{{\pmb{z}}}
\renewcommand{\Re}{\mathop{\mathrm{Re}}}
\renewcommand{\Im}{\mathop{\mathrm{Im}}}
\newcommand{\abs}[1]{\left\vert{#1}\right\vert}
\newcommand{\inn}[2]{\left\langle #1, #2 \right\rangle}
\newcommand{\prn}[1]{\left( #1 \right)}
\newcommand{\floor}[1]{\left\lfloor #1 \right\rfloor}
\newcommand{\ceil}[1]{\left\lceil #1 \right\rceil}
\newcommand{\wh}[1]{{\widehat{#1}}}
\newcommand{\p}{\partial}
\newcommand{\Span}{\mathrm{span\,}}
\newcommand{\Supp}{\mathrm{supp\,}}
\newtheorem{theorem}{Theorem}[section]
\theoremstyle{plain}
\newtheorem{definition}{Definition}[section]
\newtheorem{proposition}{Proposition}[section]
\newtheorem{remark}{Remark}[section]
\newtheorem*{notation*}{Notation}
\numberwithin{equation}{section}
\DeclareMathOperator\re{Re}
\DeclareMathOperator\im{Im}
\begin{document}

\title{Complex Box Splines}

\author{Fabio Marcelo Carvalho dos Santos}
\address{Department of Mathematics, Technical University of Munich, Boltzmannstr. 3, 85748 Garching b. Munich, Germany}
\email{ge37dey@mytum.de}

\author{Peter Massopust${}^1$}
\thanks{${}^1$Corresponding author}
\address{Department of Mathematics, Technical University of Munich, Boltzmannstr. 3, 85748 Garching b. Munich, Germany}
\email{massopust@ma.tum.de}

\begin{abstract}
The novel concept of box spline of complex degree is introduced and several of its properties derived and discussed. These box splines of complex degree generalize and extend the classical box splines. Relations to a class of fractional derivatives defined on certain function spaces are also exhibited.
\vskip 12pt\noindent
\textbf{Keywords and Phrases:} Complex B-splines, complex truncated power functions, box splines, fractional differential operators, distributions
\vskip 6pt\noindent
\textbf{AMS Subject Classification (2020):} 41A15, 26A33, 46F10, 65D07 
\end{abstract}
%
\maketitle
\section{Introduction}\label{sec1}
The mathematical concept of splines has been a fertile ground for research and innovation, with complex B-splines and box splines being two prominent areas of study. Both have found applications in various domains, such as computer graphics, image and signal processing, and numerical analysis. However, the intersection of these two concepts, known as box splines of complex degree, for short, complex box splines presents an unexplored opportunity with potentially far-reaching implications.

The motivation behind this paper lies in the fusion of these two powerful mathematical constructs. By introducing complex box splines, this research aims to harness the strengths of both complex B-splines and box splines, creating a novel mathematical tool that could transform the way multivariate analysis is approached. While the potential applications of this concept are vast, the focus of this paper is strictly on the theoretical aspects and foundations of complex box splines. This includes the mathematical properties, construction techniques, and underlying theoretical principles. The aim is to create a comprehensive theoretical framework that bridges the gap between complex B-splines and box splines. Moreover, the existing literature, while rich in individual explorations of complex B-splines and box splines, lacks a unified theoretical study that connects these two areas. This paper seeks to fill this void, providing a systematic and purely theoretical approach to understanding complex box splines.

The first inspiration comes from the paper \cite{Xu2009} which introduces fractional box splines, the real counterparts to complex box splines, and an extension of the traditional concept. The aforementioned work explores the mathematical intricacies of fractional box splines, opening new perspectives. Amongst others, this approach will be extended to complex degrees.

The second key inspiration is drawn from the paper \cite{Forster2005} in which complex B-splines, more precisely cardinal polynomial B-splines of complex degree, were first introduced as a means of adding phase information to the approximants, thus creating a complex-valued transform for the purposes of signal and image analysis. (See \cite{F}, for a discussion of the effectiveness of such complex-valued transforms.) In \cite{Forster2010}, it was shown that complex B-splines are solutions of certain distributional fractional differential equations. A similar representation is derived in this paper for complex box splines extending the mathematical framework and forging a new path in the exploration of these mathematical constructs. 

The novel complex box splines allow the approximation of functions in $\R^d$, $d\in\N$, which have varying degrees of smoothness along different directions. Here, the reference \cite{Xu2009} comes to bear. However, at each point in $\R^d$, the complex box spline also assigns a direction on the unit sphere of $\R^{d}$ arising from the imaginary parts of the complex degrees. Data that require such directions can, for instance, be found in geophysics and are known as S (shear), L (Love) and R (Rayleigh) waves. Hence, complex box splines provide a higher-dimensional complex-valued transform.

The structure of this paper is as follows. In Section 2, complex truncated power functions and complex B-splines are reviewed and the properties discussed that are relevant for the purposes of this paper. The next section gives a brief summary of box splines and F-box splines using a distributional approach. Section 4 then defines multivariate truncated complex powers and complex box splines. Here, several relations between these two concepts are derived and their properties described. In Section 5, the relationship between complex box splines and a certain type of fractional derivative is exhibited demonstrating the flexibility of this new class of multivariate approximants.
\section{Complex Truncated Power Functions and Complex B-Splines}
In this section, we briefly review some of the properties of complex backward difference operators and complex B-splines. We also introduce complex one-dimensional truncated power functions whose extension to higher dimensions plays an important role later when complex box splines are defined. 

Throughout this paper, the set of natural numbers is denoted by $\N :=\{0,1, 2, \ldots\}$. We also set $\N_n := \{0,1, \ldots, n\}$.
%
%
%

\subsection{Complex Backward Difference Operator}
In contrast to the approach taken when defining traditional B-splines, we introduce the complex backward difference operator directly in the setting of tempered distributions, rather than first defining it in the time domain. The reason for this choice is that it provides a more direct and streamlined exposition in the context of complex B-splines. This approach also allows us to immediately utilize the powerful and flexible framework provided by the theory of tempered distributions. Furthermore, this approach aligns well with the goal of this paper, which emphasizes the use of distribution theory in studying box splines. The references for this section are \cite{Forster2005,FM,Forster2010}.

\begin{definition}[Complex Backward Difference Operator]\label{ComplexBackward}
Let $T$ be a tempered distribution on $\R$ and $z$ be a complex number with $\Re(z) > -1$. The complex backward difference operator $\nabla^{z+1}$ of degree $z+1$ is defined as
\[
    \inn{\nabla^{z+1}T}{\varphi} \coloneqq \sum_{k\geq 0} (-1)^k \binom{z+1}{k} \inn{\tau_k T}{\varphi} = \sum_{k\geq 0} (-1)^k \binom{z+1}{k} \inn{T}{\tau_{-k}\varphi},
\]
where $\tau_{-k}\varphi:=\varphi(\cdot+k)$ and $\varphi\in\cS(\R)$.
\end{definition}


\begin{proposition}\label{WellDefined}
The complex backward difference operator is well-defined for tempered distributions.
\end{proposition}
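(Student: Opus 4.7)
The plan is to verify (a) for each $\varphi\in\cS(\R)$ the defining series converges absolutely, and (b) the map $\varphi\mapsto\inn{\nabla^{z+1}T}{\varphi}$ is continuous on $\cS(\R)$. Both statements will be consequences of a single termwise estimate of the form
\[
    \abs{(-1)^{k}\binom{z+1}{k}\inn{T}{\tau_{-k}\varphi}}\;\le\;a_{k}\,q(\varphi),\qquad \sum_{k\ge 0}a_{k}<\infty,
\]
where $q$ is a continuous seminorm on $\cS(\R)$ independent of $k$.

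The decay factor $a_{k}$ will come from the asymptotics of the generalized binomial coefficient. Starting from $\binom{z+1}{k}=\frac{\Gamma(z+2)}{\Gamma(k+1)\Gamma(z+2-k)}$, Stirling's expansion together with the reflection identity for $\Gamma$ yields
\[
    \binom{z+1}{k}=\frac{(-1)^{k}}{\Gamma(-z-1)\,k^{z+2}}\bigl(1+O(k^{-1})\bigr)\qquad (k\to\infty),
\]
so $\abs{\binom{z+1}{k}}=O\bigl(k^{-(\Re z+2)}\bigr)$; the hypothesis $\Re z>-1$ pushes the exponent strictly below $-1$. The seminorm factor $q(\varphi)$ will come from the temperedness of $T$: there exist $C_{T}>0$ and $N\in\N$ with $\abs{\inn{T}{\psi}}\le C_{T}\,p_{N}(\psi)$, where $p_{N}(\psi)=\sum_{m,\ell\le N}\sup_{y}(1+\abs{y})^{m}\abs{\psi^{(\ell)}(y)}$. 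Evaluating at $\psi=\tau_{-k}\varphi$ and performing the substitution $y\mapsto y-k$ inside each sup, after splitting the range of $y$ into $\abs{y}\le k/2$ and $\abs{y}>k/2$ and repeatedly using the arbitrary-order rapid decay of $\varphi^{(\ell)}$, one arrives at a bound of the form $\abs{\inn{T}{\tau_{-k}\varphi}}\le C(1+k)^{N}\,p_{N'}(\varphi)$ for a suitable $N'\ge N$.

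Multiplying the two bounds gives the desired inequality with $a_{k}=C(1+k)^{N}k^{-(\Re z+2)}$ and $q=p_{N'}$, whose tail sum converges once the algebraic decay of the binomials dominates the polynomial growth of the seminorm factors. Linearity of $\nabla^{z+1}T$ is immediate from the bilinearity of the duality pairing, and continuity of $\nabla^{z+1}T$ follows because the final estimate factors through the single Schwartz seminorm $p_{N'}$, so the bound is uniform on bounded subsets of $\cS(\R)$; therefore $\nabla^{z+1}T\in\cS'(\R)$. The main technical obstacle is the seminorm bookkeeping in the second step: a naive application of $(1+\abs{y-k})^{m}\le(1+\abs{y})^{m}(1+k)^{m}$ overestimates the $k$-growth of $p_{N}(\tau_{-k}\varphi)$ and would destroy summability, so one must invest the rapid decay of $\varphi$ and all its derivatives carefully enough that the index $N'$ remains compatible with the range $\Re z>-1$ of the hypothesis.
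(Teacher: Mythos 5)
Your framework is the right one and is in several respects more careful than the paper's own argument: you replace the paper's large-$\abs{z}$ Gamma asymptotics (which are invoked in the wrong regime, since here $z$ is fixed and $k\to\infty$, so the decay of $\binom{z+1}{k}$ is the polynomial $k^{-(\Re z+2)}$ rather than the factorial $\abs{z}^{k}/k!$) by the correct large-$k$ estimate, and, unlike the paper, you recognize that absolute summability of the binomial coefficients alone proves nothing until the growth in $k$ of $\inn{T}{\tau_{-k}\varphi}$ is also controlled. The paper's proof stops after the coefficient bound and never touches this second factor.

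However, the step you defer to ``careful seminorm bookkeeping'' is not a bookkeeping issue but a genuine obstruction, and the proof cannot be completed as written. Your bound $\abs{\inn{T}{\tau_{-k}\varphi}}\le C(1+k)^{N}p_{N'}(\varphi)$ is sharp, and the exponent $N$ is dictated by the order and growth of $T$, hence is completely independent of $z$; the product $(1+k)^{N}k^{-(\Re z+2)}$ is summable only when $N<\Re z+1$, which fails for all but the mildest distributions. Concretely, take $T$ to be the polynomial $t\mapsto t^{2}$ and $\varphi\in\cS(\R)$ with $\int\varphi\neq 0$; then
\[
\inn{\tau_{k}T}{\varphi}=k^{2}\int_{\R}\varphi(t)\,dt-2k\int_{\R}t\,\varphi(t)\,dt+\int_{\R}t^{2}\varphi(t)\,dt,
\]
while $(-1)^{k}\binom{z+1}{k}\sim k^{-(z+2)}/\Gamma(-z-1)$, so the general term of the defining series has magnitude of order $k^{-\Re z}$ and the series diverges for every non-integer $z$ with $-1<\Re z\le 1$. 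No amount of exploiting the rapid decay of $\varphi$ removes the $(1+k)^{N}$ growth, since already $\sup_{y}(1+\abs{y})^{N}\abs{\varphi(y+k)}\ge(1+k)^{N}\abs{\varphi(0)}$. The proposition therefore needs an additional hypothesis on $T$ --- for instance boundedness, or support in a half-line $[a,\infty)$ with moderate growth, as holds for the truncated powers $k_{z}$ to which the operator is actually applied later (there $\inn{\tau_{k}T}{\varphi}$ decays rapidly because the translated support escapes to infinity). Your attempt, like the paper's, is valid only on such a restricted class, and your closing paragraph in effect names the exact point where both break down.
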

\begin{proof}


We need to ensure that the series defining the complex backward difference operator is absolutely convergent in order to apply linearity. Notice that 
\[
    \sum_{k\geq 0}\abs{\binom{z+1}{k}} < \infty.
\]
Indeed, for large $\abs{z}$, we have the following asymptotic behavior of the Gamma function:
\[
    \frac{\Gamma(z+a)}{\Gamma(z+b)}=z^{a-b}+O(\abs{z}^{-1}),
\]
for any curve that joins $z=0$ and $z=\infty$. 
Thus,
\[
    \binom{z+1}{k} = \frac{\Gamma(z+2)}{\Gamma(k+1)\Gamma(z+2-k)} = \frac{1}{\Gamma(k+1)}z^k(1+O(\abs{z}^{-1}),
\]
wich implies
\[
    \sum_{k\geq 0}\abs{\binom{z+1}{k}}\sum_{k\geq 0}\abs{\frac{1}{\Gamma(k+1)}z^k(1+O(\abs{z}^{-1})} \leq Ce^{\abs{z}^{-1}},
\]
where the last inequality follows from the Taylor expansion of the exponential function and $C > 0$ is some constant.
\end{proof}


The next proposition derives an expression for the Fourier transform of the complex difference operator which provides insights into its behavior. (See also, \cite{Unser2000} for a derivation for real $z$ and \cite{Forster2005} where the extension to complex $z$ is stated.)
We use the following form of the Fourier transform of $f\in L^1(\mathbb{R})$,
\[
    \widehat{f}(\omega) :=\int_{\mathbb{R}^n}f(x)e^{-i\omega x}\, dx,
\]
and extend it in the usual way to tempered distributions.
\begin{proposition}[Fourier Transform of Complex Difference Operators]
Let $T$ be a tempered distribution on $\R$. Then
\be\label{Fourierbackward}
    \inn{\wh{\nabla^{z+1} T}}{\varphi (\omega)} = \sum_{k\geq 0} (-1)^k \binom{z+1}{k} \inn{e^{-ik\omega}\wh{T}}{\varphi(\omega)},
\ee  
where $\varphi\in\cS(\R)$.
\end{proposition}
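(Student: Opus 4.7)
The plan is to reduce the identity to the well-known duality between translation and modulation under the Fourier transform, and then invoke the absolute convergence already established in Proposition \ref{WellDefined} to justify exchanging the Fourier transform with the infinite series.

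First, I would start from the definition of the Fourier transform on tempered distributions, writing
\[
    \inn{\wh{\nabla^{z+1} T}}{\varphi} = \inn{\nabla^{z+1} T}{\wh{\varphi}}
\]
for any $\varphi\in\cS(\R)$. Applying Definition \ref{ComplexBackward} with the test function $\wh{\varphi}$ gives
\[
    \inn{\nabla^{z+1}T}{\wh{\varphi}} = \sum_{k\geq 0}(-1)^k\binom{z+1}{k}\inn{T}{\tau_{-k}\wh{\varphi}}.
\]

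Next, I would use the standard Fourier identity $\tau_{-k}\wh{\varphi} = \widehat{e^{-ik(\cdot)}\varphi}$, which follows by a direct change of variables in the Fourier integral and is valid on $\cS(\R)$. Substituting this into each summand yields
\[
    \inn{T}{\tau_{-k}\wh{\varphi}} = \inn{T}{\widehat{e^{-ik(\cdot)}\varphi}} = \inn{\wh{T}}{e^{-ik(\cdot)}\varphi} = \inn{e^{-ik\omega}\wh{T}}{\varphi(\omega)},
\]
where the middle equality is the definition of the distributional Fourier transform and the last is the action of multiplication by the smooth, polynomially bounded function $e^{-ik\omega}$ on $\wh{T}$.

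The only delicate point is the interchange of summation with the distributional pairing, which is where Proposition \ref{WellDefined} is needed: since $\sum_{k\geq 0}|\binom{z+1}{k}|<\infty$ for $\Re(z)>-1$ and since each $\tau_{-k}\wh{\varphi}$ remains in $\cS(\R)$ with Schwartz seminorms bounded uniformly in $k$ (because translations of a Schwartz function have moduli with polynomial growth balanced by the decay of $\wh{\varphi}$), the series of complex numbers defining the pairing converges absolutely, and one may legitimately interchange the sum with the pairing against $T$. This is the main technical obstacle, but it is already covered by the estimates in the proof of Proposition \ref{WellDefined}. Collecting the identities then gives \eqref{Fourierbackward}.
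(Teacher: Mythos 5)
Your proof is correct and follows essentially the same route as the paper's: the paper likewise applies the Fourier transform term by term, invoking the absolute convergence of $\sum_{k\geq 0}\bigl|\binom{z+1}{k}\bigr|$ together with the translation--modulation identity $\widehat{\tau_k T}=e^{-ik\omega}\widehat{T}$, which you simply unpack at the level of test functions via $\tau_{-k}\widehat{\varphi}=\widehat{e^{-ik(\cdot)}\varphi}$. One minor caveat: the Schwartz seminorms of $\tau_{-k}\widehat{\varphi}$ are not uniformly bounded in $k$ (they grow polynomially under translation), but this looseness is already present in the paper's own Proposition~\ref{WellDefined}, on which both arguments rest.
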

\noindent
\textbf{Note:} On occasion and in abuse of notation, we write the argument of a test function into the pairing if it clarifies the meaning of an expression.
\begin{proof}
As $\sum\limits_{k\geq 0}\abs{\binom{z+1}{k}}<\infty$ and the Fourier transform is linear, we have that
\begin{align*}
    \inn{\wh{\nabla^{z+1} T}}{\varphi(\omega)} &=\sum_{k\geq 0} (-1)^k \binom{z+1}{k} \inn{\tau_k\wh{T}}{\varphi(\omega)} \\
    &=\sum_{k\geq 0} (-1)^k \binom{z+1}{k} \inn{e^{-ik\omega}\wh{T}}{\varphi(\omega)}.
\end{align*}
\end{proof}

We summarize some properties of complex difference operators and refer to \cite{Forster2005} for their proofs.

\begin{proposition}[Properties of Complex Difference Operators]
The complex backward difference operator satisfies the following properties:
\begin{enumerate}
    \item[(i)] If $T$ is a tempered distribution on $\R$, then
    \be\label{ComplexBackwardtT}
        \inn{\nabla^{z+1}(tT)}{\varphi} = \inn{(z+1)\nabla^z T}{\varphi}+\inn{[t-(z+1)]\nabla^{z+1}T}{\varphi}.
    \ee
    \item[(ii)] If $T_1$ and $T_2$ are tempered distributions on $\R$, then
    \be\label{ComplexBackwardConvolution}
        \inn{\nabla^{(z_1+1)+(z_2+1)}(T_1\ast T_2)}{\varphi}=\inn{\nabla^{z_1+1}T_1 \ast \nabla^{z_2+1}T_2}{\varphi}.
    \ee
\end{enumerate}
\end{proposition}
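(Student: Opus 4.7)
The plan is to treat the two parts separately. Part (i) is a Leibniz-style identity that I would derive by direct manipulation of the defining series, while part (ii) is most cleanly proved via the Fourier representation already established in \eqref{Fourierbackward}.

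For part (i), I would begin by expanding $\inn{\nabla^{z+1}(tT)}{\varphi}$ through Definition~\ref{ComplexBackward}, rewriting each summand as $\inn{T}{t\cdot\tau_{-k}\varphi}$, and employing the elementary decomposition
\[
    x\,\varphi(x+k) = (x+k)\,\varphi(x+k) - k\,\varphi(x+k)
\]
to split the resulting series into two pieces. The first piece reassembles into $\inn{t\,\nabla^{z+1}T}{\varphi}$, while the second piece carries an extra factor of $k$. I would then apply the identity $k\binom{z+1}{k} = (z+1)\binom{z}{k-1}$ and shift the summation index by one, after which Pascal's rule $\binom{z+1}{k} = \binom{z}{k} + \binom{z}{k-1}$ (valid for complex upper index) shows that the residual series equals $\inn{\nabla^z T - \nabla^{z+1}T}{\varphi}$. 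Combining the two pieces delivers $\inn{(z+1)\nabla^z T}{\varphi} + \inn{[t-(z+1)]\nabla^{z+1}T}{\varphi}$, as required.

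For part (ii), I would exploit the closed form
\[
    \sum_{k\geq 0}(-1)^k\binom{z+1}{k}e^{-ik\omega} = (1-e^{-i\omega})^{z+1},
\]
whose absolute convergence is supplied by Proposition~\ref{WellDefined}. Together with \eqref{Fourierbackward} this recasts the complex backward difference as multiplication by the symbol $(1-e^{-i\omega})^{z+1}$ on the Fourier side. Both sides of the asserted identity then have Fourier transforms equal to $(1-e^{-i\omega})^{(z_1+1)+(z_2+1)}\wh{T_1}(\omega)\wh{T_2}(\omega)$, thanks to the convolution theorem and the exponent law $(1-e^{-i\omega})^{a}(1-e^{-i\omega})^{b} = (1-e^{-i\omega})^{a+b}$ (for a consistent choice of branch, which the constraint $\Re(z_i) > -1$ permits), so Fourier inversion closes the argument. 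A purely combinatorial alternative, which avoids Fourier inversion, writes $\inn{\nabla^{z_1+1}T_1 \ast \nabla^{z_2+1}T_2}{\varphi}$ as a Cauchy product over the single index $k = j+\ell$ and invokes Vandermonde's identity $\binom{(z_1+1)+(z_2+1)}{k} = \sum_{j=0}^{k}\binom{z_1+1}{j}\binom{z_2+1}{k-j}$ to recover the left-hand side.

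The principal technical obstacle is ensuring that the convolutions appearing in (ii) actually make sense, since convolution is not unrestricted on $\cS'(\R)$. One must therefore work with classes of tempered distributions (for instance, compactly supported ones, or pairs that are convolvable in the standard sense) so that $T_1\ast T_2$, $\nabla^{z_i+1}T_i$, and their convolutions are all meaningful and the convolution theorem is available; this is the standard setting assumed in the complex B-spline literature cited here. Apart from this hypothesis, all series rearrangements are justified by the absolute convergence from Proposition~\ref{WellDefined}.
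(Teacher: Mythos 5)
The paper itself does not prove this proposition: it states the two identities and defers entirely to \cite{Forster2005}, so there is no in-paper argument to compare against. Judged on its own, your proof is correct and supplies exactly what the paper omits. In (i), the splitting $x\varphi(x+k)=(x+k)\varphi(x+k)-k\varphi(x+k)$, the identity $k\binom{z+1}{k}=(z+1)\binom{z}{k-1}$, the index shift, and Pascal's rule $\binom{z+1}{k}=\binom{z}{k}+\binom{z}{k-1}$ do combine to give $\inn{\nabla^{z+1}(tT)}{\varphi}=\inn{t\,\nabla^{z+1}T}{\varphi}+(z+1)\inn{\tau_1\nabla^{z}T}{\varphi}$ together with $\tau_1\nabla^{z}T=\nabla^{z}T-\nabla^{z+1}T$, which is the asserted formula. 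In (ii), both routes work: the Fourier-symbol argument (via \eqref{Fourierbackward} and the generalized binomial series $\sum_{k\ge 0}(-1)^k\binom{z+1}{k}e^{-ik\omega}=(1-e^{-i\omega})^{z+1}$, with $w^{a}w^{b}=w^{a+b}$ valid once a single branch of $\log w$ is fixed), and the Cauchy-product argument with Chu--Vandermonde, which has the merit of avoiding Fourier inversion and exposing the formal identity $(1-\tau_1)^{a}(1-\tau_1)^{b}=(1-\tau_1)^{a+b}$ directly.

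Two caveats, which concern the statement as much as your argument. First, in (i) the operator $\nabla^{z}$ of degree $z$ falls under Definition~\ref{ComplexBackward} only when $\Re(z)>0$, and the second piece of your split series has terms of size $k\,\lvert\binom{z+1}{k}\rvert\sim C\,k^{-\Re(z)-1}$, which is absolutely summable precisely when $\Re(z)>0$; so the implicit hypothesis for (i) is $\Re(z)>0$, not merely $\Re(z)>-1$ (consistent with the restriction the paper imposes on the analogous recurrence for $B_z$). You should state this explicitly rather than leaning on Proposition~\ref{WellDefined}, which only controls $\sum_k\lvert\binom{z+1}{k}\rvert$. Second, your closing remark that (ii) requires a convolvability hypothesis on $T_1,T_2$ is correct and should be kept: convolution is not defined for arbitrary pairs of tempered distributions, and neither the statement nor the paper records this restriction.
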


\subsection{Truncated Complex Power Functions}
In this section, we will define {truncated complex power functions} and present those properties which relate to complex B-splines and later to complex box splines.

We extend the truncated power function $t\mapsto t_+^n$ to a complex degree $z$ by setting
\[
    t_+^z\coloneqq \begin{cases}
        t^z = e^{z\log(t)} = t^{\Re(z)}e^{i\Im(z)\log(t)}, & \text{if $t>0$}; \\
        0, & \text{else}.
    \end{cases}
\]

\begin{figure}[H]
    \centering
    \includegraphics[scale=0.4]{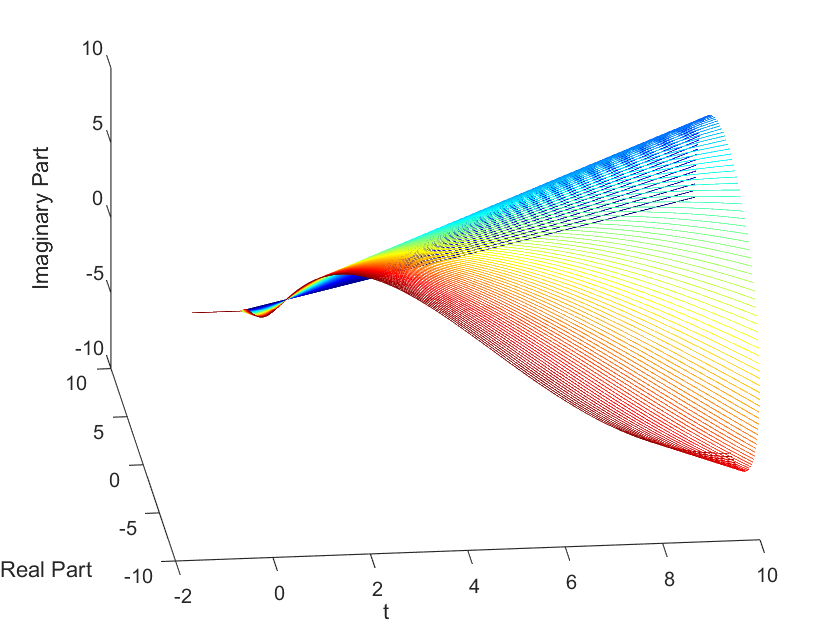}
    \caption{Truncated complex power function with $z=1+i\gamma$ for $\gamma\in[0,2]$.}
    \label{TruncatedComplexPower}
\end{figure}

The next result is a straightforward consequence of the definition of the complex truncated power function. The easy proof is left to the reader. 

\begin{theorem}\label{TruncatedComplexCn}
For $\Re(z)\geq 0$, we have
\[
    t_+^z\in C^{\floor{\Re(z)}}(\R),
\]
where $\floor{\cdot}$ denotes the floor function.
\end{theorem}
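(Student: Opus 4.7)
The plan is to verify continuity of the derivatives of $t_+^z$ at the only nontrivial point, namely $t=0$. Away from the origin the function is manifestly smooth: on $(-\infty,0)$ it vanishes identically, while on $(0,\infty)$ it equals $e^{z\log t}$, the composition of two analytic functions. The whole game is therefore to match one-sided limits at $0$, where the oscillatory factor $e^{i\Im(z)\log t}$ competes against the decay $t^{\Re(z)}$.

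First I would establish, by induction on $k$, the closed-form expression
\[
    \frac{d^k}{dt^k}\, t^z \;=\; z(z-1)\cdots(z-k+1)\, t^{z-k}, \qquad t>0,
\]
with the corresponding derivatives vanishing identically for $t<0$. Writing $n:=\floor{\Re(z)}$, the size of the $k$-th right-hand limit at $0$ is controlled by
\[
    \abs{t^{z-k}} \;=\; t^{\Re(z)-k},
\]
which tends to $0$ as $t\to 0^+$ precisely when $\Re(z)-k>0$. For each $k\le n$ in the generic case $\Re(z)\notin\N$ this strict inequality holds because $k\le n<\Re(z)$; the right derivative therefore extends continuously across $0$ with value $0$, matching the vanishing left derivative.

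The desired conclusion $t_+^z\in C^n(\R)$ then follows by iterating the standard one-variable fact that a continuous function which is differentiable off a single point and whose derivative has equal one-sided limits there is $C^1$ across that point. Applied inductively from $k=0$ up to $k=n$, this glues the two halves into a $C^n$ function on $\R$. The delicate case to watch is the boundary value $k=\Re(z)\in\N$: there $\abs{t^{z-k}}=1$ and $t^{z-k}=e^{i\Im(z)\log t}$ oscillates unless $\Im(z)=0$, so no further regularity can be gained; this is exactly why the truncation in the theorem occurs at $\floor{\Re(z)}$ rather than one level higher, and it is the only step that demands genuine care.
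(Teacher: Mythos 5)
The paper gives no proof of this theorem (it is ``left to the reader''), so there is nothing to compare against; your argument is the natural one and is essentially correct on the generic set $\Re(z)\notin\N$. The closed form for $\frac{d^k}{dt^k}\,t^z$ on $(0,\infty)$, the observation that $\abs{t^{z-k}}=t^{\Re(z)-k}\to 0$ precisely when $k<\Re(z)$, and the gluing of the two halves via the standard criterion (continuous at the point, differentiable off it, one-sided limits of the derivative agree) are all sound, and you correctly locate the only step that requires care.

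Two things should be made explicit rather than left implicit in the phrase ``the generic case''. First, the base case $k=0$ of your induction already fails when $\Re(z)=0$: there $\abs{t^z}\equiv 1$ on $(0,\infty)$, so for $z=0$ one gets the Heaviside jump and for $z=i\gamma$ with $\gamma\neq 0$ the limit as $t\to 0^+$ does not exist at all because of the oscillation $e^{i\gamma\log t}$. Since the theorem admits $\Re(z)\geq 0$ and would then assert $C^0$ regularity, the statement as written is false at $\Re(z)=0$; your proof correctly cannot deliver it, and you should say so. Second, the same failure recurs whenever $\Re(z)=n\in\N$ with $n\geq 1$: then $\floor{\Re(z)}=n$, the claimed $n$-th derivative behaves like $t^{z-n}$ with $\abs{t^{z-n}}=1$ near the origin, and the function is $C^{n-1}$ but not $C^{n}$. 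Your closing remark gestures at exactly this phenomenon but misreads its role: the floor is not what rescues the integer case --- nothing does --- it is what makes the claim true when $\Re(z)\notin\N$, because only then is $\floor{\Re(z)}<\Re(z)$ strictly. The honest conclusion of your argument is $t_+^z\in C^{\floor{\Re(z)}}(\R)$ for $\Re(z)>0$, $\Re(z)\notin\N$ (equivalently $C^{\ceil{\Re(z)}-1}(\R)$ in general), and the theorem's hypothesis should be tightened accordingly.
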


$t_+^z$ induces a tempered distribution on $\R$ by setting
\be\label{TruncatedComplexTempered}
    \inn{t_+^z}{\varphi(t)} := \int_0^\infty t^z \varphi(t) dt,
\ee
where $\varphi\in\cS(\R)$. As $t_+^z$ is locally integrable for $\Re(z)>-1$, the integral is well-defined for $\Re(z)>-1$. 


\begin{proposition}[Weak Derivative of Truncated Complex Powers \cite{Gelfand1966}]
For $-1<\Re(z)<0$, we have
\be\label{ComplexTruncatedWeakDerivative}
    \inn{Dt_+^z}{\varphi(t)} = \inn{zt_+^{z-1}}{\varphi(t)-\varphi(0)}
\ee
where $D$ denotes the ordinary derivative.
\end{proposition}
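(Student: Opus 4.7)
The plan is to compute $\inn{Dt_+^z}{\varphi}$ directly from the distributional definition of the derivative, integrating by parts on $[\varepsilon,\infty)$ rather than $[0,\infty)$, and sending $\varepsilon\to 0^+$. The truncation is forced on us because, for $-1<\Re(z)<0$, the integrand $t^{z-1}$ that arises after integration by parts fails to be locally integrable at $0$, so we must pair it against $\varphi(t)-\varphi(0)$ to regularize, and the whole point of the identity \eqref{ComplexTruncatedWeakDerivative} is precisely to encode this regularization.

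I would first unfold
\[
\inn{Dt_+^z}{\varphi} = -\inn{t_+^z}{\varphi'} = -\lim_{\varepsilon\to 0^+}\int_\varepsilon^\infty t^z\varphi'(t)\,dt,
\]
which is legitimate because $t\mapsto t^z\varphi'(t)$ is integrable on $(0,\infty)$ (here $\Re(z)>-1$ and $\varphi'\in\cS(\R)$). A genuine integration by parts on $[\varepsilon,\infty)$, where $t^z$ is smooth, yields
\[
-\int_\varepsilon^\infty t^z\varphi'(t)\,dt = \varepsilon^z\varphi(\varepsilon) + z\int_\varepsilon^\infty t^{z-1}\varphi(t)\,dt,
\]
since $t^z\varphi(t)\to 0$ as $t\to\infty$ because $\varphi\in\cS(\R)$.

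The key algebraic step is to split $\varphi(t)=\varphi(0)+[\varphi(t)-\varphi(0)]$ inside the remaining integral. For the constant piece, the condition $\Re(z)<0$ allows us to evaluate
\[
z\varphi(0)\int_\varepsilon^\infty t^{z-1}\,dt = \varphi(0)\bigl[t^z\bigr]_\varepsilon^\infty = -\varphi(0)\,\varepsilon^z,
\]
which exactly cancels the boundary contribution once we write $\varepsilon^z\varphi(\varepsilon)=\varepsilon^z\varphi(0)+\varepsilon^z[\varphi(\varepsilon)-\varphi(0)]$. The leftover $\varepsilon^z[\varphi(\varepsilon)-\varphi(0)]=O(\varepsilon^{\Re(z)+1})$ tends to $0$ because $\Re(z)+1>0$, by a first-order Taylor estimate on $\varphi$.

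The last thing to verify is that the remaining integral has a finite limit and that this limit is the claimed distributional pairing. Near $0$, a first-order Taylor estimate gives $\varphi(t)-\varphi(0)=O(t)$, so $t^{z-1}[\varphi(t)-\varphi(0)]=O(t^{\Re(z)})$, integrable because $\Re(z)>-1$; near $\infty$, $\varphi(t)-\varphi(0)$ is bounded (in fact Schwartz modulo the constant $\varphi(0)$) while $|t^{z-1}|=t^{\Re(z)-1}$ with $\Re(z)-1<-1$. Hence by dominated convergence
\[
\lim_{\varepsilon\to 0^+}z\int_\varepsilon^\infty t^{z-1}[\varphi(t)-\varphi(0)]\,dt = z\int_0^\infty t^{z-1}[\varphi(t)-\varphi(0)]\,dt,
\]
which is precisely $\inn{zt_+^{z-1}}{\varphi(t)-\varphi(0)}$ in the sense of \eqref{TruncatedComplexTempered} applied to the subtracted test function. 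The main obstacle is the bookkeeping of the two divergent $\varepsilon^z$ contributions: one must see that integration by parts combined with subtraction of $\varphi(0)$ cancels them exactly, which is the analytic content of the Hadamard regularization hidden in the statement.
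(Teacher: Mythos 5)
Your argument is correct and complete. Note that the paper does not prove this proposition at all: it is stated with a citation to \cite{Gelfand1966}, so there is no in-paper proof to compare against. Your derivation --- cutting off at $\varepsilon$, integrating by parts on $[\varepsilon,\infty)$, splitting $\varphi(t)=\varphi(0)+[\varphi(t)-\varphi(0)]$ so that the explicit antiderivative $z\varphi(0)\int_\varepsilon^\infty t^{z-1}\,dt=-\varphi(0)\varepsilon^z$ (valid precisely because $\Re(z)<0$) cancels the boundary term, and controlling the remainders via $\varphi(t)-\varphi(0)=O(t)$ and $\Re(z)+1>0$ --- is exactly the standard Gel'fand--Shilov regularization computation that the citation points to, and it correctly identifies the surviving integral with the pairing $\inn{zt_+^{z-1}}{\varphi(t)-\varphi(0)}$ in the regularized sense intended by \eqref{ComplexTruncatedWeakDerivative}.
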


\begin{remark}\label{ImportantRemark}
\begin{enumerate}
    \item[(i)] If we consider $\varphi \in \cS(\R)$ with the additional condition $\varphi(0)=0$, a significant simplification occurs:
        \be\label{ComplexTruncatedWeakDerivativeSpecialCase}
        \inn{Dt_+^z}{\varphi(t)} = \inn{zt_+^{z-1}}{\varphi(t)}.
    \ee
    This yields the usual result for the derivative of $t_+^z$ in the distributional sense. This choice of test functions that vanish at zero may seem restrictive, but it will serve as an essential cornerstone for the forthcoming developments in later sections.
    \item[(ii)] In the broader context of distributions, the truncated power function $t_+^z$ can be analytically continued to hold for any complex power, except at non-positive integers where it exhibits poles. This is achieved by employing a method of regularization which allows us to extend the definition of the function into regions where the original integral representation may not converge.
    


The thusly normalized function $\frac{t_+^z}{\Gamma(z+1)}$ will be denoted by $k_z(t)$. From this point on, we refer to $k_z$ as the truncated complex power function. Note that
    \[
        D k_z(t) = \frac{zt_+^{z-1}}{\Gamma(z+1)}=k_{z-1}(t)
    \]
%
\end{enumerate}
\end{remark}

\begin{figure}[H]
    \centering
    \includegraphics[scale=0.4]{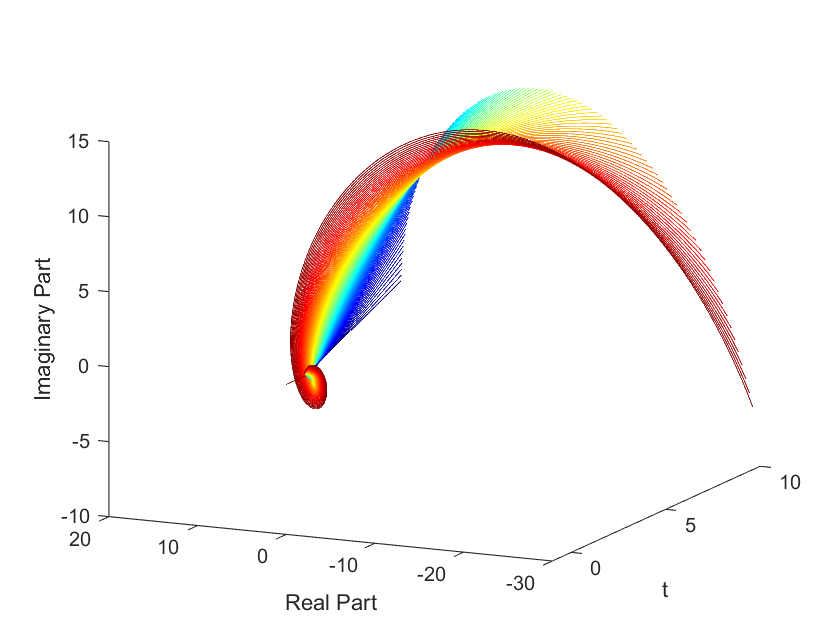}
    \caption{Normalized truncated complex power function with $z=1+i\gamma$ for $\gamma\in[0,2]$.}
    \label{NormalizedTruncatedComplexPower}
\end{figure}



Given that complex B-splines will be constructed using truncated complex power functions, much like the cardinal B-splines in the integer case, knowing the Fourier transform of these building blocks is a crucial step in characterizing the spectral properties of the complex B-splines and later the complex box splines themselves.


\begin{proposition}[Fourier Transform of Truncated Complex Powers \cite{Forster2005}]\label{Fouriercomplextruncated}
For $\Re(z)>-1$, $\Re(z)\notin\N$, and $\Im (z) \neq 0$, the Fourier transform of the truncated complex power function is given by
\be\label{Fouriercomplextruncatedeq}
    \inn{\wh{k_z}}{\varphi} = \inn{\frac{1}{(i\omega)^{z+1}}}{\varphi(\omega)}.
\ee
\end{proposition}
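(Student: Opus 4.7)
The plan is an Abelian regularization argument: multiply $k_z$ by a damping factor $e^{-\eps t}$, compute the Fourier transform classically, and pass to the limit $\eps \to 0^+$ in $\cS'(\R)$. Define $k_z^\eps(t) := \frac{t_+^z\, e^{-\eps t}}{\Gamma(z+1)}$. Since $\Re(z) > -1$, this belongs to $L^1(\R)$ for every $\eps > 0$, and the standard Gamma-integral identity, proved by the change of variable $u = (\eps + i\omega)t$ together with a Cauchy-theorem contour rotation back to the positive real axis (valid because $\Re(\eps + i\omega) > 0$), yields
\[
    \wh{k_z^\eps}(\omega) = \frac{1}{\Gamma(z+1)}\int_0^\infty t^z e^{-(\eps + i\omega)t}\,dt = \frac{1}{(\eps + i\omega)^{z+1}},
\]
with the principal branch of the complex power.

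Next I would verify that $k_z^\eps \to k_z$ in $\cS'(\R)$ as $\eps\to 0^+$. For $\varphi\in\cS(\R)$, the integrand in $\inn{k_z^\eps-k_z}{\varphi}$ is dominated in absolute value by $t^{\Re(z)}\abs{\varphi(t)}$, which is integrable on $(0,\infty)$ thanks to $\Re(z) > -1$ near the origin and the Schwartz decay of $\varphi$ at infinity. Dominated convergence gives the distributional limit, and continuity of the Fourier transform on $\cS'(\R)$ promotes it to $\wh{k_z^\eps}\to \wh{k_z}$ in $\cS'(\R)$. These two steps reduce the proposition to a question about the distributional limit of $(\eps + i\omega)^{-(z+1)}$.

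The main obstacle, and the point at which the hypotheses $\Re(z)\notin\N$ and $\Im(z)\neq 0$ are needed, is to identify $\lim_{\eps\to 0^+}(\eps+i\omega)^{-(z+1)}$ with the distribution $(i\omega)^{-(z+1)}$ appearing in \eqref{Fouriercomplextruncatedeq}. When $-1 < \Re(z) < 0$ the target is locally integrable at $\omega = 0$ (the singularity has order strictly less than one) and of slow growth, so a pointwise-plus-domination argument suffices. When $\Re(z)\geq 0$ one must interpret $(i\omega)^{-(z+1)}$ via the same analytic-continuation scheme as in Remark~\ref{ImportantRemark}(ii), and then identify the $\eps\to 0^+$ boundary value of the holomorphic family $\eps\mapsto(\eps+i\omega)^{-(z+1)}$ with that regularized distribution. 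The assumptions $\Re(z)\notin\N$ and $\Im(z)\neq 0$ keep $z$ away from the exceptional values at which the analytic family has poles, so no correction terms involving $\delta_0$ or its derivatives arise; once this boundary-value identification is rigorously justified, dividing by $\Gamma(z+1)$ and comparing limits yields \eqref{Fouriercomplextruncatedeq}.
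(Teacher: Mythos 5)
The paper itself offers no proof of this proposition; it is imported from \cite{Forster2005} as a quoted fact, so there is no in-text argument to measure yours against. Your Abelian regularization $k_z^\eps(t)=t_+^z e^{-\eps t}/\Gamma(z+1)$ is a standard and legitimate route, and your first two steps are correct and complete: the Gamma-integral evaluation $\wh{k_z^\eps}(\omega)=(\eps+i\omega)^{-(z+1)}$ is valid for $\Re(z)>-1$, and the dominated-convergence argument (with dominating function $t^{\Re(z)}\abs{\varphi(t)}/\abs{\Gamma(z+1)}$, using $\abs{e^{-\eps t}}\le 1$ on $t>0$) does give $k_z^\eps\to k_z$ and hence $\wh{k_z^\eps}\to\wh{k_z}$ in $\cS'(\R)$.

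The substantive remaining issue is your third step, which you correctly identify as the crux but do not carry out. For $\Re(z)\ge 0$ the function $\omega\mapsto(i\omega)^{-(z+1)}$ is not locally integrable at the origin, so the right-hand side of \eqref{Fouriercomplextruncatedeq} has no meaning until a specific distribution is designated; the proposition is only proved once one says which. If, consistently with Remark \ref{ImportantRemark}(ii), one \emph{defines} $(i\omega)^{-(z+1)}$ as the boundary value $\lim_{\eps\to 0^+}(\eps+i\omega)^{-(z+1)}=i^{-(z+1)}(\omega-i0)^{-(z+1)}$ in the Gelfand--Shilov sense, then your step 3 becomes tautological and the proof closes — but you must then still show that this limit exists in $\cS'(\R)$, e.g.\ by writing $(\eps+i\omega)^{-(z+1)}$ as an $N$-th $\omega$-derivative of $c_{z,N}\,(\eps+i\omega)^{N-z-1}$ with $N>\Re(z)$, so that the limit is taken on a locally integrable, locally uniformly dominated family. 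One small correction to your closing sentence: the boundary-value family $(\omega-i0)^{\lambda}$ is \emph{entire} in $\lambda$; the poles occur in the one-sided pieces $\omega_\pm^{\lambda}$ of its decomposition and cancel in the combination. The hypotheses $\Re(z)\notin\N$ and $\Im(z)\neq 0$ are therefore not guarding against poles of that family, but against the integer exponents $z+1\in\N\setminus\{0\}$ at which the identification of the boundary value with a naive power $(i\omega)^{-(z+1)}$ breaks down and finite-part plus $\delta$-derivative corrections would appear — which is the correct instinct in your final remark.
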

The next result whose proof can be found in \cite{FractionalEquations1999} plays an important role in the later investigations.
%
\begin{proposition}[Convolution of Truncated Complex Power Functions]\label{ConvolutionTruncated}
Let $z_1,z_2\in\C$ with $\Re(z_j)>-1$ for $j=1,2$. Then
\be\label{ConvolutionTruncatedEq}
    \inn{k_{z_1}\ast k_{z_2}}{\varphi} = \inn{k_{z_1+z_2+1}}{\varphi},
\ee
with $\varphi\in\cS(\R)$.
\end{proposition}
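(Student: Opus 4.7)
The plan is to compute the convolution directly in the time domain using locally integrable representatives, since for $\Re(z_j) > -1$ the functions $k_{z_j}$ are locally integrable and supported on $[0, \infty)$, so the pairing $\inn{k_{z_1}\ast k_{z_2}}{\varphi}$ can be written, via Fubini, as the ordinary integral of the pointwise convolution against $\varphi$. It therefore suffices to prove the pointwise identity $(k_{z_1}\ast k_{z_2})(t) = k_{z_1+z_2+1}(t)$ for a.e.\ $t\in\R$.

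First I would observe that, because both factors vanish for negative argument, for $t \leq 0$ the convolution integral is identically zero, matching $k_{z_1+z_2+1}(t) = 0$. For $t > 0$ the integral collapses to
\[
    (k_{z_1}\ast k_{z_2})(t) = \frac{1}{\Gamma(z_1+1)\Gamma(z_2+1)}\int_0^t (t-s)^{z_1} s^{z_2}\,ds,
\]
where the branch of the complex power is the one fixed by $t_+^z = e^{z\log t}$ on $(0,\infty)$.

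Next I would perform the substitution $s = tu$, which factors out the $t$ dependence and yields
\[
    (k_{z_1}\ast k_{z_2})(t) = \frac{t^{z_1+z_2+1}}{\Gamma(z_1+1)\Gamma(z_2+1)}\int_0^1 (1-u)^{z_1} u^{z_2}\,du.
\]
The remaining integral is the Euler Beta function $B(z_2+1, z_1+1)$. The hypotheses $\Re(z_j) > -1$ are precisely what is needed to guarantee absolute convergence at the endpoints $u=0$ and $u=1$, so the Beta integral is classically defined and equals $\Gamma(z_1+1)\Gamma(z_2+1)/\Gamma(z_1+z_2+2)$. Substituting cancels the prefactor and leaves $t^{z_1+z_2+1}/\Gamma(z_1+z_2+2) = k_{z_1+z_2+1}(t)$, as required.

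I do not expect any serious obstacle; the only subtlety is bookkeeping with the complex branch of $t^z$ and ensuring that the Fubini step is legitimate. For Fubini one checks that
\[
    \int_\R\int_\R \abs{k_{z_1}(t-s)}\abs{k_{z_2}(s)}\abs{\varphi(t)}\,ds\,dt < \infty,
\]
which follows since $\abs{k_z(t)} = t^{\Re(z)}/\abs{\Gamma(z+1)}$ on $(0,\infty)$ reduces the bound to the real case $\Re(z_j) > -1$, where local integrability together with Schwartz decay of $\varphi$ gives a finite double integral. An alternative, essentially equivalent, verification proceeds on the Fourier side via Proposition~\ref{Fouriercomplextruncated}: the product $(i\omega)^{-(z_1+1)}(i\omega)^{-(z_2+1)} = (i\omega)^{-(z_1+z_2+2)}$ is the Fourier transform of $k_{z_1+z_2+1}$, which confirms the result and can be invoked as a sanity check.
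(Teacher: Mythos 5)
Your proof is correct. Note that the paper itself offers no argument for this proposition — it simply defers to the reference \cite{FractionalEquations1999} — so there is no in-paper proof to compare against; your direct computation via the Euler Beta integral is the standard one and is exactly what such references contain. The substitution $s=tu$, the identification of $\int_0^1(1-u)^{z_1}u^{z_2}\,du$ with $B(z_2+1,z_1+1)=\Gamma(z_1+1)\Gamma(z_2+1)/\Gamma(z_1+z_2+2)$, and the observation that $\Re(z_j)>-1$ is exactly the condition for convergence at the endpoints are all right, and the branch issue is harmless because every base appearing in the powers ($t$, $u$, $1-u$) is a positive real. Your Fubini check is also the correct way to justify passing from the distributional pairing to the pointwise convolution, since both factors are locally integrable, of polynomial growth, and supported in $[0,\infty)$. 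One small caveat: the Fourier-side ``sanity check'' you mention is weaker than the time-domain argument, because Proposition~\ref{Fouriercomplextruncated} as stated requires $\Re(z)\notin\N$ and $\Im(z)\neq 0$, so it does not cover all $z_1,z_2$ admitted by the proposition; your Beta-function computation is the more robust route and should be regarded as the actual proof.
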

%

\subsection{Review of Complex B-Splines}

Complex B-splines were first introduced in \cite{Forster2005} and later investigated in a albeit incomplete series of papers \cite{FGMS,FM,Forster2010,MF}. The original definition was based on the Fourier transform but for our purposes we present in this section an alternate but equivalent definition via complex truncated power functions. Moreover, our definition employs a distributional setting.

\begin{definition}[Complex B-Splines]
Let $\Re(z)>-1$. The \textit{complex B-spline} $B_z$ of degree $z$ is defined as the tempered distribution given by 
\be\label{ComplexBSplinesEq}
    \inn{B_z}{\varphi} := \inn{\nabla^{z+1}k_z}{\varphi},
\ee
with $\varphi\in\cS(\R)$.
\end{definition}


In the next proposition, we list some properties of complex B-splines in the distributional setting. Proofs in the non-distributional setting can be found in \cite{Forster2005}; the extension to the distributional setting is based on the distributional definitions of the complex backward difference operator and complex truncated power function. The details are left to the reader.

\begin{proposition}[Properties of Complex B-Splines]
Complex B-splines enjoy the following properties:
\begin{enumerate}
    \item[(i)] The complex B-spline satisfies the convolution property
    \[
        \inn{B_{z_1}\ast B_{z_2}}{\varphi} = \inn{B_{z_1+z_2+1}}{\varphi},
    \]
    with $\Re(z_j)>-1$ for $j=1,2$.
    
    \item[(ii)] The complex B-spline satisfies the recurrence relation
    \[
        \inn{B_z}{\varphi} = \inn{\frac{t}{z}B_{z-1}+\frac{z+1-t}{z}\tau_1B_{z-1}}{\varphi},
    \]
    with $\Re(z)>0$.
    \item[(iii)] The Fourier transform of the complex B-spline $B_z$ is given by the tempered distribution
\be\label{Fouriercomplexbsplineeq}
    \inn{\wh{B_z}(\omega)}{\varphi(\omega)} = \inn{\prn{\frac{1-e^{-i\omega}}{i\omega}}^{z+1}}{\varphi(\omega)},
\ee
with $\varphi\in\cS(\R)$.
\item[(iv)] The Fourier transform $\wh{B_z}$ of a complex B-spline can be identified with an $L^2(\R)$ function if $\Re(z)>-\frac{1}{2}$. Thus, by Parseval's identity, $B_z\in L^2(\R)$ if $\Re(z)>-\frac{1}{2}$
\end{enumerate}
\end{proposition}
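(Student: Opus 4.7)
The plan is to treat the four items in order, each time reducing the claim to identities already at hand: the definition $B_z = \nabla^{z+1} k_z$, the convolution rules (\ref{ComplexBackwardConvolution}) and (\ref{ConvolutionTruncatedEq}), the product rule (\ref{ComplexBackwardtT}), and the Fourier identities (\ref{Fourierbackward}) and (\ref{Fouriercomplextruncatedeq}).

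For (i), I would simply chain (\ref{ComplexBackwardConvolution}) with (\ref{ConvolutionTruncatedEq}):
\[
    B_{z_1}\ast B_{z_2} = \nabla^{z_1+1} k_{z_1}\ast \nabla^{z_2+1} k_{z_2} = \nabla^{(z_1+1)+(z_2+1)}(k_{z_1}\ast k_{z_2}) = \nabla^{(z_1+z_2+1)+1} k_{z_1+z_2+1} = B_{z_1+z_2+1}.
\]
For (ii), the starting observation is the elementary pointwise identity $t\cdot k_{z-1}(t) = z\, k_z(t)$, which follows from $\Gamma(z+1) = z\,\Gamma(z)$. Hence $B_z = \tfrac{1}{z}\nabla^{z+1}(t k_{z-1})$, and (\ref{ComplexBackwardtT}) applied with $T=k_{z-1}$ gives
\[
    B_z = \tfrac{1}{z}\bigl[(z+1)\nabla^z k_{z-1} + (t-(z+1))\nabla^{z+1} k_{z-1}\bigr] = \tfrac{z+1}{z} B_{z-1} + \tfrac{t-(z+1)}{z}\nabla^{z+1} k_{z-1}.
\]
To turn $\nabla^{z+1} k_{z-1}$ into something involving $\tau_1 B_{z-1}$, I would exploit the Pascal-type identity $\binom{z+1}{k} = \binom{z}{k} + \binom{z}{k-1}$ and re-index the second series (legitimate by the absolute convergence in Proposition \ref{WellDefined}) to obtain the operator identity $\nabla^{z+1} = (I-\tau_1)\nabla^z$, whence $\nabla^{z+1} k_{z-1} = B_{z-1} - \tau_1 B_{z-1}$. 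Substituting and collecting coefficients of $B_{z-1}$ and $\tau_1 B_{z-1}$ produces the stated recurrence.

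For (iii), (\ref{Fourierbackward}) gives $\widehat{\nabla^{z+1} T} = \bigl(\sum_{k\geq 0}(-1)^k\binom{z+1}{k} e^{-ik\omega}\bigr)\widehat{T}$ as tempered distributions; by Proposition \ref{WellDefined} this series is absolutely convergent, and the generalized binomial theorem identifies its sum with $(1-e^{-i\omega})^{z+1}$. Specialising to $T = k_z$ and inserting $\widehat{k_z}(\omega) = (i\omega)^{-(z+1)}$ from (\ref{Fouriercomplextruncatedeq}) yields the formula in (iii). For (iv) I would then derive a pointwise estimate on the resulting symbol. Writing $1-e^{-i\omega} = 2i\sin(\omega/2)\, e^{-i\omega/2}$, the modulus $|(1-e^{-i\omega})^{z+1}|$ is bounded uniformly in $\omega$ up to a constant depending only on $\Im(z)$, while under the principal branch $|(i\omega)^{-(z+1)}|$ contributes exactly $|\omega|^{-\Re(z+1)}$ times a factor bounded by $\exp(\tfrac{\pi}{2}|\Im(z+1)|)$ (since $\arg(i\omega) = \pm \pi/2$). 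Consequently $|\widehat{B_z}(\omega)|^2 \leq C_z\, |\omega|^{-2\Re(z+1)}$ for $|\omega|\geq 1$, which is integrable at infinity precisely when $2\Re(z+1) > 1$, i.e.\ $\Re(z) > -\tfrac12$; near $\omega = 0$ the transform tends to $1$, so no contribution arises there. Plancherel's theorem then places $B_z$ itself in $L^2(\R)$.

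The main obstacle, in my view, is not any single calculation but the careful branch-handling of complex powers: making sure that the binomial-series identification in (iii) and the $|\cdot|$-estimate in (iv) use the same branch conventions as in (\ref{Fouriercomplextruncatedeq}), and that the re-indexing underlying the operator identity $\nabla^{z+1} = (I-\tau_1)\nabla^z$ is justified as an honest equality of tempered distributions via the absolute convergence of Proposition \ref{WellDefined} rather than treated as a merely formal manipulation.
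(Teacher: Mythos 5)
Your proposal is correct, but it is worth noting that the paper itself offers no proof of this proposition: it cites \cite{Forster2005} for the non-distributional setting and explicitly leaves the distributional extension ``to the reader.'' What you have supplied is therefore a genuine, self-contained derivation assembled entirely from the paper's own lemmas, and each step checks out: (i) is the natural two-line chain through \eqref{ComplexBackwardConvolution} and \eqref{ConvolutionTruncatedEq}; in (ii) the identity $t\,k_{z-1}(t)=z\,k_z(t)$, the product rule \eqref{ComplexBackwardtT} with $T=k_{z-1}$ (legitimate since $\Re(z)>0$ keeps $k_{z-1}$ a tempered distribution), and the Pascal-type factorization $\nabla^{z+1}=(I-\tau_1)\nabla^{z}$ combine to give exactly the stated coefficients after collecting terms; and (iii)--(iv) follow from \eqref{Fourierbackward}, \eqref{Fouriercomplextruncatedeq}, the binomial series (absolutely convergent on $|x|=1$ because $\Re(z+1)>0$), and the modulus estimate $|\wh{B_z}(\omega)|\le C_z|\omega|^{-\Re(z)-1}$. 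The one point you rightly flag --- that $(1-e^{-i\omega})^{z+1}(i\omega)^{-(z+1)}$ and $\bigl(\frac{1-e^{-i\omega}}{i\omega}\bigr)^{z+1}$ could a priori differ by a factor $e^{2\pi i k(z+1)}$ --- is resolved by observing that $\Omega(\omega)=\frac{1-e^{-i\omega}}{i\omega}$ has nonnegative real part, so the principal branch is consistent throughout; this is the same convention the paper adopts later when it restricts $\arg\Omega_j$ to $[-\pi,\pi)$ for the box-spline Fourier transform. Your version is arguably more useful than the paper's deferral, since it makes visible exactly which of the paper's stated operator identities carry the weight.
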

Note, that the spectrum of a complex B-spline $\wh{B_z}$ consists of the spectrum of a so-called fractional B-spline \cite{Unser2000}, combined with a modulation and a damping factor:
\be\label{spec}
{\widehat{B_z}(\omega)} = {\widehat{B_{\re z}}(\omega)}\, e^{i \im z \ln \Omega (\omega)} \, e^{- \im z \arg \Omega(\omega)}.
\ee
where $\Omega(\omega) := \frac{1-e^{-i\omega}}{i\omega}$.

For further properties of complex B-splines, we refer the interested reader to the albeit incomplete list of papers given at the beginning of this subsection.
\section{Box Splines}

For the purposes of notation and terminology, we present a brief introduction to box splines in this section. For more details, we refer the interested reader to, for instance, \cite{Chui1988,deBoor1993} and the references given therein.

%
%
%

\begin{definition}[Direction Set]
Let $n\in\N$. A direction set $M_{n+1}$ is a $d\times (n+1)$ matrix with columns $(m_0,m_1, \ldots, m_n)$, all in $\R^d\backslash 0$, such that
\[
    \Span(M_{n+1})=\R^d.
\]
\end{definition}

Direction sets are used to define box splines.
\begin{definition}[Box Spline]\label{boxspline}
Let $M = [m_0, m_1, \ldots, m_n]$ be a matrix whose columns $m_j$ form a direction set. The box spline $B_M$ associated with the matrix $M$ is defined to be the tempered distribution
\begin{align*}
    B_M : \cS(\R^d) &\rightarrow \R \\
    \varphi &\mapsto \inn{B_M}{\varphi} \coloneqq \int_{[0,1)^{n+1}} \varphi(Mt)dt.
\end{align*}
\end{definition}

The definition of the box spline $B_M$ can be understood geometrically as a mapping that measures the 'average' value of $\varphi$ over the parallelepiped defined by the direction set. This is a natural extension of the univariate case, where the B-spline measures the average value of a test function over an interval.

In \cite{Xu2009}, a generalization of box spline was presented by including a function $f$ that will serve as a weight. These so-called {F-box splines} $B_f(\cdot|M)$ are tempered distributions defined by
\begin{align}\label{FBoxSplines}
\begin{split}
    B_f(\cdot|M): \cS(\R^d) &\rightarrow \R \\
    \varphi &\mapsto \inn{B_f(\cdot|M)}{\varphi} \coloneqq \int_{\R^d}B_f(x|M)\varphi(x)dx = \int_{[0,1)^{n+1}}f(t)\varphi(Mt)dt.
\end{split}
\end{align}


This generalization allows us to consider a wider class of functions as weights, thereby enriching the structure and properties of the resulting splines. One particularly interesting and important class of functions that we can consider is the class of complex-valued functions. By choosing a specific weight $f$, we can define {complex box splines}, which is the main focus of this paper.

Multivariate truncated powers \cite{deBoor1993} are defined within an affine cone
with the assumption that the first non-zero component of each vector in the direction set is positive. 

\begin{definition}[Multivariate Truncated Power]
Let $M$ be an $d\times (n+1)$ matrix whose columns form a direction set. Multivariate truncated powers $T_M$ associated with $M$ are the tempered distributions given by
\begin{align}\label{MultivariateTruncated}
\begin{split}
    T_M: \cS(\R^d) &\rightarrow \R \\
    \varphi &\mapsto \inn{T_M}{\varphi} \coloneqq \int_{\R_+^{n+1}}\varphi(Mt)dt.
\end{split}
\end{align}
\end{definition}

By a weight $f$, we obtain {multivariate F-truncated powers} $T_f(\cdot|M)$:
\begin{align}\label{MultivariateFTruncated}
\begin{split}
    T_f(\cdot|M): \cS(\R^d) &\rightarrow \R \\
    \varphi &\mapsto \inn{T_f(\cdot|M)}{\varphi} \coloneqq \int_{\R^d}T_f(x|M)\varphi(x)dx = \int_{\R_+^{n+1}}f(t)\varphi(Mt)dt.
\end{split}
\end{align}

The multivariate truncated power is closely related to the box spline, as the following theorem suggests.
\begin{theorem}[Box Spline Representation \cite{deBoor1993}]
The box spline $B_M$ can also be defined as 
\be\label{boxsplinetruncated}
    \inn{B_M}{\varphi} = \inn{\prod_{j=0}^n\nabla_{m_j}T_M}{\varphi},
\ee
where $\nabla_{m_j}$ denotes the multivariate backward difference operator with respect to the vector $m_j$ and is defined as
\[
    \inn{\nabla_{m_j}T_M}{\varphi} = \inn{T_M-\tau_{m_j}T_M}{\varphi},
\]
where $\varphi\in\cS(\R^d)$.
\end{theorem}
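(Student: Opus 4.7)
The plan is to peel the product of difference operators off one factor at a time and track how each application of $\nabla_{m_j}$ converts the constraint $t_j \geq 0$ in the defining integral of $T_M$ into the constraint $0 \leq t_j < 1$. After all $n+1$ factors have been consumed, the domain of integration shrinks from $\R_+^{n+1}$ to $[0,1)^{n+1}$, which is exactly the domain defining $B_M$. Since the translation operators $\tau_{m_j}$ commute pairwise, so do the difference operators $\nabla_{m_j} = I - \tau_{m_j}$, and the order in which the peeling is performed is immaterial.

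First, I would establish the single-direction identity. Using the translation convention $\inn{\tau_h T}{\varphi} = \inn{T}{\varphi(\cdot + h)}$, one writes
\[
    \inn{\nabla_{m_j}T_M}{\varphi} = \int_{\R_+^{n+1}} \varphi(Mt)\,dt - \int_{\R_+^{n+1}} \varphi(Mt + m_j)\,dt.
\]
In the second integral, perform the substitution $s = t + e_j$, where $e_j$ is the $j$-th standard basis vector of $\R^{n+1}$; since $M e_j = m_j$, the integrand becomes $\varphi(Ms)$ and the domain becomes $\{s \in \R_+^{n+1} : s_j \geq 1\}$. Subtracting yields
\[
    \inn{\nabla_{m_j}T_M}{\varphi} = \int_{\{t \in \R_+^{n+1} : 0 \leq t_j < 1\}} \varphi(Mt)\,dt.
\]

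Next, I would iterate. Applying a further operator $\nabla_{m_k}$ with $k \neq j$ to the distribution produced in the previous step is carried out by the same shift $s = t + e_k$; this substitution acts only on the $k$-th coordinate and so preserves the condition $0 \leq s_j < 1$. Subtracting as before adds the constraint $0 \leq t_k < 1$. By induction on the number of directions consumed, after all $n+1$ operators have been applied one obtains
\[
    \inn{\prod_{j=0}^n \nabla_{m_j} T_M}{\varphi} = \int_{[0,1)^{n+1}} \varphi(Mt)\,dt = \inn{B_M}{\varphi}.
\]

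The main obstacle is really only bookkeeping. Two points deserve care: first, matching the translation convention so that the sign in the single-direction identity comes out correctly, and second, justifying Fubini and the linear change of variables at each stage. Absolute convergence throughout follows from the rapid decay of $\varphi \in \cS(\R^d)$ combined with the standing assumption that the first nonzero entry of each $m_j$ is positive, which forces $|Mt| \to \infty$ as $|t| \to \infty$ over any translate of the positive orthant in $\R^{n+1}$.
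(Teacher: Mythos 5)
Your argument is correct. The paper itself does not prove this theorem --- it is quoted from \cite{deBoor1993} --- so there is no internal proof to compare against, but your telescoping computation is precisely the standard argument from that reference: each factor $\nabla_{m_j}=I-\tau_{m_j}$ subtracts, via the substitution $s=t+e_j$ with $Me_j=m_j$, the integral over the shifted orthant $\{s_j\geq 1\}$ from the integral over $\{s_j\geq 0\}$, leaving the half-open slab $0\leq t_j<1$, and iterating over all $n+1$ directions cuts $\R_+^{n+1}$ down to $[0,1)^{n+1}$. Your translation convention matches the one fixed in the paper's Definition of the complex backward difference operator, and your justification of absolute convergence is the right one: the standing assumption that the first nonzero entry of each $m_j$ is positive makes the cone $M(\R_+^{n+1})$ pointed, so $|Mt|\geq c|t|$ on the positive orthant and the rapid decay of $\varphi\in\cS(\R^d)$ gives integrability of $\varphi(Mt)$ over every translate of $\R_+^{n+1}$. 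No gaps.
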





Finally, we list some properties of box splines. Proofs can be found in \cite{deBoor1993}.

\begin{proposition}[Properties of Box Splines]\label{Propertiesboxspline}
Box splines have the following properties:
\begin{enumerate}
    \item[(i)] $\Supp(B_M)=M([0,1)^{n+1})$;
    \item[(ii)] Recurrence relation:
    \[
        \inn{B_{M}}{\varphi} = \inn{\int_{0}^{1}B_{M\setminus m_n}(\cdot-tm_n)dt}{\varphi};
    \]
    \item[(iii)] If $M$ is invertible (i.e. the direction set is a basis of $\R^d$), then $B_M$ can be associated with the normalized characteristic function of the parallelepiped $M([0,1)^{n+1})$, that it
    \[
        B_M(t) = \frac{1}{|\det(M)|}\chi_{M([0,1)^{n+1})}(t);
    \]
    \item[(iv)] Fourier transform:
    \[
        \inn{\wh{B_M}}{\varphi} = \inn{\prod_{j=0}^n\frac{1-\exp(-i\omega\cdot x_j)}{i\omega \cdot x_j}}{\varphi},
    \]
    with $\varphi\in\cS(\R^d)$.
    \item[(v)] The convolution of two box splines yields again a box spline:
    \[
        \inn{B_M\ast B_N}{\varphi} = \inn{B_{M\cup N}}{\varphi},
    \]
    where $M\cup N$ is the matrix made up of the columns of $M$ and $N$.
\end{enumerate}
\end{proposition}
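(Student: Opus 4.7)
The plan is to prove the five properties by direct manipulation of the defining integral from Definition \ref{boxspline}, since each of them reduces to a standard change of variable or Fubini argument once one unpacks the definition. Because the direction set spans $\R^d$ and the integration domain $[0,1)^{n+1}$ is compact, the box spline $B_M$ is in fact representable by a bounded, compactly supported function, so we can freely move between the distributional pairing and ordinary integration against test functions.

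First I would dispatch (i) immediately: if $\varphi \in \cS(\R^d)$ vanishes on the parallelepiped $M([0,1)^{n+1})$, then $\varphi(Mt)=0$ for every $t\in[0,1)^{n+1}$, so $\inn{B_M}{\varphi}=0$, and conversely one exhibits suitable bump functions to show the support cannot be smaller. For (ii) I would split $t=(t',t_n)$ with $t'\in[0,1)^n$ and $t_n\in[0,1)$, write $Mt = (M\setminus m_n)t' + t_n m_n$, and apply Fubini: the inner integral over $t'$ is, by Definition \ref{boxspline} applied to the translated test function $\varphi(\,\cdot + t_n m_n)$, exactly $\inn{B_{M\setminus m_n}(\cdot - t_n m_n)}{\varphi}$, which after exchanging the pairing with the outer $t_n$-integral gives the stated recurrence. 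For (iii), under the hypothesis that $M$ is square and invertible the substitution $y = Mt$ with Jacobian $|\det M|$ turns the defining integral into $\int_{\R^d} \varphi(y)\,|\det M|^{-1}\chi_{M([0,1)^d)}(y)\,dy$, from which the claimed identification follows.

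For (iv) I would compute the Fourier transform directly against a Schwartz test function. Writing
\[
\inn{\wh{B_M}}{\varphi} = \inn{B_M}{\wh{\varphi}} = \int_{[0,1)^{n+1}} \wh{\varphi}(Mt)\,dt,
\]
expanding $\wh{\varphi}(Mt) = \int_{\R^d} \varphi(\omega)\,e^{-i\omega\cdot Mt}\,d\omega$, and exchanging the order of integration (justified by Fubini on the compact cube) reduces the $t$-integral to a product of one-dimensional integrals $\int_0^1 e^{-it_j (\omega\cdot m_j)}\,dt_j = (1-e^{-i\omega\cdot m_j})/(i\omega\cdot m_j)$, yielding the stated formula. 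Property (v) then follows by combining the convolution theorem for tempered distributions with (iv): the Fourier transform of $B_M\ast B_N$ is the product of the Fourier transforms, which collapses to the single product indexed by the columns of $M\cup N$, and injectivity of the distributional Fourier transform delivers the identity.

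The only step that requires genuine care, rather than bookkeeping, is (iv), where one must justify the exchange of the $t$-integral with the tempered-distribution pairing defining $\wh{\varphi}$; the compactness of $[0,1)^{n+1}$ and the Schwartz decay of $\varphi$ make the iterated integral absolutely convergent, but this verification is the one place where the distributional framework actually contributes work beyond symbolic manipulation. Everything else reduces to Fubini, a linear change of variable, or invocation of (iv) as a black box.
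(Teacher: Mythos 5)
Your proposal is essentially correct, but note that the paper offers no proof of this proposition at all: it simply defers to the reference \cite{deBoor1993}, so there is no in-paper argument to compare against. The computations you sketch --- Fubini for the recurrence, the change of variables $y=Mt$ for the invertible case, the factorization of $\int_{[0,1)^{n+1}}e^{-i\omega\cdot Mt}\,dt$ into the product of one-dimensional integrals for the Fourier transform, and the convolution theorem plus injectivity of the Fourier transform for (v) --- are exactly the standard ones in that reference, and they go through. Two small caveats are worth recording. First, the support of a distribution is by definition closed, so the correct statement of (i) is $\Supp(B_M)=M([0,1]^{n+1})$; your argument actually establishes this closed version, and the half-open set in the statement is an imprecision of the paper, not of your proof. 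Second, in (ii) the matrix $M\setminus m_n$ need not span $\R^d$ (e.g.\ repeated columns), in which case $B_{M\setminus m_n}$ is only a distribution supported on a proper subspace rather than a function; your Fubini computation still yields the identity at the level of pairings, but the pointwise notation $B_{M\setminus m_n}(\cdot-tm_n)$ then has to be read distributionally. Also, the $x_j$ appearing in the paper's statement of (iv) should be $m_j$, as your derivation correctly produces.
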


%
\section{Complex Box Splines}


Complex box splines are a new tool in spline theory, merging the complex degrees of B-splines with the multivariate flexibility of traditional box splines. They are a novel extension of traditional box splines and are characterized by their complex degrees, thus offering a unique perspective on the interplay between geometry and analysis, particularly in the context of multivariate functions and introducing a new multidimensional complex transform.


\subsection{Multivariate Truncated Complex Powers}



We begin by introducing multivariate truncated complex power functions.

\begin{definition}[Multivariate Truncated Complex Power Function]\label{MultivariateComplexPower}
Let $M$ be an $d\times (n+1)$ matrix whose columns form a direction set. Set $\zvec \coloneqq (z_0,z_1,\ldots,z_n)\in\C^{n+1}$ where $\Re(z_j)>-1$ for all $j\in\N_n$ and let
\[
    k_\zvec(t) \coloneqq \frac{t_+^\zvec}{\Gamma(\zvec+1)} = \prod_{j=0}^n \frac{(t_j)_+^{z_j}}{\Gamma(z_j+1)}.
\]
The tempered distribution $\cT_{\zvec}(\cdot|M) \coloneqq T_{k_\zvec}(\cdot|M)$, \begin{align*}
    \cT_{\zvec}(\cdot|M): \cS(\R^d) &\rightarrow \R \\
    \varphi &\mapsto \inn{\cT_{\zvec}(\cdot|M)}{\varphi} \coloneqq \int_{\R^d}\cT_{\zvec}(x|M)\varphi(x)dx = \int_{\R_+^{n+1}}k_\zvec(t)\varphi(Mt)dt,
\end{align*}
is called a multivariate truncated complex power (function).
\end{definition}

The next result examines the rate of change of a multivariate complex truncated power distribution in the specific direction defined by an element in the direction set.

\begin{theorem}[Weak Derivative of Multivariate Truncated Complex Powers]
If $\Re(z_j)>0$ for all $j\in\N_n$, then 
\[
    \inn{D_{m_j}\cT_{\zvec}(\cdot|M)}{\varphi} = \inn{\cT_{\zvec-e_j}(\cdot|M)}{\varphi},
\]
where $e_j$ denotes one of the canonical basis vector in $\R^{n+1}$.
\end{theorem}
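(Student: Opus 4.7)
The plan is to start from the distributional definition of the directional derivative $D_{m_j}$, namely
\[
\inn{D_{m_j}\cT_{\zvec}(\cdot|M)}{\varphi} = -\inn{\cT_{\zvec}(\cdot|M)}{D_{m_j}\varphi} = -\int_{\R_+^{n+1}} k_\zvec(t)\,(D_{m_j}\varphi)(Mt)\,dt,
\]
and then shift the derivative from the test function $\varphi$ onto the factor $k_{z_j}$ by an integration by parts in the single variable $t_j$.

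The first key observation is the chain-rule identity $(D_{m_j}\varphi)(Mt) = \frac{\partial}{\partial t_j}\bigl[\varphi(Mt)\bigr]$, which follows from $Mt = \sum_i t_i m_i$. Next, I would factor $k_\zvec(t) = k_{z_j}(t_j)\prod_{i\ne j} k_{z_i}(t_i)$ and apply Fubini's theorem to isolate the $t_j$-integral. The inner integral becomes
\[
\int_0^\infty k_{z_j}(t_j)\,\tfrac{\partial}{\partial t_j}\varphi(Mt)\,dt_j = \bigl[k_{z_j}(t_j)\varphi(Mt)\bigr]_{t_j=0}^{t_j=\infty} - \int_0^\infty k_{z_j-1}(t_j)\,\varphi(Mt)\,dt_j,
\]
where I have used the identity $D k_{z_j} = k_{z_j-1}$ established in Remark \ref{ImportantRemark}(ii). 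Putting this back into the outer integral gives exactly $\inn{\cT_{\zvec-e_j}(\cdot|M)}{\varphi}$, the desired right-hand side; the overall sign works out because of the minus sign from the distributional derivative.

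The technical obstacle is thus the vanishing of the boundary terms and the validity of the integration by parts. At $t_j = 0^+$, one uses that $\Re(z_j) > 0$ forces $k_{z_j}(t_j) = (t_j)^{z_j}/\Gamma(z_j+1) \to 0$; this is where the hypothesis $\Re(z_j) > 0$ is genuinely needed, since for $-1 < \Re(z_j) \le 0$ one would have to fall back on the regularized distributional derivative of Proposition \ref{ComplexTruncatedWeakDerivative}. At $t_j = \infty$, since $m_j \ne 0$ and the other coordinates are fixed, $|Mt| \to \infty$ in the direction $m_j$, and the Schwartz decay of $\varphi$ dominates the polynomial growth $|k_{z_j}(t_j)| \lesssim t_j^{\Re(z_j)}$. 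The hypothesis $\Re(z_j-1) > -1$, guaranteed by $\Re(z_j) > 0$, also ensures that $k_{z_j-1}$ is locally integrable at the origin, so the right-hand side $\cT_{\zvec-e_j}(\cdot|M)$ is itself a well-defined tempered distribution via Definition \ref{MultivariateComplexPower}.

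Finally, since the argument above produces the stated identity for every $\varphi \in \cS(\R^d)$, the equality of distributions follows. A brief remark might be added that Fubini is applicable because $k_\zvec(t)\varphi(Mt)$ is absolutely integrable on $\R_+^{n+1}$: the Schwartz decay of $\varphi$ handles the unbounded directions in $\Ran M$, and the constraint $\Re(z_i) > -1$ ensures local integrability of each factor $k_{z_i}$ near the origin.
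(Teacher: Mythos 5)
Your proposal is correct and follows essentially the same route as the paper's own proof: pass the directional derivative to the test function, use the chain-rule identity $(D_{m_j}\varphi)(Mt)=\partial_{t_j}[\varphi(Mt)]$, integrate by parts in $t_j$, and invoke $Dk_{z_j}=k_{z_j-1}$ to land on $\cT_{\zvec-e_j}(\cdot|M)$. Your treatment is in fact slightly more careful than the paper's, which performs the integration by parts silently; your explicit verification that the boundary term at $t_j=0$ vanishes precisely because $\Re(z_j)>0$, and that Schwartz decay kills the term at infinity, is exactly the justification the paper leaves implicit.
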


\begin{proof}
Let $\varphi\in\cS(\R^d)$. Then,
\begin{align*}
    \inn{D_{m_j}\mathcal{T}_{\zvec}(x|M)}{\varphi(x)} &= -\int_{\R^s}T_{k_\zvec}(x|M)D_{m_j}\varphi(x)dx \\
    &= -\int_{\R_+^{n+1}}k_\zvec(t)\frac{\p \varphi(Mt)}{\p u_j}du \\
    &= \int_{\R_+^{n+1}}\frac{\p}{\p t_j}k_\zvec(t)\varphi(Mt)du\\
    &= \int_{\R_+^{n+1}}k_{\zvec-e_j}(t)\varphi(Mt)dt \\
    &= \int_{\R^d}T_{k_{\zvec-e_j}}(x|M)\varphi(x)dx\\
    &=\inn{\cT_{\zvec-e_j}(x|M)}{\varphi(x)},
\end{align*}
where we used
\begin{align*}
    \frac{\p}{\p t_j}k_\zvec(t) &=\prn{\frac{\p}{\p t_j}k_{z_j}(t_j)}\prod_{i\neq j}^n\frac{(t_i)_+^{z_i}}{\Gamma(z_i+1)} =\prn{k_{z_j-1}(t_j)}\prod_{i\neq j}^n\frac{(t_i)_+^{z_i}}{\Gamma(z_i+1)} =k_{\zvec-e_j}(t).\qedhere
\end{align*}
\end{proof}

\begin{theorem}[Mixed Weak Derivative of Multivariate Truncated Complex Powers]\label{MixedTruncated}
If $\Re(z_j)>0$ for all $j\in\N_n$, then 
\[
    \inn{D_{M}\cT_{\zvec}(\cdot|M)}{\varphi} = \inn{\cT_{\zvec-1}(\cdot|M)}{\varphi},
\]
where $\varphi\in\cS(\R^d)$. In this context, $D_M$ denotes differentiation with respect to all directions specified by the columns of the matrix $M$ simultaneously, and $\zvec-1$ denotes the vector obtained by subtracting 1 from each component of $\zvec$.
\end{theorem}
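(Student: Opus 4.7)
The plan is to iterate the preceding theorem once for each column of $M$. Reading $D_M$ as the composition $D_{m_0} D_{m_1} \cdots D_{m_n}$, a single application strips one standard basis vector from $\zvec$: first we obtain $\cT_{\zvec - e_n}(\cdot|M)$, then $\cT_{\zvec - e_{n-1} - e_n}(\cdot|M)$, and so on. After $n+1$ steps the result is $\cT_{\zvec - (e_0 + e_1 + \cdots + e_n)}(\cdot|M) = \cT_{\zvec - 1}(\cdot|M)$, which is exactly the claim.

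The subtle point is that the preceding theorem is stated with the blanket hypothesis $\Re(z_j) > 0$ for all $j$, whereas after the first application the $n$-th exponent has already dropped to $z_n - 1$. Inspecting that proof, however, one sees that positivity is used only in the coordinate being integrated by parts, to guarantee $k_{z_j}(0) = 0$ and so kill the boundary term at zero; the other exponents enter only through local integrability, which requires merely $\Re(z_i) > -1$. Since at each step we differentiate in a column $m_j$ whose exponent has not yet been touched, the essential hypothesis remains intact throughout the iteration.

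Alternatively, one can collapse everything into a single calculation. Push $D_M$ across the pairing at the cost of $(-1)^{n+1}$, unfold the definition of $\cT_{\zvec}(\cdot|M)$, and use the chain rule to rewrite $D_{m_j}$ acting on $x \mapsto \varphi(x)$ as $\partial_{t_j}$ acting on $t \mapsto \varphi(Mt)$. An $(n+1)$-fold integration by parts on $\R_+^{n+1}$ then transfers all derivatives onto $k_{\zvec}$, and the factorization $k_{\zvec}(t) = \prod_{j} k_{z_j}(t_j)$ together with $\partial_{t_j} k_{z_j} = k_{z_j - 1}$ from Remark~\ref{ImportantRemark} produces the integrand $k_{\zvec - 1}(t) \varphi(Mt)$, which is $\inn{\cT_{\zvec - 1}(\cdot|M)}{\varphi}$.

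Either route reduces to controlling the boundary contributions in the integration by parts, and this is where the main (minor) obstacle lies. At $t_j = 0$ they vanish because $k_{z_j}(0) = 0$ when $\Re(z_j) > 0$; this is precisely where the hypothesis is consumed. At infinity they vanish because the standing cone assumption on the direction set, namely that the first nonzero component of each $m_j$ is positive, forces $|Mt| \to \infty$ as $\|t\| \to \infty$ within $\R_+^{n+1}$, so the rapid decay of $\varphi \in \cS(\R^d)$ dominates the polynomial growth of $k_{\zvec}(t)$. Once these endpoint contributions are dispensed with, the rest of the computation is mechanical.
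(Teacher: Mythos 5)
Your proof is correct and follows essentially the same route as the paper, whose entire argument is the one-line observation that $\frac{\p^{n+1}}{\p t_0\p t_1\cdots\p t_n}k_\zvec(t)=k_{\zvec-1}(t)$ (each factor depending on a different variable) combined with the integration-by-parts computation of the preceding single-direction theorem; your second, collapsed calculation is exactly this, and your iterative first route is a trivially equivalent variant. Your added care about where the hypothesis $\Re(z_j)>0$ is actually consumed and about the boundary terms at $0$ and at infinity (via the cone condition on the direction set) goes beyond what the paper records, but does not change the approach.
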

\begin{proof}
The proof follows the same lines as the previous one by noticing that
\[
    \frac{\p^{n+1}}{\p t_0\p t_1\ldots\p t_n}k_\zvec(t)=k_{\zvec-1}(t),
\]
as each factor depends on a different variable.
\end{proof}

\begin{remark}
Note that in general mixed partial derivatives may not commute for distributions. In our case, because the distribution arises from a product of truncated complex power functions, the order of differentiation does not change the result, and we can talk about ``the" mixed derivative without ambiguity.
\end{remark}

This concept of mixed partial derivative allows us to generalize the observation that we made in {Proposition \ref{ComplexTruncatedWeakDerivative}} to the multivariate complex truncated power distribution.
\begin{proposition}\label{importantequation}
If $-1<\Re(z_j)<0$, for all $j\in\N_n$, then 
\[
    \inn{D_{M_n}\cT_{\zvec}(\cdot|M)}{\varphi} = \inn{\cT_{\zvec-1}(\cdot|M)}{\varphi(\cdot)-\varphi(0)},
\]
where $\varphi\in\cS(\R^d)$.
\end{proposition}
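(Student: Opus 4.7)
The plan is to reduce the statement to $n+1$ iterated applications of the one-dimensional Proposition~\ref{ComplexTruncatedWeakDerivative}, exploiting the product factorization $k_{\zvec}(t)=\prod_{j=0}^n k_{z_j}(t_j)$ and the chain-rule identity $\partial_{t_j}\varphi(Mt)=(D_{m_j}\varphi)(Mt)$ that drives the proof of Theorem~\ref{MixedTruncated}. Conceptually, this proposition stands to Proposition~\ref{ComplexTruncatedWeakDerivative} in exactly the relation that Theorem~\ref{MixedTruncated} stands to the smooth-regime identity $Dk_z=k_{z-1}$ of Remark~\ref{ImportantRemark}(ii).

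Starting from the distributional definition
\[
\inn{D_{M_n}\cT_{\zvec}(\cdot|M)}{\varphi}=(-1)^{n+1}\int_{\R_+^{n+1}}\prod_{j=0}^n k_{z_j}(t_j)\,\frac{\partial^{n+1}\varphi(Mt)}{\partial t_0\cdots\partial t_n}\,dt,
\]
I would integrate by parts in the parameters $t_0,\dots,t_n$ in turn by Fubini. At the $j$-th step the inner function $\prod_{i\neq j}\partial_{t_i}\varphi(Mt)$ is Schwartz in $t_j$ with the other coordinates held fixed, so Proposition~\ref{ComplexTruncatedWeakDerivative} converts $k_{z_j}(t_j)$ into $k_{z_j-1}(t_j)$ and subtracts the $t_j=0$ trace of that inner function, up to a sign. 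After all $n+1$ steps the accumulated $(-1)^{n+1}$ from the IBP cancels the prefactor, the kernel has become $k_{\zvec-1}$, and what remains is to show that the successive trace subtractions collapse, in the regularized sense of Remark~\ref{ImportantRemark}(ii), to the single subtraction $\varphi(0)$ on the right-hand side. The base case $n=0$ is precisely Proposition~\ref{ComplexTruncatedWeakDerivative} pushed along the direction $m_0$, and induction on $n$ delivers the general case.

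The principal obstacle, and what distinguishes this from Theorem~\ref{MixedTruncated}, is that here $\Re(z_j-1)\in(-2,-1)$ so the kernels $k_{z_j-1}$ are not locally integrable at the origin and the integrals above are meaningful only under the analytic-continuation regularization of Remark~\ref{ImportantRemark}(ii). My plan to make the iterated IBP rigorous is to introduce a cut-off $k_{z_j}^{(\eps)}(t_j):=k_{z_j}(t_j)\mathbf{1}_{\{t_j\ge\eps\}}$, for which all of the above manipulations are classical, and then to pass to $\eps\to 0^+$. The subtraction $\varphi-\varphi(0)$ furnishes an extra factor $|Mt|=O(|t|)$ near the origin that, combined across the $n+1$ parameters, compensates the $\prod_j t_j^{\Re(z_j)-1}$ singularity produced by iteration; the intermediate boundary contributions factor through integrals of the form $\int_\eps^\infty k_{z_j-1}(t_j)\,dt_j$ whose finite parts vanish in the same regularization, so that only the top-level and constant traces survive and combine into the stated $\varphi(Mt)-\varphi(0)$.
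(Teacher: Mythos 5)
Your overall route coincides with the paper's: the paper's proof is a two‑line remark that one should ``use the technique of the univariate case $n+1$ times'' on the tensor factors of $k_{\zvec}$, which is exactly your iterated application of the univariate formula \eqref{ComplexTruncatedWeakDerivative} via Fubini. The difference is that you try to spell out the step the paper skips, and that is precisely where your argument has a gap. Iterating the univariate subtraction does \emph{not} produce the single difference $\varphi(M\cdot)-\varphi(0)$; it produces the full tensor-product alternating difference. Already for $n=1$ the two integrations by parts yield
\[
\int_{\R_+^{2}}k_{\zvec-1}(t)\bigl[\varphi(Mt)-\varphi(M(0,t_1))-\varphi(M(t_0,0))+\varphi(0)\bigr]\,dt ,
\]
and it is this fourfold difference, vanishing to first order on \emph{each} coordinate hyperplane, that makes the integral against the non--locally integrable kernel $k_{\zvec-1}$ absolutely convergent. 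Your claim that subtracting $\varphi(0)$ alone supplies an $O(|t|)$ factor compensating $\prod_j t_j^{\Re(z_j)-1}$ fails: on the slab where $t_0<\delta$ and $t_1$ is bounded away from $0$, the integrand of the stated right-hand side behaves like $t_0^{\Re(z_0)-1}\cdot O(1)$ with $\Re(z_0)-1<-1$, so for $n\ge 1$ the pairing $\inn{\cT_{\zvec-1}(\cdot|M)}{\varphi-\varphi(0)}$ is not an absolutely convergent integral at all and must itself be read in a regularized sense.

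Everything therefore hinges on your final assertion that the intermediate traces ``vanish in the regularization,'' which is the actual content of the proposition and is asserted rather than proved. It can be made to work: the discrepancy between the iterated difference and $\varphi(M\cdot)-\varphi(0)$ factors through the scalars $\int_{\eps}^{\infty}k_{z_j-1}(t_j)\,dt_j=-\eps^{z_j}/\Gamma(z_j+1)$, which blow up as $\eps\to 0^{+}$ (since $\Re(z_j)<0$) but have Hadamard finite part $0$; so if one \emph{defines} the right-hand pairing by that finite-part (equivalently, analytic-continuation) convention, the identity holds. You would need to state this convention explicitly and check it is the one implicit in Remark~\ref{ImportantRemark}(ii), or else restate the conclusion with the iterated difference in place of $\varphi(\cdot)-\varphi(0)$. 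To be fair, the paper's own proof does not address this either---it only verifies local integrability of $k_{\zvec}$, which is not where the difficulty lies---so your proposal is no less complete than the original; but as a proof of the literal statement it leaves the decisive step unjustified.
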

\begin{proof}
We first observe that since $(t_j)_+^{z_j}$ is locally integrable for $\Re(z_j)>-1$, the product of $n+1$ univariate such functions is also locally integrable as long as $\Re(z_j)>-1$, for all $j\in\N_n$. Therefore, we can use the technique of the univariate case $n+1$ times.
\end{proof}


\begin{theorem}[Fourier Transform of Multivariate Truncated Complex Powers]
The Fourier transform of a multivariate truncated power is the tempered distribution given by 
\begin{equation}\label{FourierMultivariateComplex}
    \inn{\wh{\cT_{\zvec}}(\omega|M)}{\varphi(\omega)} = \inn{\prod_{j=0}^n\frac{1}{(i\omega\cdot m_j)^{z_j+1}}}{\varphi(\omega)}.
\end{equation}
\end{theorem}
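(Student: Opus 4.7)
The plan is to reduce the claim to the univariate Fourier transform identity \eqref{Fouriercomplextruncatedeq} by exploiting the product structure of $k_{\zvec}$ and the linearity of the map $t\mapsto Mt$. Concretely, I would start from the distributional definition
\[
    \inn{\wh{\cT_{\zvec}}(\cdot|M)}{\varphi} \;=\; \inn{\cT_{\zvec}(\cdot|M)}{\wh\varphi} \;=\; \int_{\R_+^{n+1}} k_{\zvec}(t)\,\wh\varphi(Mt)\,dt,
\]
and unfold $\wh\varphi(Mt)=\int_{\R^d}\varphi(\omega)\,e^{-i\,\omega\cdot Mt}\,d\omega$. Since $Mt=\sum_{j=0}^n t_j m_j$, the exponential factors as $\prod_j e^{-i(\omega\cdot m_j)t_j}$, which matches exactly the product structure $k_{\zvec}(t)=\prod_j k_{z_j}(t_j)$.

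The second step is to interchange the order of integration and collapse the $(n+1)$-fold integral into a product of univariate integrals:
\[
    \inn{\wh{\cT_{\zvec}}(\cdot|M)}{\varphi} \;=\; \int_{\R^d}\varphi(\omega)\prod_{j=0}^n\left(\int_0^\infty k_{z_j}(t_j)\,e^{-i(\omega\cdot m_j)t_j}\,dt_j\right)d\omega.
\]
Each inner integral is, by definition, the univariate Fourier transform $\wh{k_{z_j}}$ evaluated at $\omega\cdot m_j$, and Proposition \ref{Fouriercomplextruncated} gives $\wh{k_{z_j}}(\omega\cdot m_j)=(i\,\omega\cdot m_j)^{-(z_j+1)}$. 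Substituting yields the claimed identity \eqref{FourierMultivariateComplex}.

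The main obstacle is the justification of the Fubini step: for generic $\zvec$ with $\Re(z_j)>-1$, the factors $(t_j)_+^{z_j}$ are only locally integrable and have no decay at infinity, so the iterated integrals are not absolutely convergent in the classical sense, and the pointwise value $(i\,\omega\cdot m_j)^{-(z_j+1)}$ is singular on the hyperplane $\omega\cdot m_j=0$. The clean way around this is to perform the computation first on a dense subclass of test functions $\varphi\in\cS(\R^d)$ whose Fourier transforms have compact support away from each hyperplane $\{\omega\cdot m_j=0\}$ (so that all the one-dimensional integrals $\int_0^\infty k_{z_j}(t_j)e^{-i(\omega\cdot m_j)t_j}dt_j$ converge absolutely and Fubini applies unconditionally), and then extend to all of $\cS(\R^d)$ by the density of this subclass together with the tempered-distribution regularization of $(i\,\omega\cdot m_j)^{-(z_j+1)}$ furnished by the analytic continuation mentioned in Remark \ref{ImportantRemark}(ii). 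In parameter space the identity is holomorphic in each $z_j$, so once it is established on the initial strip $\Re(z_j)>-1$, $\Re(z_j)\notin\N$, $\Im(z_j)\neq 0$, it propagates to the remaining values by analytic continuation, exactly as in the univariate case.
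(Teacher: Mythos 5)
Your proposal follows the same route as the paper's proof: unfold the distributional definition of the Fourier transform, write $\wh\varphi(Mt)$ as an integral, use $Mt=\sum_j t_j m_j$ to factor the exponential, interchange the order of integration, and reduce each univariate factor to Proposition \ref{Fouriercomplextruncated}. In fact you are more careful than the paper, which simply invokes Fubini--Tonelli without addressing the lack of absolute convergence or the singularities on the hyperplanes $\omega\cdot m_j=0$; your density-plus-regularization argument fills a gap the paper leaves open.
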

\begin{proof}
Let $\varphi\in\cS(\R^d)$. Then,
\begin{align*}
    \inn{\wh{\cT_{\zvec}}(\omega|M)}{\varphi(\omega)} &= \int_{\R^d}\wh{\cT_{\zvec}}(\omega|M)\varphi(\omega)d\omega\\
    & = \int_{\R^d}\cT_{\zvec}(\omega|M)\wh{\varphi}(\omega)d\omega \\
    &= \int_{\R^{n+1}}k_{\zvec}(t)\wh{\varphi}(Mt)dt\\
    & = \int_{\R^{n+1}}k_{\zvec}(t)\prn{\int_{\R^d}\varphi(\omega)e^{-i(Mt)\cdot \omega}d\omega}dt \\
    &= \int_{\R^d}\prn{\int_{\R_+^{n+1}}k_{\zvec}(t)e^{-i(Mt)\cdot \omega}dt}\varphi(\omega)d\omega\\
    & = \int_{\R^d}\prn{\int_{\R_+^{n+1}}\prod_{j=0}^nk_{z_j}(t_j)\exp\left(-i\sum_{j=0}^n(\omega\cdot m_j)t_j}dt_j\right)\varphi(\omega)d\omega \\
    &= \int_{\R^d}\prn{\prod_{j=0}^n\int_{\R_+}k_{z_j}(t_j)e^{-it_j(\omega\cdot m_j)}dt_j}\varphi(\omega)d\omega,
\end{align*}
where we have used the Fubini-Tonelli theorem. Using the Fourier transform of the truncated complex power yields
\[
    \inn{\wh{\cT_{\zvec}}(\omega|M)}{\varphi(\omega)} = \inn{\prod_{j=0}^n\frac{1}{(i\omega\cdot m_j)^{z_j+1}}}{\varphi(\omega)},\quad \varphi\in\cS(\R^d).\qedhere
\]
\end{proof}

We now can turn our attention to the convolution of multivariate truncated complex powers, which is a natural extension of {Proposition \ref{ConvolutionTruncated}}. However, we need to be careful about what convolution means when a direction set comes into play. Indeed, a similar convolution property as in the univariate case will hold, but the resulting multivariate truncated complex power will be defined on the union of both direction sets. In order to simplify notation, we will assume that both multivariate truncated complex powers will be defined over the same direction set or the second over a subset of the first one. 

\begin{proposition}[Convolution of Multivariate Truncated Complex Powers]\label{ConvolutionMultivariateTruncated}
Let $\zvec,\overline{\zvec}\in\C^{n+1}$ with $\Re(z_j)>-1$ and $\Re(\overline{z_j})>-1$ for $j\in\N_n$. Then
\be\label{ConvolutionMultivariateTruncatedEq}
    \inn{\cT_{\zvec}(\cdot|M)\ast \cT_{\overline{\zvec}}(\cdot|M)}{\varphi} = \inn{\cT_{\zvec+\overline{\zvec}+1}(\cdot|M)}{\varphi},
\ee
with $\varphi\in\cS(\R^d)$.
\end{proposition}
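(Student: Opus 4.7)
The plan is to reduce the multivariate statement to the univariate convolution identity of Proposition \ref{ConvolutionTruncated} by exploiting the tensor-product structure of the weight $k_{\zvec}(t) = \prod_{j=0}^n k_{z_j}(t_j)$. First, writing out the convolution pairing against $\varphi \in \cS(\R^d)$ as a double integral and applying Fubini--Tonelli, I would obtain an integral of $k_{\zvec}(t)\, k_{\overline{\zvec}}(s)\, \varphi(M(t+s))$ over $\R_+^{n+1} \times \R_+^{n+1}$.

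Next, performing the change of variables $u = t+s$ (with $t$ fixed) and interchanging the order of integration separates the inner integral across coordinates: since $k_{\zvec}(t) = \prod_{j=0}^n k_{z_j}(t_j)$, the inner integral factors as $\prod_{j=0}^n \int_0^{u_j} k_{z_j}(t_j)\, k_{\overline{z_j}}(u_j - t_j)\, dt_j$. By Proposition \ref{ConvolutionTruncated}, each one-dimensional factor equals $k_{z_j + \overline{z_j} + 1}(u_j)$, and their product is $k_{\zvec + \overline{\zvec} + 1}(u)$. Substituting back recovers $\int_{\R_+^{n+1}} k_{\zvec + \overline{\zvec} + 1}(u)\, \varphi(Mu)\, du$, which by definition equals $\inn{\cT_{\zvec + \overline{\zvec} + 1}(\cdot|M)}{\varphi}$.

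The main obstacle I expect is justifying that the convolution is well-defined as a tempered distribution and that the Fubini--Tonelli interchange is legitimate. Since tempered distributions do not admit arbitrary convolutions, I would first observe that each $\cT_{\zvec}(\cdot|M)$ is supported in the convex cone $M \R_+^{n+1}$, so convolving two such distributions yields a distribution supported in the same cone and makes sense as a tempered distribution. The Fubini--Tonelli step is then justified by the local integrability of each $k_{z_j}$ for $\Re(z_j) > -1$ together with the Schwartz decay of $\varphi$, ensuring absolute integrability of $k_{\zvec}(t)\, k_{\overline{\zvec}}(s)\, \varphi(M(t+s))$ on $\R_+^{n+1} \times \R_+^{n+1}$.

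As a cross-check, one could instead take Fourier transforms of both sides. Using the formula \eqref{FourierMultivariateComplex}, the spectrum of the left-hand side becomes $\prod_{j=0}^n (i\omega \cdot m_j)^{-(z_j+1)} (i\omega \cdot m_j)^{-(\overline{z_j}+1)}$, which collapses to $\prod_{j=0}^n (i\omega \cdot m_j)^{-(z_j + \overline{z_j} + 2)}$, exactly matching $\wh{\cT_{\zvec + \overline{\zvec} + 1}}(\omega|M)$; injectivity of the Fourier transform on $\cS'(\R^d)$ then yields the identity.
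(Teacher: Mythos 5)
Your argument is correct, but your primary route is genuinely different from the paper's. The paper proves the identity entirely on the Fourier side: it applies the exchange formula $\wh{S\ast T}=\wh{S}\cdot\wh{T}$, multiplies the two products from \eqref{FourierMultivariateComplex} so that the exponents $(z_j+1)+(\overline{z_j}+1)$ collapse, and then invokes Fourier inversion --- exactly your closing ``cross-check.'' Your main proof instead works directly in the spatial domain: unfold the convolution pairing into a double integral of $k_{\zvec}(t)\,k_{\overline{\zvec}}(s)\,\varphi(M(t+s))$, substitute $u=t+s$, and use the tensor-product structure of $k_{\zvec}$ to factor the inner integral into $n+1$ univariate convolutions, each handled by Proposition~\ref{ConvolutionTruncated}. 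Your route buys something real: it avoids multiplying two tempered distributions and applying the convolution--multiplication exchange outside the classical setting (a step the paper asserts without justification), and it is valid for all $\Re(z_j)>-1$, whereas the Fourier formula \eqref{Fouriercomplextruncatedeq} was only stated under the extra hypotheses $\Re(z)\notin\N$ and $\Im(z)\neq 0$. The paper's route is shorter and reuses machinery already established. One small point you should make explicit: the Fubini--Tonelli step and the very existence of the convolution rely on the standing assumption that the columns of $M$ lie in an open half-space, so that the cone $M\R_+^{n+1}$ is salient and $\abs{M(t+s)}$ is bounded below by a constant times $\abs{t}+\abs{s}$ on the positive cone; without this, the Schwartz decay of $\varphi$ alone does not control the polynomial growth of $k_{\zvec}(t)\,k_{\overline{\zvec}}(s)$.
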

\begin{proof}
Taking the Fourier transform on the left-hand side yields
    \[
        \inn{\cT_{\zvec}(\cdot|M)\ast \cT_{\overline{\zvec}}(\cdot|M)}{\varphi} = \inn{\wh{\cT_{\zvec}}(\omega|M)\cdot \wh{\cT_{\overline{\zvec}}}(\omega|M)}{\varphi(\omega)}.
    \]
    Now, using the expression of the Fourier transform given in \eqref{FourierMultivariateComplex} produces
    \[
        \inn{\wh{\cT_{\zvec}}(\omega|M)\cdot \wh{\cT_{\overline{\zvec}}}(\omega|M)}{\varphi(\omega)} = \inn{\prod_{j=0}^{n}\frac{1}{(i\omega\cdot m_j)^{(z_j+1)+(\overline{z_j}+1)}}}{\varphi(\omega)}.
    \]
Finally, Fourier inversion gives
    \[ 
        \inn{\cT_{\zvec}(\cdot|M)\ast \cT_{\overline{\zvec}}(\cdot|M)}{\varphi} = \inn{\cT_{\zvec+\overline{\zvec}+1}(\cdot|M)}{\varphi}.\qedhere
    \]
\end{proof}


\subsection{Introducing Complex Box Splines}\label{ComplexBoxSpline}

We now can proceed to the definition of complex box splines. The idea is based on F-box splines \ref{FBoxSplines} with a suitable candidate for the weight $f$.

\begin{definition}[Complex Box Splines]
Let $M$ be an $d\times (n+1)$ matrix whose columns form a direction set. Let $\zvec \coloneqq (z_0,z_1,\ldots,z_n)\in\C^{n+1}$ such that $\Re(z_j)>-1$, for all $j\in\N_n$. Define
\[
    B_{\zvec}(t)\coloneqq \prod_{j=0}^n B_{z_j}(t_j),
\]
and
\[
    \cB_{\zvec}(\cdot|M)\coloneqq T_{B_{\zvec}}(\cdot|M).
\]
The tempered distribution $\cB_{\zvec}(\cdot|M)$ given by
\be\label{complexboxsplineEq}
\begin{aligned}
    \cB_{\zvec}(\cdot|M): \cS(\R^d) &\rightarrow \R \\
    \varphi &\mapsto \inn{\cB_{\zvec}(\cdot|M)}{\varphi} \coloneqq\int_{\R^d}\cB_{\zvec}(x|M)\varphi(x)dx =\int_{\R_+^{n+1}}B_{\zvec}(t)\varphi(Mt)dt
\end{aligned}
\ee
is called a {box spline of complex order $\zvec$}, for short, a complex box spline.
\end{definition}

Note that, unlike in the definition of F-box splines, the integral defining complex box splines is over $\R_+^{n+1}$. Indeed, this is due to the fact that complex B-splines are not compactly supported.

Now, assume for a moment that $M$ is invertible, i.e., the direction set is a basis of $\R^d$. Set $y := Mt$ in the definition of complex box spline \eqref{complexboxsplineEq}. Then, we obtain
\[
    \inn{\cB_\zvec(y|M)}{\varphi(y)} = \inn{\frac{1}{\abs{\det(M)}}\int_{M([0,1)^n)} B_\zvec(M^{-1}y)\phi(y)dy}{\varphi(y)}.
\]
Thus, $\cB_\zvec(\cdot|M)$ can be identified with the function
\be\label{identification}
    \cB_\zvec(y|M) = \frac{1}{\abs{\det(M)}} B_\zvec(M^{-1}y)\chi_{M([0,1)^n)}(y),
\ee
in the sense that the distribution acts on a test function, as the right-hand side does. Here, $\chi_S$ denotes the characteristic function of a set $S$.

\begin{figure}[H]
    \centering
    \includegraphics[scale=0.42]{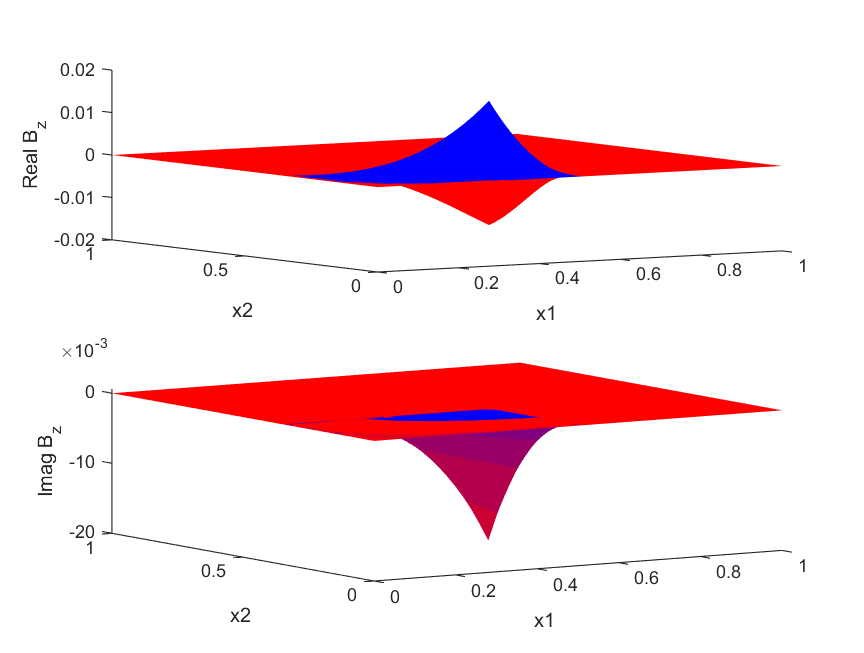}
    \caption{Real and imaginary parts of the complex box spline with $M=\begin{pmatrix}
2 & 0\\
0 & 3
\end{pmatrix}$ and $z_1=3+i$, $z_2=2+i$.}
    \label{ComplexBoxSplineFigure}
\end{figure}

The next theorem investigates the weak derivative of complex box splines.
\begin{theorem}[Weak Derivative of Complex Box Splines]
\begin{enumerate}
    \item[(i)] If $\Re(z_j)>0$, then
    \be\label{derivativecomplexboxspline}
        \inn{D_{m_j}\cB_{\zvec}(\cdot|M)}{\varphi} = \inn{\cB_{\zvec-e_j}(\cdot|M)-\tau_{m_j}\cB_{\zvec-e_j}(\cdot|M)}{\varphi}.
    \ee
    \item[(ii)] If $\Re(z_j)>k$, then
    \begin{equation}\label{Generalderivativecomplexboxspline}
        \inn{D^k_{m_j}\cB_{\zvec}(\cdot|M)}{\varphi} = \inn{\sum_{l=0}^{k}(-1)^l\binom{k}{l}\tau_{jm_j}\cB_{\zvec-km_j}(\cdot|M)}{\varphi}.
    \end{equation}
    \item[(iii)] If $\Re(z_j)>0$, for all $j\in\N_n$, then
    \begin{equation}\label{MixedDerivativeComplexBox}
        \inn{D_{M_n}\cB_{\zvec}(\cdot|M)}{\varphi} = \inn{\cB_{\zvec-1}(\cdot|M)-\tau_{1}\cB_{\zvec-1}(\cdot|M)}{\varphi}.
    \end{equation}
\end{enumerate}
\end{theorem}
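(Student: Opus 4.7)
The plan is to derive part (i) by transferring the directional derivative to the test function, converting it via the chain rule into a partial derivative with respect to the parameter $t_j$, and then integrating by parts in $t_j$ under the integral defining $\cB_\zvec(\cdot|M)$. Concretely, I would write
\[
  \inn{D_{m_j}\cB_\zvec(\cdot|M)}{\varphi} = -\int_{\R_+^{n+1}} B_\zvec(t)\,(D_{m_j}\varphi)(Mt)\,dt = -\int_{\R_+^{n+1}} B_\zvec(t)\,\frac{\partial}{\partial t_j}\varphi(Mt)\,dt,
\]
integrate by parts in the $t_j$ variable (with the other $t_i$ held fixed), and then factor the partial derivative of $B_\zvec(t)=\prod_i B_{z_i}(t_i)$ as $(DB_{z_j})(t_j)\prod_{i\neq j}B_{z_i}(t_i)$. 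What remains is a one-dimensional identity for the distributional derivative of a complex B-spline, namely $D B_{z_j}=B_{z_j-1}-\tau_1 B_{z_j-1}$. Substituting this and performing the change of variables $s=t-e_j$ on the shifted summand converts the unit shift in parameter space into the shift by $m_j$ in physical space, which is exactly $\tau_{m_j}\cB_{\zvec-e_j}(\cdot|M)$.

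The needed one-dimensional identity $DB_z=B_{z-1}-\tau_1 B_{z-1}$ can be read off from the definition $B_z=\nabla^{z+1}k_z$: since $Dk_z=k_{z-1}$ by Remark \ref{ImportantRemark}(ii), and since $\nabla^{z+1}=\nabla\circ\nabla^z$ — which follows from Pascal's identity $\binom{z+1}{k}=\binom{z}{k}+\binom{z}{k-1}$ applied termwise in Definition \ref{ComplexBackward} and is licensed by the absolute convergence established in Proposition \ref{WellDefined} — one has $DB_z=\nabla^{z+1}k_{z-1}=\nabla(\nabla^z k_{z-1})=\nabla B_{z-1}=B_{z-1}-\tau_1 B_{z-1}$.

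For part (ii) I would iterate part (i) a total of $k$ times along the single direction $m_j$. Each application replaces the distribution by $(I-\tau_{m_j})$ applied to $\cB_{\zvec-e_j}(\cdot|M)$, so after $k$ applications one obtains $(I-\tau_{m_j})^k\cB_{\zvec-ke_j}(\cdot|M)$, and a binomial expansion gives the displayed sum. The hypothesis $\Re(z_j)>k$ ensures that each intermediate degree still lies in the range required by part (i). For part (iii) I would iterate part (i) once in each of the $n+1$ directions. Because the integrand factorises as $\prod_j B_{z_j}(t_j)$ and the partial derivatives $\partial/\partial t_j$ act on disjoint variables, the iterated mixed derivative $D_M$ produces a product $\prod_{j=0}^{n}(I-\tau_{m_j})\cB_{\zvec-1}(\cdot|M)$; the telescoping interpretation of this product matches the stated right-hand side, in direct analogy with Theorem \ref{MixedTruncated}.

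The main obstacle I anticipate is the justification of the boundary terms when integrating by parts in $t_j$. At $t_j\to\infty$ the rapid decay of $\varphi\in\cS(\R^d)$ combined with the polynomial growth bounds on $B_{z_j}$ handles the upper limit, but at $t_j=0$ one needs $B_{z_j}(0)=0$. This vanishing is exactly what the hypothesis $\Re(z_j)>0$ buys: from $B_z=\nabla^{z+1}k_z$ and the fact that $k_z(-k)=0$ for $k\ge 1$ together with $k_z(0)=0$ when $\Re(z)>0$, only the $k=0$ term in Definition \ref{ComplexBackward} contributes and it vanishes. A secondary, more pedestrian obstacle is keeping the book-keeping of the change of variables $t\mapsto t+e_j$ consistent with the distributional definition of $\tau_{m_j}$ in $\R^d$, which I would handle by appealing directly to the identity $\inn{\tau_{m_j}T}{\varphi}=\inn{T}{\tau_{-m_j}\varphi}$.
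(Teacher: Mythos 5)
Your proposal is correct and follows essentially the same route as the paper: transfer the derivative to the test function, convert it to $\partial/\partial t_j$, integrate by parts in the parameter, and invoke the univariate identity $DB_{z}=B_{z-1}-\tau_1 B_{z-1}$ (which the paper simply cites from the complex B-spline literature, while you rederive it via Pascal's identity), then iterate for parts (ii) and (iii). Your additional attention to the boundary terms at $t_j=0$ and the bookkeeping of the shift $t\mapsto t+e_j$ versus $\tau_{m_j}$ is a welcome refinement of details the paper leaves implicit.
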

\begin{proof}
\begin{enumerate}
    \item[(i)] Recall that the derivative of a complex B-spline is given by \cite{Forster2005}
    \[
        D B_z(t)=B_{z-1}(t)-B_{t-1}(t-1).
    \]
    Applying this result to the partial derivative of the direct product of the $n+1$ univariate complex B-splines, we get:
    \begin{align*}
        \frac{\p}{\p t_j}B_\zvec(t) &= \prn{\frac{\p}{\p t_j}B_{z_j}(t_j)}\prod_{k\neq j}B_{z_k}(t_k)\\
        & = \prn{B_{z_j-1}(t_j)-B_{z_j-1}(t_j-1)}\prod_{k\neq j}B_{z_k}(t_k) \\
        &= B_{z-e_j}(t)-\tau_jB_{z-e_j}(t).
    \end{align*}
    Given $\varphi\in\cS(\R^s)$ and using the definition of the weak derivative of a distribution, we have:
    \begin{align*}
        \inn{D_{m_j}\cB_{\zvec}(x|M)}{\varphi(x)} &= -\int_{\R^d}\cB_{\zvec}(x|M)D_{m_j}\varphi(x)dx\\
        & = -\int_{\R_+^{n+1}}B_{\zvec}(t)\frac{\p \varphi(Mt)}{\p t_j}dt \\
        &= \int_{\R_+^{n+1}}\frac{\p}{\p t_j}B_{\zvec}(t)\varphi(Mt)dt\\
        & = \int_{\R_+^{n+1}}(B_{z-e_j}(t)-\tau_jB_{z-e_j}(t))\varphi(Mt)dt \\
        &= \int_{\R^d}(\cB_{\zvec-e_j}(x|M)-\cB_{\zvec-e_j}(x-m_j|M))\varphi(x)dx \\
        &= \inn{\cB_{\zvec-e_j}(\cdot|M)-\tau_{m_j}\cB_{\zvec-e_j}(\cdot|M)}{\varphi}.
    \end{align*}

    \item[(ii)] The proof follows similarly to (i) by recalling the general recurrence formula for the derivative of the complex B-spline \cite{Forster2005}:
    \[
        D^k B_z(t) = \sum_{l=0}^{k}(-1)^j\binom{k}{l}B_{z-k}(t-l).
    \]

    \item[(iii)] The proof proceeds similar to those above, noting that:
    \[
        \frac{\p^{n+1}}{\p t_0\p t_1\ldots\p t_n}B_\zvec(t) = B_{\zvec-1}(t)-B_{\zvec-1}(t-1),
    \]
    since each factor depends on a different variable. 
\end{enumerate}
\end{proof}

\begin{theorem}[Complex Box Spline Representation]\label{Multivariatebackwardcomplex}
A complex box spline can also be defined by means of a backward difference operator:
\[
    \inn{\cB_{\zvec}(\cdot|M)}{\varphi} = \inn{\prod_{j=0}^n\nabla_{m_j}^{z_j+1}\cT_{\zvec}(\cdot|M)}{\varphi},
\]
where
\[
    \inn{\prod_{j=0}^n\nabla_{m_j}^{z_j+1}\cT_{\zvec}(\cdot|M)}{\varphi}\coloneqq\inn{\prod_{j=0}^n\sum_{k\geq 0}(-1)^k\binom{z_j+1}{k}\tau_{km_j}\cT_{\zvec}(\cdot|M)}{\varphi}.
\]
\end{theorem}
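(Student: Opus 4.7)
The plan is to start from the right-hand side and expand each complex backward difference operator in turn, using Definition \ref{ComplexBackward} together with the integral representation of $\cT_{\zvec}(\cdot|M)$. Each application of $\nabla_{m_j}^{z_j+1}$ will, in the integral representation over $\R_+^{n+1}$, replace the factor $k_{z_j}(t_j)$ in $k_\zvec$ by $B_{z_j}(t_j)$. After iterating over $j=0,1,\ldots,n$ the integrand becomes exactly $B_\zvec(t)$, which is the definition of $\cB_\zvec(\cdot|M)$.

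First I would handle a single factor. By Definition \ref{ComplexBackward},
\[
    \inn{\nabla_{m_j}^{z_j+1}\cT_{\zvec}(\cdot|M)}{\varphi}=\sum_{k\geq 0}(-1)^k\binom{z_j+1}{k}\inn{\cT_{\zvec}(\cdot|M)}{\tau_{-km_j}\varphi}.
\]
Plugging in the integral form of $\cT_\zvec(\cdot|M)$, the inner pairing becomes $\int_{\R_+^{n+1}} k_\zvec(t)\,\varphi(Mt+km_j)\,dt$; since $Mt+km_j = M(t+ke_j)$, the substitution $s=t+ke_j$ together with the vanishing of $k_{z_j}$ on the negative axis yields $\int_{\R_+^{n+1}} k_\zvec(s-ke_j)\,\varphi(Ms)\,ds$.

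Now I would interchange summation and integration, which is legitimate by the absolute convergence $\sum_{k\geq 0}|\binom{z_j+1}{k}|<\infty$ established in Proposition \ref{WellDefined} and the rapid decay of $\varphi\in\cS(\R^d)$. Only the $j$-th factor of $k_\zvec$ is affected, and by \eqref{ComplexBSplinesEq} one has
\[
    \sum_{k\geq 0}(-1)^k\binom{z_j+1}{k}\,k_{z_j}(s_j-k)=B_{z_j}(s_j).
\]
Hence $\nabla_{m_j}^{z_j+1}\cT_\zvec(\cdot|M)$ is represented by the integral $\int_{\R_+^{n+1}} B_{z_j}(s_j)\prod_{i\neq j}k_{z_i}(s_i)\,\varphi(Ms)\,ds$, all other factors unchanged.

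Iterating this replacement for $j=0,1,\ldots,n$, and using the product interpretation of the operators given in the statement, successively converts each $k_{z_j}$ into $B_{z_j}$, giving
\[
    \inn{\prod_{j=0}^n\nabla_{m_j}^{z_j+1}\cT_\zvec(\cdot|M)}{\varphi}=\int_{\R_+^{n+1}}\prod_{j=0}^n B_{z_j}(s_j)\,\varphi(Ms)\,ds=\inn{\cB_\zvec(\cdot|M)}{\varphi},
\]
which is the assertion. The main obstacle will be bookkeeping the iterated series and justifying the interchanges at each stage; specifically, one must verify that after each partial application the remaining object still admits a representation as an integral of the form $\int_{\R_+^{n+1}}G(s)\varphi(Ms)ds$ with a locally integrable kernel $G$ of product form, so that Proposition \ref{WellDefined} and Fubini–Tonelli again apply at the next step. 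Everything else is routine once the single-factor case is established.
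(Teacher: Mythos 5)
Your proposal is correct and follows essentially the same route as the paper's proof: expand one backward difference operator at a time, shift the argument (the paper writes the translate on the distribution, you change variables in the test function — these are the same computation), use the vanishing of $(t_j-k)_+^{z_j}$ for $t_j\le k$ to keep the domain $\R_+^{n+1}$, recognize $\sum_{k\ge 0}(-1)^k\binom{z_j+1}{k}k_{z_j}(\cdot-k)$ as $B_{z_j}$, and iterate over $j$. Your remarks on justifying the interchange of sum and integral via $\sum_k\bigl|\binom{z_j+1}{k}\bigr|<\infty$ are, if anything, slightly more careful than the paper's write-up.
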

\begin{proof}
Using the definition of $\cT_{\zvec}(\cdot|M)$, we have
\begin{align*}
    \int_{\R^d}\cT_{\zvec}(x-km_0|M)\varphi(x)dx &=\int_{k}^{\infty}\int_{\R_+^{n}}k_{\zvec}(t_0-k,t_1,\ldots,t_n)\phi(Mt)dt \\
    &=\int_{\R_+^{n+1}}k_{\zvec}(t_0-k,t_1,\ldots,t_n)\varphi(Mt)dt,
\end{align*}
where the last equality follows from the fact that
\[
    k_{\zvec}(t_0-k,t_1,\ldots,t_n)=\frac{(t_0-k)_+^{z_0}}{\Gamma(z_0+1)}\cdot\ldots\cdot\frac{(t_n)_+^{z_n}}{\Gamma(z_n+1)}=0,
\]
for $t_0\leq k$. Then,
\begin{align*}
    \int_{\R^d}\nabla_{m_0}^{z_0+1}\cT_{\zvec}(x|M)\varphi(x)dx &=\int_{\R^d}\sum_{k\geq 0}(-1)^k\binom{z_0+1}{k}\tau_{km_0}\cT_{\zvec}(x|M)\varphi(x)dx \\
    &=\int_{\R_+^{n+1}}\sum_{k\geq 0}(-1)^k\binom{z_0+1}{k}k_{\zvec}(t_0-k,t_1,\ldots,t_n)\phi(Mt)dt \\
    &=\int_{\R_+^{n+1}}B_{z_0}(t_0)\frac{(t_1)_+^{z_1}}{\Gamma(z_1+1)}\cdot\ldots\cdot\frac{(t_n)_+^{z_n}}{\Gamma(z_n+1)}\phi(Mt)dt,
\end{align*}
where $B_{z_0}(t_0)$ is a B-spline of order $z_0+1$. Iterating this procedure for $z_1,\ldots,z_n$, we finally obtain
\[
    \inn{\prod_{j=0}^n\nabla_{m_j}^{z_j+1}\cT_{\zvec}(\cdot|M)}{\varphi}=\inn{\prod_{j=0}^n B_{z_j}(t_j)\cT_{\zvec}(\cdot|M)}{\varphi},
\]
where $\prod\limits_{j=0}^n B_{z_j}(t_j)$ is a tensor product of B-splines of respective orders $z_0+1,\ldots,z_n+1$.
\end{proof}


Next, we consider the existence of recurrence formulas for complex box splines.
\begin{theorem}[Recurrence Formulas for Complex Box Splines]
Let $\varphi\in\cS(\R^d)$.
\begin{enumerate}
    \item[(i)] The complex box spline satisfies the recurrence formula
    \[
        \inn{\cB_{\zvec}(\cdot|M)}{\varphi} = \inn{\int_{0}^{\infty}B_{z_n}\tau_{um_n}\cB_{\zvec\backslash z_n}(\cdot|M\backslash m_n)du}{\varphi}.
    \]
    \item[(ii)] Let $\zvec,\overline{\zvec}\in\C^{n+1}$ with $\Re(z_j)>-1$ and $\Re(\overline{z_j})>-1$, for $j\in\N_n$. Then
    \be\label{ConvolutionComplexBoxSplinesEq}
        \inn{ \cB_{\zvec}(\cdot|M)\ast \cB_{\overline{\zvec}}(\cdot|M)}{\varphi} = \inn{\cB_{\zvec+\overline{\zvec}+1}(\cdot|M)}{\varphi}.
    \ee
\end{enumerate}
\end{theorem}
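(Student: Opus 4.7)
For part (i), the plan is to peel off the last column $m_n$ from the defining integral. Writing $Mt = (M\setminus m_n)\,t' + t_n m_n$ with $t':=(t_0,\ldots,t_{n-1})$ and $B_{\zvec}(t)=B_{z_n}(t_n)\,B_{\zvec\setminus z_n}(t')$, Fubini turns the defining integral into an outer integral in $t_n$ of $B_{z_n}(t_n)$ times
\[
    \int_{\R_+^n} B_{\zvec\setminus z_n}(t')\,\varphi\bigl((M\setminus m_n)t'+t_n m_n\bigr)\,dt'.
\]
The inner integral is precisely $\inn{\cB_{\zvec\setminus z_n}(\cdot|M\setminus m_n)}{\varphi(\cdot+t_n m_n)}=\inn{\tau_{t_n m_n}\cB_{\zvec\setminus z_n}(\cdot|M\setminus m_n)}{\varphi}$, so renaming $t_n\mapsto u$ yields the claim. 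The only technical point is the justification of Fubini, which follows from the Schwartz decay of $\varphi$ together with the local integrability and polynomial decay of each $B_{z_j}$ for $\Re(z_j)>-1$.

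For part (ii), my plan is to pass to the Fourier side. Combining Theorem \ref{Multivariatebackwardcomplex} with \eqref{FourierMultivariateComplex} and the multivariate analogue of \eqref{Fourierbackward}, namely $\wh{\nabla_{m_j}^{z_j+1}T}(\omega)=(1-e^{-i\omega\cdot m_j})^{z_j+1}\wh{T}(\omega)$, which follows from the same absolutely convergent binomial expansion used in Proposition \ref{WellDefined}, I obtain
\[
    \wh{\cB_{\zvec}}(\omega|M) = \prod_{j=0}^n \prn{\frac{1-e^{-i\omega\cdot m_j}}{i\omega\cdot m_j}}^{z_j+1}.
\]
The convolution theorem then identifies $\wh{\cB_{\zvec}(\cdot|M)\ast \cB_{\overline{\zvec}}(\cdot|M)}$ with the pointwise product of the corresponding factors for $\zvec$ and $\overline{\zvec}$, and since $(z_j+1)+(\overline{z_j}+1) = (z_j+\overline{z_j}+1)+1$, this product equals $\wh{\cB_{\zvec+\overline{\zvec}+1}}(\omega|M)$; Fourier inversion closes the argument. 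A purely time-domain alternative is to expand each factor via Theorem \ref{Multivariatebackwardcomplex}, merge the backward differences using the multivariate version of \eqref{ComplexBackwardConvolution}, and then combine the two truncated powers through Proposition \ref{ConvolutionMultivariateTruncated}.

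The main obstacle I foresee is making sense of the convolution on the left-hand side of \eqref{ConvolutionComplexBoxSplinesEq}, since a convolution of two arbitrary tempered distributions need not be well-defined. Here the one-sidedness convention on the direction set (first nonzero component of each column positive) ensures that $\cB_{\zvec}(\cdot|M)$ is supported in the proper convex cone $M\R_+^{n+1}$, which is closed under addition, so that the convolution of two such distributions is well-defined in the standard sense. Alternatively, restricting to $\Re(z_j),\Re(\overline{z_j})>-\tfrac12$ places both factors in $L^2(\R^d)$ via the Fourier expression above, making the convolution an ordinary function-valued operation. Once this legitimacy is secured, the Fourier-side manipulation is purely algebraic.
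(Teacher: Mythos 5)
Your part (i) is essentially the paper's argument run in the opposite direction: the paper starts from the right-hand side, applies Fubini--Tonelli, and reassembles the variables as $y=(v,u)$ with $My=(M\backslash m_n)v+um_n$, whereas you start from the defining integral and peel off the last column. The two computations are the same splitting of $Mt$ and the same factorization of $B_{\zvec}$, so there is nothing to distinguish here beyond presentation.

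For part (ii) you take a genuinely different route. The paper stays in the time domain: it expands both factors via the backward-difference representation of Theorem \ref{Multivariatebackwardcomplex}, merges the difference operators using the convolution property \eqref{ComplexBackwardConvolution}, and then invokes Proposition \ref{ConvolutionMultivariateTruncated} to add the degrees of the truncated powers. You instead pass to the Fourier side, assemble $\wh{\cB_{\zvec}}(\omega|M)=\prod_j\bigl(\tfrac{1-e^{-i\omega\cdot m_j}}{i\omega\cdot m_j}\bigr)^{z_j+1}$ from the representation theorem together with \eqref{FourierMultivariateComplex} and the multivariate analogue of \eqref{Fourierbackward}, and read off the identity from $(z_j+1)+(\overline{z_j}+1)=(z_j+\overline{z_j}+1)+1$. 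Both arguments are correct, and in fact the paper's own proof of Proposition \ref{ConvolutionMultivariateTruncated} is itself a Fourier-side computation, so your approach simply collapses the two layers into one; it also has the advantage of making the needed hypothesis visible, since the convolution theorem for tempered distributions requires some justification. Your explicit discussion of why the convolution on the left of \eqref{ConvolutionComplexBoxSplinesEq} is well defined (support in the proper convex cone $M\R_+^{n+1}$, or the $L^2$ regime $\Re(z_j)>-\tfrac12$) addresses a point the paper passes over silently, and is a worthwhile addition. The time-domain alternative you sketch at the end is precisely the paper's proof.
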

\begin{proof}
\begin{enumerate}
    \item[(i)] Starting with the right-hand side we have
\begin{align*}
    & \inn{\int_{0}^{\infty}B_{z_n}\tau_{um_n}\cB_{\zvec\backslash z_n}(\cdot|M\backslash m_n)du}{\varphi}\\ & = \int_{\R^d}\prn{\int_{0}^{\infty}B_{z_n}\cB_{\zvec\backslash z_n}(x-um_n|M\backslash m_n)du}\varphi(x)dx \\
   & = \int_{0}^{\infty}\prn{\int_{\R^d}B_{z_n}\cB_{\zvec\backslash z_n}(x-um_n|M\backslash m_n)\varphi(x)dx}du\\
    & = \int_{0}^{\infty}\prn{\int_{\R_+^{n}}B_{\zvec}(v,u)\varphi((M\backslash m_n)v+um_n)dv}du,
\end{align*}
where the second equality follows from the theorem by Fubini-Tonelli and the third by the definition of $\cB_{\zvec}(\cdot|M)$ with $v=(v_0,\ldots,v_{n-1})$. Now, setting $y=(y_0,\ldots,y_n)\coloneqq (v,u)$, we get
\[
    My=(M\backslash m_n)v+um_n.
\]
This leads to
\[
    \int_{\R_+^{n+1}}B_{\zvec}(t)\varphi(Mt)dt = \int_{\R^d}\cB_{\zvec}(x|M)\varphi(x)dx,
\]
which implies that
\[
    \int_{\R^d}\cB_{\zvec}(x|M)\phi(x)dx = \int_{\R^d}\prn{\int_{0}^{\infty}B_{z_n}\cB_{\zvec\backslash z_n}(x-um_n|M\backslash m_n)du}\varphi(x)dx,
\]
for all $\varphi\in\cS(\R^d)$.

\item[(ii)] Using the representation of complex box splines given in {Theorem \ref{Multivariatebackwardcomplex}}, we deduce
    \[
        \inn{\cB_{\zvec}(\cdot|M)\ast \cB_{\overline{\zvec}}(\cdot|M)}{\varphi} = \inn{\prod_{j=0}^n\nabla_{m_j}^{z_j+1}\cT_{\zvec}(\cdot|M)\ast \prod_{j=0}^n\nabla_{m_j}^{\overline{z_j}+1}\cT_{\overline{\zvec}}(\cdot|M)}{\varphi}.
    \]
Using the convolution property of the backward difference operator (\ref{ComplexBackwardConvolution}) (still valid since we consider a finite product), we have
    \[
        \inn{\prod_{j=0}^n\nabla_{m_j}^{z_j+1}\cT_{\zvec}(\cdot|M) \ast \prod_{j=0}^n\nabla_{m_j}^{\overline{z_j}+1}\cT_{\overline{\zvec}}(\cdot|M)}{\varphi} = \inn{\prod_{j=0}^n\nabla_{m_j}^{(z_j+1)+(\overline{z_j}+1)}\prn{\cT_{\zvec}(\cdot|M)\ast\cT_{\overline{\zvec}}(\cdot|M)}}{\varphi},
    \]
where the convolution property of the multivariate truncated complex powers (\ref{ConvolutionMultivariateTruncatedEq}) yields
    \[
        \inn{\prod_{j=0}^n\nabla_{m_j}^{(z_j+1)+(\overline{z_j}+1)}\prn{\cT_{\zvec}(\cdot|M)\ast\cT_{\overline{\zvec}}(\cdot|M)}}{\varphi} = \inn{\prod_{j=0}^n\nabla_{m_j}^{z_j+\overline{z_j}+1}\cT_{\zvec+\overline{\zvec}+1}(\cdot|M)}{\varphi},
    \]
given the linearity of the backward difference operator with the series defining it converging absolutely. Thus, 
    \[
        \inn{\cB_{\zvec}(\cdot|M)\ast \cB_{\overline{\zvec}}(\cdot|M)}{\varphi} = \inn{\cB_{\zvec+\overline{\zvec}+1}(\cdot|M)}{\varphi},
    \]
completing the proof.
\end{enumerate}
\end{proof}

\subsubsection{The Fourier Transform of Complex Box Splines}
We recall that the Fourier transform of a complex B-spline $B_z$ is given by
\[
\inn{\wh{B_z}(\omega)}{\varphi(\omega)} = \inn{\prn{\frac{1-e^{-i\omega}}{i\omega}}^{z+1}}{\varphi(\omega)}
\]

As with single-variable complex functions, we have to be aware of the branch cuts of the individual functions $w_j^{z_j}$, when considering a product of the form
\[
    \prod_{j=0}^n w_j^{z_j}.
\]
However, we can restrict the argument of $w_j$ to $[-\pi, \pi)$ for each function $w_j^{z_j}$, which ensures that the product is single-valued and, therefore, well-defined.

We can use this knowledge to formulate the following result.

\begin{proposition}[Fourier Transform of Complex Box Splines]
The Fourier transform of a complex box spline is given by the tempered distribution
\be\label{FourierComplexBoxSplineEq}
    \inn{ \wh{\cB_{\zvec}}(\omega|M)}{\varphi(\omega)} = \inn{\prod_{j=0}^n\prn{\frac{1-e^{-i\omega\cdot m_j}}{i\omega\cdot m_j}}^{z_j+1}}{\varphi(\omega)}.
\ee
\end{proposition}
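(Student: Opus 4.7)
The plan is to use the representation of complex box splines via directional complex backward differences acting on multivariate truncated complex powers, combine this with the known Fourier transform of the latter, and invoke the multiplier property of $\nabla_{m_j}^{z_j+1}$ under the Fourier transform.

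First, I would invoke Theorem \ref{Multivariatebackwardcomplex}, which asserts
\[
    \inn{\cB_{\zvec}(\cdot|M)}{\varphi} = \inn{\prod_{j=0}^n\nabla_{m_j}^{z_j+1}\cT_{\zvec}(\cdot|M)}{\varphi}.
\]
Taking the Fourier transform and using its continuity on $\cS'(\R^d)$, the problem reduces to computing the Fourier image of each directional complex backward difference operator $\nabla_{m_j}^{z_j+1}$ on a tempered distribution. Mimicking the univariate computation of \eqref{Fourierbackward}, but with $\tau_k$ replaced by the directional shift $\tau_{km_j}$, the directional analogue yields the multiplier
\[
    \sum_{k\geq 0}(-1)^k\binom{z_j+1}{k}e^{-ik(\omega\cdot m_j)} = \bigl(1-e^{-i\omega\cdot m_j}\bigr)^{z_j+1},
\]
where the last identity is the generalized binomial series, whose absolute convergence is guaranteed by the estimate $\sum_{k\geq 0}\lvert\binom{z_j+1}{k}\rvert < \infty$ established in the proof of Proposition \ref{WellDefined}. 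With the branch of the complex power fixed by restricting $\arg(1-e^{-i\omega\cdot m_j})\in[-\pi,\pi)$ as discussed just prior to the statement, this identity is unambiguous.

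Next I would apply this $n+1$ times (the operators $\nabla_{m_j}^{z_j+1}$ commute, since each is a Fourier multiplier and multiplication is commutative) and combine with the Fourier transform of the multivariate truncated complex power given in \eqref{FourierMultivariateComplex}:
\[
    \inn{\wh{\cT_{\zvec}}(\omega|M)}{\varphi(\omega)} = \inn{\prod_{j=0}^n\frac{1}{(i\omega\cdot m_j)^{z_j+1}}}{\varphi(\omega)}.
\]
Multiplying the two contributions yields
\[
    \inn{\wh{\cB_{\zvec}}(\omega|M)}{\varphi(\omega)} = \inn{\prod_{j=0}^n\frac{(1-e^{-i\omega\cdot m_j})^{z_j+1}}{(i\omega\cdot m_j)^{z_j+1}}}{\varphi(\omega)},
\]
which is precisely \eqref{FourierComplexBoxSplineEq}.

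The main obstacle is the rigorous justification of the Fourier multiplier identity for $\nabla_{m_j}^{z_j+1}$ in the distributional, multivariate, complex-degree setting; this requires swapping the infinite sum with the Fourier pairing and verifying that the resulting symbol $(1-e^{-i\omega\cdot m_j})^{z_j+1}/(i\omega\cdot m_j)^{z_j+1}$ defines a tempered distribution whose branch is consistent across the product. The sum–pairing exchange is handled by absolute convergence of the binomial series (as in the proof of \eqref{Fourierbackward}), and the branch consistency follows from the convention $\arg\in[-\pi,\pi)$ noted in the excerpt, so no serious new difficulty arises beyond careful bookkeeping.
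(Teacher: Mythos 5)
Your proposal is correct, but it takes a genuinely different route from the paper. The paper's proof works directly from the definition $\inn{\cB_{\zvec}(\cdot|M)}{\varphi}=\int_{\R_+^{n+1}}B_{\zvec}(t)\varphi(Mt)\,dt$ with $B_{\zvec}$ the tensor product of univariate complex B-splines: it applies the duality $\inn{\wh{T}}{\varphi}=\inn{T}{\wh{\varphi}}$, swaps integrals via Fubini--Tonelli, factors $e^{-i(Mt)\cdot\omega}=\prod_{j}e^{-it_j(\omega\cdot m_j)}$, and then quotes the univariate formula \eqref{Fouriercomplexbsplineeq} for each factor $\int_{\R_+}B_{z_j}(t_j)e^{-it_j(\omega\cdot m_j)}\,dt_j$. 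You instead start from the backward-difference representation of Theorem \ref{Multivariatebackwardcomplex}, push the Fourier transform through each $\nabla_{m_j}^{z_j+1}$ to obtain the multiplier $\sum_{k\geq 0}(-1)^k\binom{z_j+1}{k}e^{-ik(\omega\cdot m_j)}=(1-e^{-i\omega\cdot m_j})^{z_j+1}$, and multiply by \eqref{FourierMultivariateComplex}; this is exactly the multivariate analogue of deriving \eqref{Fouriercomplexbsplineeq} from \eqref{Fourierbackward} and \eqref{Fouriercomplextruncatedeq}. The paper's route is more self-contained at this point (it needs only the tensor-product structure and the already-stated univariate transform), while yours makes the structural parallel with the univariate theory explicit and reuses two results the paper has just proved; the price is that you must justify summing the binomial series against the distributional pairing and multiplying the tempered distribution $\wh{\cT_{\zvec}}(\cdot|M)$ by the resulting symbol, including the branch bookkeeping. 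Your appeal to the absolute convergence of $\sum_{k\geq 0}\bigl\lvert\binom{z_j+1}{k}\bigr\rvert$ and to the $\arg\in[-\pi,\pi)$ convention handles this at the same level of rigor the paper itself adopts elsewhere, so the argument is acceptable as written.
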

\begin{proof}
Let $\varphi\in\cS(\R^d)$. Using the Fourier transform of tempered distributions, we have
\begin{align*}
    \inn{\wh{\cB_{\zvec}}(\omega|M)}{\varphi(\omega)} &= \int_{\R^d} \wh{\cB_{\zvec}}(\omega|M)\varphi(\omega) d\omega\\
    & = \int_{\R^d} \cB_{\zvec}(t|M) \wh{\varphi}(t) dt \\
    &= \int_{\R^{n+1}} B_{\zvec}(t) \wh{\varphi}(Mt) dt\\
    & = \int_{\R^{n+1}} B_{\zvec}(t) \prn{\int_{\R^d} \varphi(\omega) e^{-i(Mt)\cdot \omega} d\omega} dt \\
    &= \int_{\R^d} \prn{\int_{\R_+^{n+1}} B_{\zvec}(t) e^{-i(Mt)\cdot \omega} dt} \varphi(\omega) d\omega\\
    & = \int_{\R^d} \prn{\prod_{j=0}^n\int_{\R_+} B_{z_j}(t_j) e^{-it_j (\omega\cdot m_j)} dt_j} \varphi(\omega) d\omega,
\end{align*}
where we have used the Fubini-Tonelli theorem. Now, using the Fourier transform of the complex B-spline, we have
\[
    \int_{\R^d} \wh{\cB_{\zvec}}(\omega|M) \varphi(\omega) d\omega = \int_{\R^d} \prod_{j=0}^n \prn{\frac{1-e^{-i\omega\cdot m_j}}{i\omega\cdot m_j}}^{z_j+1} \varphi(\omega) d\omega,
\]
which holds for any $\varphi\in\cS(\R^d)$. Thus,
\[
    \inn{\wh{\cB_{\zvec}}(\omega|M)}{\varphi(\omega)} = \inn{\prod_{j=0}^n\prn{\frac{1-e^{-i\omega\cdot m_j}}{i\omega\cdot m_j}}^{z_j+1}}{\varphi(\omega)}.\qedhere
\]
\end{proof}

Now, consider the function $\Omega_j:\R^d\to\C$,
\[
    \Omega_j(\omega) := \frac{1-e^{-i\omega\cdot m_j}}{i\omega\cdot m_j}, \quad j\in\N_n.
\]
Then,
\[
    \Omega_j^{z_j+1}(\omega)=e^{(z_j+1)\log(\Omega_j(\omega))},
\]
with $\Re(z_j) > -\frac{1}{2}$, for all $j\in\N_n$. The logarithm function has a branch cut along the negative real axis. However, since $\Re(z_j) > -\frac{1}{2}$, it ensures that we stay away from the negative real axis. Hence, there are no problems with the branch cut for $\log(\Omega_j)$. Therefore, the function $\Omega_j^{z_j+1}$ is well-defined.

Given that
\[ 
    \log(\Omega_j(\omega)) = \log|\Omega_j(\omega)| + i\arg(\Omega_j(\omega)), 
\]
where $\arg(\Omega_j(\omega))$ is restricted to the range $[-\pi, \pi)$ and $\Omega_j(\omega) \neq 0$, the function $\Omega_j^{z_j+1}(\omega)$ expands to
\[ 
    \Omega_j^{z_j+1}(\omega) = \Omega_j^{\Re(z_j)+1}(\omega)\exp(i\Im(z_j)\log|\Omega_j(\omega)|)\, \exp(-i\Im(z_j)\arg(\Omega_j(\omega))). 
\]

The product of these functions is then
\[ 
    \prod_{j=0}^n \Omega_j^{z_j+1}(\omega) = \left(\prod_{j=0}^n \Omega_j^{\Re(z_j)+1}(\omega) \right) \exp\left(i \sum\limits_{j=0}^n \Im(z_j)\log|\Omega_j(\omega)|\right) \exp\left(-i \sum\limits_{j=0}^n \Im(z_j)\arg(\Omega_j(\omega))\right). 
\]

\begin{remark}
In the multivariate setting, an interpretation of the complex box spline as in \eqref{spec} becomes more intricate due to the presence of multiple frequency components due to the inner product of $\omega$ with the columns $m_j$ of the matrix $M$. The interplay between these columns and the variable $\omega$ can give rise to a variety of phase behaviors. 

To this end, suppose that $d = n+1$, so that $M$ is a square matrix. We consider the following three cases.
\begin{enumerate}
    \item[(i)] If $M$ is a diagonal matrix (i.e. $m_j$ become scalars) then the multivariate setting is reduced to a product of independent univariate settings. Each function $\Omega_j$ will simply be a scaling of the univariate function $\Omega$ by the factor $m_j$. This means that complex interplays between directions in the frequency domain are missing. Each scalar $m_j$ will simply scale the frequency.
    \begin{enumerate}[label=\arabic*.,start=1]
        \item {Directionality Removed:} The function $\Omega_j^{z_j+1}$ no longer affects a particular direction in the frequency spectrum. Instead, each $m_j$ simply scales the corresponding frequency component.
        \item {Uniform Frequency Shifting:} Given that $\omega \cdot m_j = \omega m_j$, the argument of $\Omega_j^{z_j+1}$ will be uniformly affected. This means that the frequency shifts due to the inner product will be uniform across the spectrum.
        \item {Amplification/Attenuation:} The magnitude of each scalar $m_j$ will determine the amplification or attenuation effect. Larger values of $|m_j|$ will result in more pronounced effects in the frequency domain.
        \item {Cumulative Effects:} Since all scalars are affecting the same ``universal" direction (there is only one dimension to consider), their effects will accumulate. Two large scalar values with similar $z_j$ parameters will have a compounded effect on the frequency spectrum.
        \item {Simplification:} In many ways, the scalar scenario can be viewed as multiple univariate cases combined. Each scalar $m_j$ scales the frequency, and the various $z_j$ values determine the specific transformation (shift, scaling, phase change).
    \end{enumerate}
    \item[(ii)] If $M$ is an orthogonal matrix the influence of each $m_j$ is largely independent of the others, as they point in non-overlapping directions in space. 
    \begin{enumerate}[label=\arabic*.,start=1]
        \item {Directional Independence:} Each $\Omega_j^{z_j+1}$ will primarily affect frequency components in the direction of its corresponding column $m_j$, without significant interference from other directions. This means the spectrum will show distinct shifts and enhancements corresponding to each $m_j$, and these effects can be analyzed individually.
        \item {Frequency Shifting:} The argument of $\Omega_j^{z_j+1}$ will be influenced by the inner product $\omega \cdot m_j$. As in the univariate case, if this inner product is positive or negative, it will cause frequency components to shift in the direction of $m_j$. The sign of $\Im(z_j)$ will determine the direction of this shift.
        \item {Scaling:} The magnitude of $\Omega_j^{z_j+1}$ will scale frequencies in the direction of $m_j$, and this scaling will be determined by the value of $\Re(z_j)+1$. This operates similarly to the univariate case, just restricted to the direction of $m_j$.
        \item {Phase Changes:} $\Im(z_j)$ will introduce a phase shift and a scaling factor for each function $\Omega_j^{z_j+1}$ in its own direction, independent of other directions.
        \item {Simplification:} In the overall frequency spectrum, we can expect to see distinct patterns or effects along each of the orthogonal directions $m_j$. These patterns will not blend into each other, keeping the effects of each $m_j$ largely separable.
    \end{enumerate}
    \item[(iii)] If $M$ is an invertible matrix then each column $m_j$ has a unique contribution that cannot be affected by the others.
    \begin{enumerate}[label=\arabic*.,start=1]
        \item {Directional Independence:} Each column $m_j$ points in a direction that is not affected by any combination of the other vectors. As a result, each $\Omega_j^{z_j+1}$ will have its own distinct influence on the frequency spectrum.
        \item {Frequency Spectrum Complexity:} Unlike the orthogonal case, where effects are isolated along distinct axes, the linearly independent case can produce more intricate patterns. The reason is that while the vectors do not overlap perfectly, they can still point in somewhat similar directions, leading to potential regions of compounded or interfered effects.
        \item {Frequency Shifting:} The inner product $\omega \cdot m_j$ will result in frequency shifts that are not purely along one axis or direction. Depending on the values of $\omega$ and $m_j$, and the signs of $\Im(z_j)$, these shifts can be in various directions and magnitudes.
        \item {Scaling and Phase Changes:} As before, $\Re(z_j)$ will influence the scaling in the direction of $m_j$, while $\Im(z_j)$ will impact the phase. However, due to the unique (and not necessarily orthogonal) directions of $m_j$, these scalings and phase changes can interact in more complicated ways.
        \item {Superposition:} Since the vectors are not orthogonal, the effects of different $m_j$ can superimpose. In areas of the frequency domain where the influence of multiple $m_j$ coincide, you might observe combined or counteracting effects.
        \item {Interference:} When multiple $m_j$ influence the same region of the frequency spectrum but do so with contrasting effects (due to different $z_j$ values or simply the nature of the $\Omega_j^{z_j+1}$ functions), interference patterns may emerge. This can manifest itself as unexpected amplifications or attenuations.
    \end{enumerate}
\end{enumerate}
\end{remark}

The next result tells when we can identify a complex box spline with an $L^2$-function on $\R^d$.
\begin{theorem}[$L^2(\R^d)$ Identification]
If $M$ is invertible, then the complex box spline
\be
    \cB_\zvec(y|M) = \frac{1}{\abs{\det(M)}}B_\zvec(M^{-1}y)\chi_{M([0,1)^d)}(y)
\ee
can be identified with an $L^2(\R^d)$ function, if $\Re(z_j)>-\frac{1}{2}$, for all $j\in\N_n$.
\end{theorem}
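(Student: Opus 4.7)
The plan is to show the Fourier transform $\wh{\cB_\zvec}(\cdot|M)$ lies in $L^2(\R^d)$ and then invoke Plancherel/Parseval to conclude $\cB_\zvec(\cdot|M)\in L^2(\R^d)$, which (combined with its compact-type support description via $M^{-1}y$) yields the identification. This mirrors the univariate argument for complex B-splines cited in property (iv) of complex B-splines, but in the multivariate setting I will use the invertibility of $M$ to separate variables.

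First, I would start from the explicit Fourier transform derived in \eqref{FourierComplexBoxSplineEq}, writing $\wh{\cB_\zvec}(\omega|M)=\prod_{j=0}^n \Omega_j^{z_j+1}(\omega)$ with $\Omega_j(\omega)=\frac{1-e^{-i\omega\cdot m_j}}{i\omega\cdot m_j}$. Using the expansion $\Omega_j^{z_j+1}(\omega)=\Omega_j^{\Re(z_j)+1}(\omega)\exp(i\Im(z_j)\log|\Omega_j(\omega)|)\exp(-i\Im(z_j)\arg\Omega_j(\omega))$ already worked out in the text, together with the restriction $\arg\Omega_j(\omega)\in[-\pi,\pi)$, I obtain the pointwise bound
\[
    |\wh{\cB_\zvec}(\omega|M)|^2\;\leq\;C_\zvec\,\prod_{j=0}^n|\Omega_j(\omega)|^{2(\Re(z_j)+1)},\qquad C_\zvec:=\exp\!\Bigl(2\pi\sum_{j=0}^n|\Im(z_j)|\Bigr).
\]
Since $|\Omega_j(\omega)|=\bigl|2\sin(\omega\cdot m_j/2)/(\omega\cdot m_j)\bigr|$, the factor $|\Omega_j(\omega)|^{2(\Re(z_j)+1)}$ is bounded near $\omega\cdot m_j=0$ (the limit is $1$) and decays like $|\omega\cdot m_j|^{-2(\Re(z_j)+1)}$ at infinity along that direction.

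Next I would exploit the invertibility of $M$ (so $d=n+1$) and perform the linear change of variables $u=M^{\top}\omega$, giving $u_j=\omega\cdot m_j$ and $du=|\det M|\,d\omega$. The product structure then decouples the integrand, and Fubini--Tonelli gives
\[
    \int_{\R^d}|\wh{\cB_\zvec}(\omega|M)|^2\,d\omega\;\leq\;\frac{C_\zvec}{|\det M|}\prod_{j=0}^n\int_{\R}\left|\frac{2\sin(u/2)}{u}\right|^{2(\Re(z_j)+1)}du.
\]
Each one-dimensional integral is finite exactly when the integrand is integrable at $\infty$, i.e.\ when $2(\Re(z_j)+1)>1$, which is precisely the hypothesis $\Re(z_j)>-\tfrac12$. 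Behavior at the origin causes no issue because $\sin(u/2)/u\to 1/2$.

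Hence $\wh{\cB_\zvec}(\cdot|M)\in L^2(\R^d)$, and Plancherel's theorem promotes the tempered distribution $\cB_\zvec(\cdot|M)$ to an element of $L^2(\R^d)$; the pointwise representative coincides almost everywhere with $\frac{1}{|\det M|}B_\zvec(M^{-1}y)\chi_{M([0,1)^d)}(y)$ as exhibited in \eqref{identification}. The main obstacle I anticipate is the careful handling of the complex power $\Omega_j^{z_j+1}$: one must verify that the branch choice $\arg\Omega_j\in[-\pi,\pi)$ agreed upon in the paper guarantees both the uniform boundedness of the phase factor $\exp(-\Im(z_j)\arg\Omega_j)$ (which contributes only the harmless constant $C_\zvec$) and the single-valuedness of the product, so that the pointwise bound above is legitimate before integration. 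Everything else reduces to a product of one-dimensional integrability checks of the sinc function, which is exactly the univariate $L^2$ criterion $\Re(z_j)>-\tfrac12$ applied componentwise.
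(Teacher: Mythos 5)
Your argument is correct, but it takes a genuinely different route from the paper. The paper works entirely on the spatial side: it writes out $\int_{\R^d}\abs{\cB_\zvec(y|M)}^2dy$, substitutes $x=M^{-1}y$ (which is where invertibility enters), and reduces the integral to $\n[2]{B_\zvec}{L^2(\R^{n+1})}$ up to a factor of $\abs{\det(M)}$; finiteness then follows immediately from the already-recorded univariate fact that $B_{z_j}\in L^2(\R)$ for $\Re(z_j)>-\tfrac12$ (property (iv) of complex B-splines), since $B_\zvec$ is a tensor product. You instead work on the Fourier side: you bound the phase factor $\exp(-\Im(z_j)\arg\Omega_j)$ by $e^{\pi\abs{\Im(z_j)}}$ using the branch convention, change variables $u=M^{\top}\omega$ (invertibility again), decouple into one-dimensional integrals of $\abs{2\sin(u/2)/u}^{2(\Re(z_j)+1)}$, and check integrability at infinity, which is exactly the condition $\Re(z_j)>-\tfrac12$; Plancherel then gives the conclusion. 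Both proofs hinge on the same two ingredients --- a linear change of variables justified by $\det M\neq 0$ and the componentwise criterion $2(\Re(z_j)+1)>1$ --- but the paper's version is a two-line reduction to a cited univariate result, while yours is self-contained (it effectively re-proves that univariate result) and has the added benefit of directly establishing the statement the paper only infers afterwards via Parseval, namely that $\wh{\cB_\zvec}(\cdot|M)$ itself is an $L^2(\R^d)$ function. Your handling of the branch cut and the constant $C_\zvec$ is the one genuinely delicate point of your route and you address it adequately; the only minor bookkeeping remark is that your final bound should carry the factor $\abs{\det M}^{-1}$ from the Jacobian, which of course does not affect finiteness.
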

\begin{proof}
We can immediately compute
\be
    \int_{\R^d}\abs{\cB_\zvec(y|M)}^2dy=\frac{1}{\abs{\det(M)}^2}\int_{\R^d}\abs{B_\zvec(M^{-1}y)\chi_{M([0,1)^d)}(y)}^2dy.
\ee
Setting $x=M^{-1}y$, yields
\be
    \int_{\R^d}\abs{\cB_\zvec(y|M)}^2dy=\int_{\R^d}\abs{B_\zvec(x)}^2dx<\infty,
\ee
since the direct product of univariate complex B-splines is in $L^2(\R^d)$ when $\Re(z_j)>-\frac{1}{2}$, for all $j\in\N_n$.
\end{proof}

By Parseval's identity , this proposition also implies that the Fourier transform of a complex box spline can be identified with an $L^2(\R^d)$ function in the case where $M$ is invertible. Thus,
\[
    \wh{\cB_{\zvec}}(\omega|M) = \prod_{j=0}^n\prn{\frac{1-e^{-i\omega\cdot m_j}}{i\omega\cdot m_j}}^{z_j+1},
\]
in the $L^2(\R^d)$ sense.

Note that the zeros of $\wh{\cB_{\zvec}}(\omega|M)$ are given by those $\omega$ which are orthogonal to some $m_j$ for each $j$. These will be the elements that are in the intersection of the orthogonal complements of the $m_j$. However, if the matrix $M$ is diagonal, then the $m_j$ are simply the elements in the main diagonal. Since each $m_j$ is a constant, each factor in the product becomes a function of $\omega$ alone and not a scalar product with $m_j$. The function $\wh{\cB_{\zvec}}(\omega|M)$ then becomes
\[
    \wh{\mathcal{B}_{\zvec}}(\omega|M) = \prod_{j=0}^n\prn{\frac{1-e^{-i\omega m_j}}{i\omega m_j}}^{z_j+1}.
\]
The zeros of each factor are the solutions of $e^{-i\omega m_j} = 1$, i.e., $\omega \in \frac{2\pi}{m_j}\Z$. The latter set is a well-defined as we assumed that the vectors in the direction set are non-zero.

Moreover, by using l'Hopital's rule on each factor in the product, $\wh{\cB_{\zvec}}(\cdot|M)$ has a continuous extension $\wh{\cB_{\zvec}}(0|M)=1$ if $M$ is invertible.

\subsubsection{Decay Rate of the Fourier Transform}
Here, we investigate the the decay of the Fourier transform. We will see that the form of the matrix $M$ becomes important. 

First, let us suppose that $M$ is a diagonal matrix. Then, each term in the product becomes effectively decoupled from the others in terms of their dependence on $\omega$. The vectors $m_j$ align with the coordinate axes and each $\omega_j = m_j \cdot \omega$ corresponds to a single component of the vector $\omega$. The function $\wh{\cB_{\zvec}}(\cdot|M)$ then becomes a product of functions of the individual components of $\omega$ and the decay of $\wh{\cB_{\zvec}}(\omega|M)$ as $\abs{\omega} \to \infty$ is determined by the decay of the individual terms in the product.

In this case, the function $\wh{\cB_{\zvec}}(\cdot|M)$ can be interpreted as the product of univariate functions, each with its own rate of decay. The overall decay rate of $\wh{\cB_{\zvec}}(\cdot|M)$ is then determined by the term with the smallest exponent in its power-law decay. Therefore $\wh{\cB_{\zvec}}(\cdot|M)$ decays like
\be
    \frac{1}{\abs{\omega}^{\alpha+1}} \quad \text{ as } \abs{\omega}\rightarrow\infty,
\ee
where $\alpha \coloneqq \min\{\Re(z_j)\,:\, jj\in\N_n\}$. The Sobolev embedding theorem implies then that
\be
    \cB_{\zvec}(\cdot|M)\in W^{k,2}(\R^d), \quad \text{ for } k < \alpha+\tfrac{1}{2}.
\ee
Furthermore, if we assume that $k \geq \frac{d}{2}$, then the Sobolev embedding theorem allows us to conclude that $\cB_{\zvec}(\cdot|M)$ has a representative that belongs to the Hölder space $C^{l,\gamma}(\R^d)$, where $l = \floor{k - \frac{d}{2}}$ and $\gamma = k - \frac{d}{2} - l$. We can embed into the specific Hölder space $C^{\alpha,\gamma}(\R^d)$, by adjusting $k$ such that $\alpha = k - \frac{d}{2}$, while maintaining $k \geq \frac{d}{2}$.

\begin{figure}[H]
    \centering
    \includegraphics[scale=0.4]{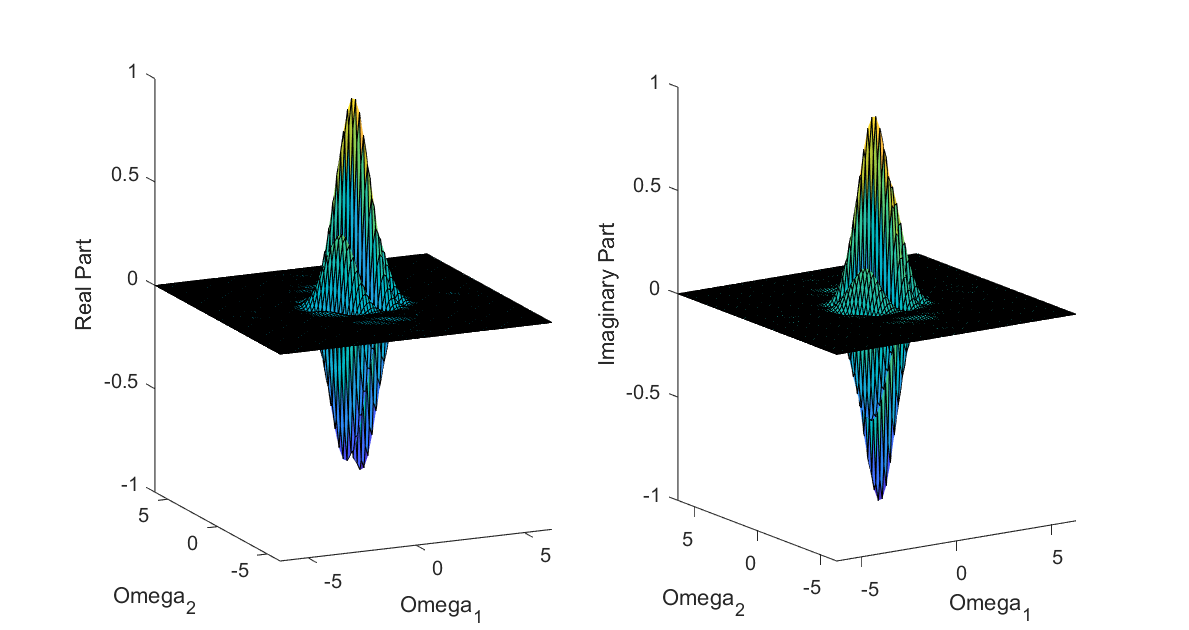}
    \caption{Real and imaginary parts of the Fourier transform of the complex box spline with $M=\begin{pmatrix}
2 & 0\\
0 & 3
\end{pmatrix}$.}
    \label{FourierComplexBoxSplinePicture}
\end{figure}

\begin{remark}
If we try to weaken the assumption on $M$ and assume, for instance, that $M$ is diagonalizable, then we transform the problem into the eigenspace of $M$. The diagonalizable matrix can be written in the form of $M = P^{-1} D P$, where $P$ is the matrix of eigenvectors and $D$ is the diagonal matrix of eigenvalues. Then, the decay rate of $\wh{\cB_{\zvec}}(\cdot|M)$ as $\abs{\omega} \to \infty$ depends not only on the minimum value of $\Re(z_j) + 1$ for $0 \leq j \leq n$ but also on the magnitudes of the eigenvalues $\lambda_j$ of $M$.

In particular, each term in the product will decay as 
\[
    \frac{1}{\abs{\lambda_j (P\omega)}},
\]
so the overall rate of decay of $\wh{\cB_{\zvec}}(\cdot|M)$ will be determined by the smallest eigenvalue in magnitude, if all $z_j$ are equal. However, if the $z_j$ values are not equal, then the situation is more complex, because the contribution to the decay of the $(z_j+1)$ power in each term needs to be taken into account. In this case, the overall decay rate could be slower or faster than $\frac{1}{\abs{\omega}^{\alpha+1}}$, depending on the values of $\Re(z_j) + 1$ and the eigenvalues $\lambda_j$ of $M$.
\end{remark}

\subsubsection{Further Properties of Complex Box Splines}
\begin{theorem}
The complex box spline generates a partition of unity:
\be
    \sum_{k\in\Z^s}\cB_\zvec(x-k|M)=1.
\ee
\end{theorem}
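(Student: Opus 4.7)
The plan is to apply the Poisson summation formula to rewrite
\[
    \sum_{k\in\Z^d}\cB_\zvec(x-k|M) \;=\; \sum_{\ell\in\Z^d}\wh{\cB_\zvec}(2\pi\ell|M)\,e^{2\pi i \ell\cdot x}.
\]
If I can show that $\wh{\cB_\zvec}(2\pi\ell|M)=\delta_{\ell,0}$, the right-hand side collapses to the constant $1$ and the theorem follows.

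To evaluate these Fourier coefficients I would use the explicit formula \eqref{FourierComplexBoxSplineEq}. At $\ell=0$, the continuous extension noted just after \eqref{FourierComplexBoxSplineEq} gives $\wh{\cB_\zvec}(0|M)=1$. For $\ell\in\Z^d\setminus\{0\}$, since $\Span(M)=\R^d$ (part of the direction-set definition), there exists at least one column $m_{j_0}$ with $\ell\cdot m_{j_0}\neq 0$; assuming, as is standard for partition-of-unity statements, that the columns $m_j$ have integer entries, this quantity is a nonzero integer. Then $1-e^{-2\pi i\ell\cdot m_{j_0}}=0$ while $2\pi i\ell\cdot m_{j_0}\neq 0$, and since $\Re(z_{j_0}+1)>0$ the corresponding factor $\prn{\frac{1-e^{-2\pi i\ell\cdot m_{j_0}}}{2\pi i\ell\cdot m_{j_0}}}^{z_{j_0}+1}$ equals $0^{z_{j_0}+1}=0$. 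The remaining factors (for which possibly $\ell\cdot m_j=0$) contribute the finite l'Hopital value $1$, so the full product vanishes and $\wh{\cB_\zvec}(2\pi\ell|M)=0$ for every $\ell\neq 0$.

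The main technical obstacle is the rigorous justification of Poisson summation. Complex box splines are not compactly supported, and for small $\Re(z_j)$ the decay on both sides is only marginal. For $\min_j\Re(z_j)$ sufficiently large relative to $d$, the decay estimate $|\wh{\cB_\zvec}(\omega|M)|\lesssim |\omega|^{-(\alpha+1)}$ with $\alpha=\min_j\Re(z_j)$ established earlier in this subsection guarantees absolute summability of both the spatial sum and the Fourier series, so the classical pointwise version of Poisson summation applies directly. In the borderline regime I would instead invoke the distributional Poisson formula $\sum_{k\in\Z^d}\delta_k=\sum_{\ell\in\Z^d}e^{2\pi i\ell\cdot(\cdot)}$ in $\cS'(\R^d)$, pair against an arbitrary test function, and argue that the resulting tempered distribution coincides with the constant $1$; the spectral calculation above ensures that only the $\ell=0$ term survives.
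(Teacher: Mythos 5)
Your proof is correct and follows essentially the same route as the paper's: Poisson summation combined with the explicit Fourier transform \eqref{FourierComplexBoxSplineEq}, the continuous extension $\wh{\cB_\zvec}(0|M)=1$, and the vanishing of $\wh{\cB_\zvec}(2\pi\ell|M)$ for $\ell\neq 0$. You are in fact more careful than the paper, which silently assumes the columns $m_j$ have integer entries and does not address the applicability of Poisson summation to the non--compactly supported $\cB_\zvec(\cdot|M)$; both of the points you raise are genuine hypotheses that the statement needs.
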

\begin{proof}
Using the Poisson summation formula, we have that
\be
    \sum_{k\in\Z^d}\cB_\zvec(x-k|M)=\sum_{k\in\Z^d}\widehat{\cB_\zvec}(2k\pi|M)e^{-2k\pi i x}.
\ee
We note that $\widehat{\cB^{\zvec}}(\cdot|M)$ has the continuous extension $\widehat{\cB_{\zvec}}(0|M)=1$
and
\be
    \wh{\cB_\zvec}(2k\pi|M)=\prod_{j=0}^n\prn{\frac{1-e^{-i2k\pi\cdot m_j}}{i2k\pi\cdot m_j}}^{z_j+1}=0,
\ee
if $k\in\Z^d\backslash\{0\}$. Hence,
\be
    \sum_{k\in\Z^s}\cB_\zvec(x-k|M)=1.\qedhere
\ee
\end{proof}

Complex box splines also satisfy a refinement equation.
\begin{theorem}
The complex box spline $\cB_{\zvec}(\cdot|M)$ satisfies the two-scale relation
\be
    \cB_{\zvec}\prn{\tfrac{x}{2}|M} = \sum_{k\in\Z^d}h^{\zvec}(k)\cdot\cB_{\zvec}(x-k|M),
\ee
where
\be
    h_{\zvec}(k)\coloneqq\left(\tfrac{1}{2}\right)^{\sum\limits_{j=0}^nz_j+n+1-d}\sum_{\substack{\sum_{j=0}^nt_jm_j=k \\ t_0,\ldots,t_n\in\N}}\left[\prod_{j=0}^n\binom{z_j+1}{t_j}\right].
\ee
\end{theorem}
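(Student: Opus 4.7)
The plan is to prove the refinement equation on the Fourier side, so that the binomial convolution structure on the mask $h_{\zvec}$ becomes transparent. First, I would take the Fourier transform of both sides of the proposed two-scale relation. Using the scaling property $\widehat{f(\cdot/2)}(\omega)=2^d\widehat{f}(2\omega)$ for the left-hand side, and the translation property together with termwise Fourier transformation for the right-hand side, the identity to prove reduces to
\begin{equation*}
2^d\,\widehat{\cB_{\zvec}}(2\omega|M)=H_{\zvec}(\omega)\,\widehat{\cB_{\zvec}}(\omega|M),\qquad H_{\zvec}(\omega):=\sum_{k\in\Z^d}h_{\zvec}(k)\,e^{-ik\cdot\omega}.
\end{equation*}
This is an equation of tempered distributions, but since $\widehat{\cB_{\zvec}}(\cdot|M)$ is given by the explicit function in \eqref{FourierComplexBoxSplineEq}, the verification will ultimately be pointwise almost everywhere.

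Second, I would insert the product formula \eqref{FourierComplexBoxSplineEq} into the ratio $\widehat{\cB_{\zvec}}(2\omega|M)/\widehat{\cB_{\zvec}}(\omega|M)$ and use the factorization $1-e^{-2i\omega\cdot m_j}=(1-e^{-i\omega\cdot m_j})(1+e^{-i\omega\cdot m_j})$ in each factor. The $(i\omega\cdot m_j)^{z_j+1}$ denominators contribute a factor $2^{z_j+1}$ after doubling $\omega$, and this combines with the $2^d$ prefactor from the scaling. All told, the target identity collapses to
\begin{equation*}
H_{\zvec}(\omega)=2^d\prod_{j=0}^n\left(\frac{1+e^{-i\omega\cdot m_j}}{2}\right)^{z_j+1}.
\end{equation*}
Here I need to be careful with the branch of the complex power, but since $\Re(z_j)>-1$ the function $(1+e^{-i\omega\cdot m_j})/2$ stays in the right half plane away from its zeros, so the principal branch of the logarithm used in \eqref{Fouriercomplexbsplineeq} is consistent with the one employed for $\widehat{\cB_{\zvec}}$.

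Third, I would expand each factor by the generalized binomial series
\begin{equation*}
\left(\frac{1+e^{-i\omega\cdot m_j}}{2}\right)^{z_j+1}=\left(\tfrac{1}{2}\right)^{z_j+1}\sum_{t_j\geq 0}\binom{z_j+1}{t_j}e^{-it_j\,\omega\cdot m_j}.
\end{equation*}
The hypothesis $\Re(z_j)>-1$ gives $\Re(z_j+1)>0$, which (using the same $\Gamma$-asymptotics as in the proof of \textbf{Proposition \ref{WellDefined}}) makes $\sum_{t_j\geq 0}|\binom{z_j+1}{t_j}|$ finite. Therefore the series converges absolutely and uniformly in $\omega$, and the finite product of these series can be multiplied out and reordered by Fubini for series. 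Grouping the resulting exponentials according to $k=\sum_{j=0}^n t_j m_j\in\Z^d$ and collecting the $2$-powers into $\bigl(\tfrac{1}{2}\bigr)^{\sum_j z_j+n+1-d}$ yields exactly $\sum_{k\in\Z^d}h_{\zvec}(k)e^{-ik\cdot\omega}$, which is $H_{\zvec}(\omega)$ as required.

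Finally, inverting the Fourier transform and invoking uniqueness recovers the claimed two-scale relation. I expect the main obstacle to be twofold: first, making the branch-cut argument for the $(z_j+1)$-power consistent on both sides of the identity (which requires staying with the principal logarithm and using that $\omega\mapsto(1+e^{-i\omega\cdot m_j})/2$ avoids the negative real axis), and second, justifying that the formal Fourier series $\sum_k h_{\zvec}(k)e^{-ik\cdot\omega}$ actually represents a well-defined tempered distribution so that the distributional identity at the start makes sense; both issues are resolved by the absolute convergence of the binomial series under the hypothesis $\Re(z_j)>-1$.
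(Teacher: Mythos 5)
Your proposal is correct and follows essentially the same route as the paper: both verify the refinement relation on the Fourier side by forming the ratio of $\widehat{\cB_{\zvec}}$ at the two scales, factoring $1-e^{-2i\theta}=(1-e^{-i\theta})(1+e^{-i\theta})$ in each factor, expanding $(1+e^{-i\omega\cdot m_j})^{z_j+1}$ by the generalized binomial series, and regrouping the multi-sum according to $k=\sum_j t_j m_j$. Your version is in fact somewhat more careful than the paper's on two points the paper glosses over — the origin of the $2^{d}$ (i.e.\ the $-d$ in the exponent of $\tfrac12$) from the Fourier scaling of $\cB_{\zvec}(\cdot/2)$, and the absolute convergence of $\sum_{t}\bigl|\binom{z_j+1}{t}\bigr|$ justifying the reordering — but the underlying argument is the same.
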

\begin{proof}
We first observe
\begin{align*}
    \frac{\wh{\cB_{\zvec}}(\omega|M)}{\wh{\cB_{\zvec}}\prn{\frac{\omega}{2}|M}} &=\prod_{j=0}^n\prn{\frac{1-e^{-i\omega\cdot m_j}}{i\omega\cdot m_j}}^{z_j+1}\cdot \prn{\frac{i\frac{\omega}{2}\cdot m_j}{1-e^{-i\frac{\omega}{2}\cdot m_j}}}^{z_j+1} \\
    & = \prod_{j=0}^n\frac{1}{2^{z_j+1}}\prn{\frac{1-e^{-i\omega\cdot m_j}}{1-e^{-i\frac{\omega}{2}\cdot m_j}}}^{z_j+1} =\prod_{j=0}^n\frac{1}{2^{z_j+1}}\prn{1+e^{-i\frac{\omega}{2}\cdot m_j}}^{z_j+1},
\end{align*}
where the last equality follows from the relation
\[
    \frac{1-x^2}{1-x}=1+x,
\]
with $x=e^{-i\frac{\omega}{2}\cdot m_j}$ and making sure that $\omega$ and $m_j$ are not orthogonal for all $j$. Using the binomial theorem, we obtain
\[
    \prn{1+e^{-i\frac{\omega}{2}\cdot m_j}}^{z_j+1}=\sum_{t\geq 0}\binom{z_j+1}{t}e^{-ti\frac{\omega}{2}\cdot m_j}.
\]
Substituting this expression into the above equation yields
\[
    \frac{\wh{\cB_{\zvec}}(\omega|M)}{\wh{\cB_{\zvec}}\prn{\frac{\omega}{2}|M}}=\prod_{j=0}^n\frac{1}{2^{z_j+1}}\sum_{t\geq 0}\binom{z_j+1}{t}e^{-ti\frac{\omega}{2}\cdot m_j}.
\]
Distributing the product over the sum gives
\[
    \left(\tfrac{1}{2}\right)^{\sum\limits_{j=0}^nz_j+1}\sum_{t_0\geq 0}\ldots\sum_{t_n\geq 0}\left[\prod_{j=0}^n\binom{z_j+1}{t_j}\exp\left(-\sum_{j=0}^nt_j\pi i\tfrac{\omega}{2}\cdot m_j\right)\right].
\]
Now, set 
\[
    k :=\sum_{j=0}^n t_jm_j.
\]
Since the $t_j$ are non-negative integers and the $m_j$ are integer vectors, $k \in \N^d$. Then, we can rewrite the expression above as
\[
    \left(\tfrac{1}{2}\right)^{\sum\limits_{j=0}^nz_j+n+1}\sum_{k\in\Z^d}\sum_{\substack{\sum_{j=0}^n t_jm_j=k \\ t_0,\ldots,t_n\in\N}}\left[\prod_{j=0}^n\binom{z_j+1}{t_j}e^{-k\pi i \frac{\omega}{2}}\right],
\]
where the inner sum is over all combinations of $t_0,\ldots,t_n$ such that their associated $m_j$ vectors sum to $k$. This is effectively a count of how many ways we can select the $t_j$ to get each particular $k$. 

The next step is to separate out the parts of the term inside the sum that depend on $k$ and those that depend on the $t_j$. We can pull out $\exp(-k\cdot \pi i\omega)$, because it does not depend on the $t_j$. Thus, we obtain
\[
    \left(\tfrac{1}{2}\right)^{\sum\limits_{j=0}^n z_j+n+1}\sum_{k\in\Z^d}e^{-k\pi i \frac{\omega}{2}}\sum_{\substack{\sum_{j=0}^n t_jm_j=k \\ t_0,\ldots,t_n\in\N}}\left[\prod_{j=0}^n\binom{z_j+1}{t_j}\right].
\]
By defining
\[
    h_{\zvec}(k)\coloneqq\left(\tfrac{1}{2}\right)^{\sum\limits_{j=0}^nz_j+n+1-d}\sum_{\substack{\sum_{j=0}^nt_jm_j=k \\ t_0,\ldots,t_n\in\N}}\left[\prod_{j=0}^n\binom{z_j+1}{t_j}\right],
\]
we have shown the two-scale relation.
\end{proof}

\section{Fractional Differential Operators and Complex Box Splines}

%
%

This section aims at providing a short introduction to fractional operators, elucidating their properties and potential applications, particularly in relation to complex B-splines and complex box splines. The content is based on the work presented in \cite{FractionalIntegrals1993} and \cite{Forster2010} but is presented in a novel way to ensure originality and clarity of understanding.

\subsection{Lizorkin Spaces}

Before reviewing fractional operators, it is crucial to understand the function spaces on which these operators are defined. Two of such spaces are the {Lizorkin spaces}, denoted by $\Phi(\R^n)$ and $\Psi(\R^{n})$, respectively, which are subspaces of the Schwartz space $\cS(\R^n)$. These spaces were chosen for their unique properties when applied to the Fourier transform of a function. In essence, the fractional derivative and integral in Lizorkin spaces behave similarly to their integral-order counterparts, simply multiplying by a specific factor. This facilitates the identification of several fractional differential and integral operators, and they form endomorphisms that satisfy properties analogous to those of their traditional counterparts.

\begin{definition}[Lizorkin Spaces]
The {Lizorkin spaces} $\Phi(\R^n)$ and $\Psi(\R^n)$ are defined as
\be\label{Lizorkin1}
\Phi(\R^n) \coloneqq \left\{\varphi \in \cS(\R^n) \,:\, \wh{\varphi}\in\Psi(\R^n)\right\},
\ee
where
\be\label{Lizorkin2}
    \Psi(\R^n)\coloneqq \left\{ \varphi \in \cS(\R^n) \,:\, \p^{\mu}\varphi(0) = 0, \forall \mu \in \N^n \right\}.
\ee
\end{definition}

The Lizorkin spaces play a crucial role in the theory of fractional differentiation and integration, as they allow the identification of several fractional differential and integral operators. The elements of $\Psi(\R^n)$ can be characterized as follows:

\begin{proposition}[Characterization of $\Psi(\R^n)$ \cite{Hodge2009}]
For all $\varphi\in\cS(\R^n)$, the following assertions are equivalent:
\begin{enumerate}
    \item[(i)] $\varphi\in\Psi(\R^n)$;
    \item[(ii)] $(\p^{\mu}\varphi)(\omega)\in o(\abs{\omega}^{t})$ as $\abs{\omega}\rightarrow 0$, $\forall \mu \in \N^n$ and $\forall t\in\R^+$;
    \item[(iii)] $\abs{\omega}^{-2m}\varphi\in\cS(\R^n)$, $\forall m\in\mathbb{N}$. 
\end{enumerate}
\end{proposition}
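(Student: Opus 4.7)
The plan is to prove the three-way equivalence through the cyclic chain (i) $\Rightarrow$ (ii) $\Rightarrow$ (iii) $\Rightarrow$ (i). All three statements encode the same phenomenon, namely that $\varphi$ is \emph{flat} at the origin, so each implication amounts to re-expressing this flatness in a different analytic language.

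First I would prove (i) $\Rightarrow$ (ii) by applying Taylor's theorem at $0$ to each derivative $\partial^\mu \varphi$. Because (i) forces every Taylor coefficient of $\partial^\mu \varphi$ at $0$ to vanish, the Lagrange remainder provides a bound $|\partial^\mu \varphi(\omega)| \leq C_N |\omega|^{N+1}$ for every $N \in \N$, with $C_N$ finite since $\varphi \in \cS(\R^n)$. Choosing $N$ larger than any given $t \in \R^+$ then yields $\partial^\mu \varphi(\omega) = o(|\omega|^t)$ as $|\omega|\to 0$, which is (ii).

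For (iii) $\Rightarrow$ (i), I would use that if $|\omega|^{-2m} \varphi$ lies in $\cS(\R^n)$ it is in particular continuous, hence bounded near $0$, so that $\varphi(\omega) = O(|\omega|^{2m})$ as $\omega \to 0$. Comparing this asymptotic with the finite Taylor polynomial of $\varphi$ at $0$ of degree $2m-1$ forces $\partial^\mu \varphi(0) = 0$ for every multi-index with $|\mu| < 2m$, and letting $m \to \infty$ recovers (i).

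The hard part will be (ii) $\Rightarrow$ (iii), where I must show that $\psi(\omega) := |\omega|^{-2m} \varphi(\omega)$, defined off the origin and extended by $0$, is genuinely a Schwartz function rather than merely continuous. Decay and derivative bounds at infinity are immediate from the Leibniz rule, since $|\omega|^{-2m}$ and all its derivatives are bounded on $\{|\omega|\geq 1\}$. The obstacle is smoothness at $0$: on $\R^n \setminus \{0\}$ the Leibniz rule decomposes $\partial^\alpha \psi$ into terms of the form $\partial^\beta(|\omega|^{-2m}) \, \partial^{\alpha - \beta} \varphi(\omega)$, whose singular prefactor is controlled by $|\omega|^{-2m - |\beta|}$ while the second factor is $o(|\omega|^t)$ for \emph{every} $t\in\R^+$ by (ii). Choosing $t > 2m + |\alpha|$ forces each $\partial^\alpha \psi(\omega) \to 0$ as $\omega \to 0$; a mean value argument, applied inductively on $|\alpha|$, then upgrades these pointwise limits to genuine derivatives at the origin, all equal to zero. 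The combination of the radial singularity with the infinite-order vanishing of $\varphi$ is the delicate point, and this is where I expect to spend most of the effort.
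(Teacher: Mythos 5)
The paper does not actually prove this proposition: it is quoted as a known characterization of $\Psi(\R^n)$ with a citation to the literature, so there is no in-paper argument to measure yours against. Judged on its own, your cyclic scheme (i) $\Rightarrow$ (ii) $\Rightarrow$ (iii) $\Rightarrow$ (i) is sound, and each step is the standard one. The implications (i) $\Rightarrow$ (ii) (Taylor expansion with vanishing polynomial part and a remainder controlled by the Schwartz seminorms) and (iii) $\Rightarrow$ (i) (boundedness of $\abs{\omega}^{-2m}\varphi$ near the origin forces the degree-$(2m-1)$ Taylor polynomial of $\varphi$ to be $o(\abs{\omega}^{2m-1})$, hence to vanish identically, since a nonzero polynomial of degree at most $2m-1$ cannot be $o(\abs{\omega}^{2m-1})$ along a ray where its lowest homogeneous part is nonzero) are complete as sketched. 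You correctly isolate (ii) $\Rightarrow$ (iii) as the only delicate point, and your treatment is the right one: $\p^{\beta}\prn{\abs{\omega}^{-2m}}$ is homogeneous of degree $-2m-\abs{\beta}$ away from the origin, so each Leibniz term $\p^{\beta}\prn{\abs{\omega}^{-2m}}\,\p^{\alpha-\beta}\varphi$ is $o(\abs{\omega}^{\,t-2m-\abs{\beta}})$ for every $t$ and hence tends to $0$, and the mean-value lemma (a function continuous at $0$, differentiable off $0$, whose gradient admits a limit at $0$, is differentiable at $0$ with that limit as gradient) promotes these limits to genuine derivatives by induction on $\abs{\alpha}$. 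Two points to make explicit in the write-up: the base case of that induction, namely that $\psi\coloneqq\abs{\omega}^{-2m}\varphi$ extends continuously by $0$, which is exactly (ii) with $\mu=0$ and $t>2m$; and the fact that membership in $\cS(\R^n)$ also requires the seminorm bounds $\sup\abs{\omega^{\gamma}\p^{\alpha}\psi}<\infty$, which follow from continuity near the origin together with the rapid decay of $\varphi$ and the boundedness of all derivatives of $\abs{\omega}^{-2m}$ on $\{\abs{\omega}\geq 1\}$ -- worth a sentence so the reader sees that smoothness at the origin is genuinely the only obstruction.
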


The Fourier transform has a special property: it interchanges the decay rates of a function and its derivatives.

Consider a function $\varphi \in \Phi(\R^n)$. By definition, the Fourier transform of $\varphi$, denoted by $\wh{\varphi}$, belongs to $\Psi(\R^n)$. This means that all derivatives of $\wh{\varphi}$ at the origin vanish, that is, $(\p^{\mu}\wh{\varphi})(0) = 0$ for all $\mu \in N^n$.

Now, let us consider the second condition in the previous proposition for $\Psi(\R^n)$. It states that for all $\varphi \in \Psi(\R^n)$, the derivatives of $\varphi$ at the origin are of order $o(\abs{\omega}^{t})$ as $\abs{\omega}\rightarrow 0$, for all $\mu \in N^n$ and $t\in\R^+$. This condition is satisfied by $\wh{\varphi}$, since all its derivatives at the origin vanish.

Therefore, we can see that the Fourier transform of a function in $\Phi(\R^n)$ results in a function in $\Psi(\R^n)$, which satisfies the conditions of the proposition. This shows that the Fourier transform maps $\Phi(\R^n)$ to $\Psi(\R^n)$.

We can characterize the topological dual spaces of $\Phi(\R^n)$ and $\Psi(\R^n)$ using the topological dual space $\cS'(\R^n)$ of $\cS(\R^n)$. Indeed, consider the closed
subset $\mathcal{P}\coloneqq \{ T \in \cS'(\R^n) \,:\, \text{supp}\left(\wh{T}\right) = \{0\}\}$ of $\cS(\R^n)$, which can be identified with the set of polynomials in $\cS'(\R^n)$.

\begin{proposition}[Characterization of Dual Spaces \cite{Hodge2009}]\label{CharacterizationDualSpaces}
The topological dual spaces of $\Phi(\R^n)$ and $\Psi(\R^n)$ can be characterized as follows:
\begin{enumerate}
    \item[(i)] $\Phi'(\R^n)=\cS(\R^n)/\mathcal{P}$;
    \item[(ii)] $\Psi'(\R^n)=\cS(\R^n)/\wh{\mathcal{P}}$.
\end{enumerate}
\end{proposition}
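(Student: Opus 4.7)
The plan is to realize both $\Phi'(\R^n)$ and $\Psi'(\R^n)$ as quotients of $\cS'(\R^n)$ by first exhibiting a surjective restriction map out of $\cS'(\R^n)$, and then identifying its kernel. Throughout, I would use that $\Psi(\R^n)$ is a closed subspace of $\cS(\R^n)$, since it is the intersection of the kernels of the continuous linear functionals $\varphi\mapsto(\p^{\mu}\varphi)(0)$, $\mu\in\N^n$, and that $\Phi(\R^n)$ is in turn closed as the preimage of $\Psi(\R^n)$ under the continuous automorphism $\mathcal{F}$ of $\cS(\R^n)$.

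First I would invoke the Hahn--Banach theorem: every continuous linear functional on the closed subspace $\Psi(\R^n)$ (respectively $\Phi(\R^n)$) extends to one on $\cS(\R^n)$, so the restriction maps
\[
    r_{\Psi}:\cS'(\R^n)\to\Psi'(\R^n),\qquad r_{\Phi}:\cS'(\R^n)\to\Phi'(\R^n),
\]
are surjective. This immediately yields $\Psi'(\R^n)\cong\cS'(\R^n)/\ker r_{\Psi}$ and $\Phi'(\R^n)\cong\cS'(\R^n)/\ker r_{\Phi}$, reducing the proposition to an identification of the two kernels.

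Next I would establish the key lemma: $\ker r_{\Psi}=\wh{\mathcal{P}}$, i.e., a tempered distribution $S$ vanishes on $\Psi(\R^n)$ iff $\Supp(S)\subseteq\{0\}$. The inclusion ``$\supseteq$'' is immediate, since any $S=\sum_{|\alpha|\le N}c_{\alpha}\p^{\alpha}\delta_0$ satisfies $\inn{S}{\psi}=\sum_{\alpha}(-1)^{|\alpha|}c_{\alpha}(\p^{\alpha}\psi)(0)=0$ for every $\psi\in\Psi(\R^n)$. For the converse, I would exploit the inclusion $C_c^{\infty}(\R^n\setminus\{0\})\subset\Psi(\R^n)$ (any compactly supported smooth function vanishing near $0$ has all its derivatives at $0$ equal to zero): if $S$ annihilates $\Psi(\R^n)$, then in particular $\inn{S}{\varphi}=0$ for all $\varphi\in C_c^{\infty}(\R^n\setminus\{0\})$, which forces $\Supp(S)\subseteq\{0\}$. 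The classical Schwartz structure theorem for distributions supported at a point then writes $S$ as a finite linear combination of $\p^{\alpha}\delta_0$, and by taking the inverse Fourier transform such distributions are precisely the elements of $\wh{\mathcal{P}}$. This proves (ii).

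Part (i) then follows by transferring the problem to $\Psi(\R^n)$ via the Fourier transform. Since $\mathcal{F}:\Phi(\R^n)\to\Psi(\R^n)$ is by construction a bijection and an isomorphism of topological vector spaces, for $T\in\cS'(\R^n)$ and $\varphi\in\Phi(\R^n)$ one has $\inn{T}{\varphi}=\inn{\wh{T}}{\mathcal{F}^{-1}\varphi}$ up to a constant, so $T$ annihilates $\Phi(\R^n)$ iff $\wh{T}$ annihilates $\Psi(\R^n)$. By the lemma just proved this is equivalent to $\Supp(\wh{T})\subseteq\{0\}$, which by definition of $\mathcal{P}$ is equivalent to $T\in\mathcal{P}$. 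Thus $\ker r_{\Phi}=\mathcal{P}$ and (i) follows. The main obstacle in this plan is the converse direction of the key lemma, which rests on the (standard but essential) density/inclusion $C_c^{\infty}(\R^n\setminus\{0\})\subset\Psi(\R^n)$ and on the Schwartz structure theorem; everything else is bookkeeping with Hahn--Banach and Fourier duality.
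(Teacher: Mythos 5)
The paper does not prove this proposition at all---it is quoted verbatim from \cite{Hodge2009}---so there is no in-paper argument to compare yours against; what you have written is the standard proof and it is essentially correct. Your decomposition is the natural one: Hahn--Banach gives surjectivity of the restriction maps $\cS'(\R^n)\to\Psi'(\R^n)$ and $\cS'(\R^n)\to\Phi'(\R^n)$ (legitimate, since both subspaces are closed in the Fr\'echet space $\cS(\R^n)$ for the reasons you give), and the whole content is then the identification of the kernels. Your key lemma is handled correctly in both directions: the inclusion $C_c^{\infty}(\R^n\setminus\{0\})\subset\Psi(\R^n)$ forces any annihilator of $\Psi(\R^n)$ to be supported at the origin, the Schwartz structure theorem pins it down as a finite combination of $\p^{\alpha}\delta_0$, and these are exactly the Fourier transforms of polynomials, i.e.\ $\wh{\mathcal{P}}$; part (i) then transfers via $\inn{T}{\varphi}=\inn{\wh{T}}{\mathcal{F}^{-1}\varphi}$, using that $\Psi(\R^n)$ is reflection-invariant so that $\mathcal{F}^{-1}$ also carries $\Phi(\R^n)$ onto $\Psi(\R^n)$. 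Two small caveats. First, the proposition (and your conclusion) concerns the \emph{topological} duals, whereas Hahn--Banach only delivers the algebraic isomorphism $\Psi'(\R^n)\cong\cS'(\R^n)/\ker r_{\Psi}$; to match the strong dual topology with the quotient topology one needs an extra (standard, but nontrivial) open-mapping/lifting argument for these spaces, which you do not address. Second, note that the statement as printed in the paper reads $\cS(\R^n)/\mathcal{P}$ rather than $\cS'(\R^n)/\mathcal{P}$; your proof implicitly and correctly works with the dual space, which is what is intended.
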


\subsection{Fractional Integral and Derivative Operators}

In this section, our attention is directed towards the space $\R^{n+1}$, where we introduce the notions of fractional integrals and derivatives. 
These definitions in this subsection are precisely tailored to the context of Lizorkin spaces $\Phi(\R^{n+1})$ and $\Psi(\R^{n+1})$. Furthermore, it should be noted that from this point on, we shall presume that the support of functions residing within the space $\Psi(\R^{n+1})$ is confined to the region $[0,\infty)^{n+1}$.

\begin{definition}[Fractional Integral]
Let $\zvec \in \C^{n+1}$ be a multi-index with $\text{Re}(z_j)> -1$, for all $j\in\N_n$. The fractional integral $\cD^{-z} : \Phi(\R^{n+1}) \rightarrow \Phi(\R^{n+1})$ is defined by
\[
(\mathcal{D}^{-\zvec}\varphi)(x) := \frac{1}{\Gamma (\zvec+1)} \int_{\R^{n+1}} t_+^{\zvec} \varphi(x+t)dt.
\]
\end{definition}

\begin{definition}[Fractional Derivative]
Let $n \in \N$. For all $j\in\N_n$, let $m_j := \ceil{\text{Re}(z_j)+1}$. Set $\nu_j = m_j-z_j$ and $m = \sum\limits_{j=0}^{n} m_j$. The fractional derivative $\cD^z : \Phi(\R^{n+1}) \rightarrow \Phi(\R^{n+1})$ is given by
\[
\begin{split}
    (\mathcal{D}^\zvec\varphi)(x) &:= \frac{1}{\Gamma (\nu+1)} \frac{\p^m}{\p_{t_0}^{m_{0}}\ldots\p_{t_n}^{m_{n}}} \int_{\R^{n+1}} t^{\nu}_+ \varphi(x+t)dt \\
    &= \frac{1}{\Gamma (\nu+1)} \int_{\R^{n+1}} t^{\nu}_+ (D^m\varphi)(x+t)dt \\
    &= D^m\cD^{-\nu}\varphi.
\end{split}
\]
\end{definition}

Recalling the kernel function 
\[
    k_\zvec(t)=\frac{t_+^\zvec}{\Gamma(\zvec+1)} = \prod_{j=0}^n \frac{(t_j)_+^{z_j}}{\Gamma(z_j+1)},
\]
that was used to define multivariate truncated complex powers in {Definition \ref{MultivariateComplexPower}}, we can rewrite the definitions of the fractional derivative and integral in the following form:
\be \label{FractionalDefinitionForm}
    \mathcal{D}^\zvec\varphi = (D^{m}\varphi) \ast k_{(m-1)-(\zvec+1)}=D^{m}(\varphi\ast k_{(m-1)-(\zvec+1)}), \quad \text{where } m = \ceil{\text{Re}(\zvec)+1},
\ee
and
\[
    \mathcal{D}^{-\zvec}\varphi=\varphi\ast k_{\zvec+1}.
\]

These definitions yield a significant consequence, one that plays a vital role in our subsequent analysis. The proof of the statement can be found in  \cite{FractionalIntegrals1993}.

\begin{proposition}[Invariance of Lizorkin Spaces]
The fractional derivative and integral leave the Lizorkin space $\Phi(\R^{n+1})$ invariant.
\end{proposition}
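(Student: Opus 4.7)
The plan is to reduce the claim to an assertion on the Fourier side, where both fractional operators act as multiplications by explicit symbols, and then verify that these multiplications preserve $\Psi(\R^{n+1})$. Since $\varphi\in\Phi(\R^{n+1})$ is equivalent to $\wh{\varphi}\in\Psi(\R^{n+1})$, invariance of $\Phi$ under $\cD^{\pm\zvec}$ will follow from invariance of $\Psi$ under the corresponding Fourier multipliers.

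First I would use the convolution identities in \eqref{FractionalDefinitionForm} together with the tensor product form of $k_\zvec$ and the multivariate version of Proposition \ref{Fouriercomplextruncated} to compute the symbols. This yields multipliers of the form $\prod_{j=0}^n (i\omega_j)^{\alpha_j}$ with $\alpha_j\in\C$: for $\cD^{-\zvec}$ one obtains $\alpha_j=-(z_j+2)$, while for $\cD^{\zvec}$ the polynomial factor coming from $D^m$ combines with the symbol of $k_{(m-1)-(\zvec+1)}$ to give $\alpha_j=z_j+1$. It therefore suffices to prove that for every multi-exponent $\alpha\in\C^{n+1}$, multiplication by $\omega\mapsto\prod_j(i\omega_j)^{\alpha_j}$ maps $\Psi(\R^{n+1})$ into itself.

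To establish this multiplier property I would invoke the characterization of $\Psi$ recalled in the text, together with the standing assumption that elements of $\Psi(\R^{n+1})$ are supported in $[0,\infty)^{n+1}$. By Taylor's theorem with integral remainder applied along each coordinate axis, any $\psi\in\Psi$ can be written in the form $\psi(\omega)=\prod_j (\omega_j)_+^{N}\,\tilde\psi(\omega)$ for arbitrarily large $N$, with $\tilde\psi\in\cS(\R^{n+1})$ still vanishing to all orders at the origin and still supported in the positive orthant. Choosing $N$ large relative to $-\Re(\alpha_j)$ exhibits $\prod_j(i\omega_j)^{\alpha_j}\psi(\omega)$ as a factor of the form $\prod_j (\omega_j)_+^{N+\alpha_j}$, which is smooth with all derivatives vanishing at the origin, times a Schwartz function; Leibniz's rule then yields Schwartz decay at infinity. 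Inverting the Fourier transform finishes the argument.

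The main obstacle is the behaviour of the symbols near the coordinate hyperplanes $\omega_j=0$: since $\alpha_j$ is generically non-integer, $(i\omega_j)^{\alpha_j}$ is not a priori smooth there, so that only the rapid vanishing of $\psi\in\Psi$ at the origin, combined with the support restriction to the positive orthant, salvages smoothness of the product. The quantitative side of that step, namely bounding every Schwartz seminorm of $\prod_j(i\omega_j)^{\alpha_j}\psi$ in terms of those of $\psi$ via Leibniz, is routine but somewhat tedious bookkeeping; once carried out, one also has control on the derivatives at the origin, which are all seen to vanish, thereby closing the invariance statement for both $\cD^{-\zvec}$ and $\cD^{\zvec}$ simultaneously.
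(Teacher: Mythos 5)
The paper offers no proof of this proposition at all; it defers to \cite{FractionalIntegrals1993}, so there is no internal argument to compare yours against. Your Fourier-multiplier reduction is, however, exactly the standard route taken in that reference: $\varphi\in\Phi(\R^{n+1})$ iff $\wh{\varphi}\in\Psi(\R^{n+1})$, the operators $\cD^{\pm\zvec}$ become multiplication by symbols of the form $\prod_j(i\omega_j)^{\alpha_j}$ (your exponents are consistent with \eqref{FractionalDefinitionForm} and \eqref{Fouriercomplextruncatedeq}), and the infinite-order vanishing built into $\Psi$ absorbs the singularity of the symbol. You also correctly isolate the genuine danger point in dimension $\geq 2$: the symbol is singular on the entire coordinate hyperplanes $\{\omega_j=0\}$, not only at the origin, and membership in $\Psi$ alone does not force vanishing there; it is the paper's standing assumption that elements of $\Psi(\R^{n+1})$ are supported in $[0,\infty)^{n+1}$ (equivalently, the hyperplane-adapted Lizorkin space of \cite{FractionalIntegrals1993}) that rescues smoothness of the product. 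So the approach is sound and is essentially the intended one.

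Two places need tightening before the argument is airtight. First, $(\omega_j)_+^{N+\alpha_j}$ is \emph{not} smooth for non-integer $\alpha_j$ --- it is only $C^{\floor{N+\Re(\alpha_j)}}$ --- so the claim that it is ``smooth with all derivatives vanishing at the origin'' is literally false for any fixed $N$. You must either let $N$ depend on the order of the derivative being controlled, or, more cleanly, argue directly that on $\{\omega_j>0\}$ every Leibniz term $(\omega_j)^{\alpha_j-k}\,\p^{\beta}\psi$ extends continuously by $0$ to $\{\omega_j=0\}$ because $\p^{\beta}\psi$ vanishes there to infinite order (which follows from the support assumption and smoothness of $\psi$). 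Second, you should record in one line that the convolution theorem is being applied to $\varphi\ast k_{\bullet}$ with $k_{\bullet}$ merely a tempered distribution, and that multiplication by the symbol preserves both the support condition and the vanishing of all derivatives at the origin, so that the product genuinely lands back in $\Psi(\R^{n+1})$ and not just in $\cS(\R^{n+1})$. With those repairs the proof is complete.
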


\begin{remark}
We observe that in general
\[
    (D^{m}\varphi) \ast k_{(m-1)-(\zvec+1)} \neq D^{m}(\varphi\ast k_{(m-1)-(\zvec+1)}),
\]
when we consider function spaces other than the Lizorkin spaces. The right-hand side of (\ref{FractionalDefinitionForm}) is known as the \textit{Riemann-Liouville fractional derivative}, which is traditionally defined as a fractional integral of the derivative of a function, and the left-hand side is known as the \textit{Caputo fractional derivative}, which is another type of fractional derivative that has certain advantages over the Riemann-Liouville derivative, particularly in the physical sciences, because it allows for initial conditions to be stated in terms of function values (like in classical calculus). For more details, the reader may want to consult  \cite{FractionalEquations1999} or \cite{FractionalIntegrals1993}.
\end{remark}

Next, we summarize some properties of the fractional derivatives and integrals introduced above.

\begin{theorem}[Properties of Fractional Derivatives and Integrals \cite{FractionalIntegrals1993}]
The fractional derivatives and integrals have the following properties:
\begin{enumerate}
    \item[(i)] Semi-group property: For $\zvec,\overline{\zvec} \in \C^{n+1}$ with $\Re(z_j)> -1$ and $\Re(\overline{z_j})> -1$, for all $j\in\N_n$, we have
    \[
        \cD^{\pm(\zvec+\overline{\zvec})}\varphi = \left(\cD^{\pm\zvec}\varphi\right)\prn{\cD^{\pm\overline{\zvec}}\varphi} = \prn{\cD^{\pm\overline{\zvec}}\varphi\right)\left(\cD^{\pm\zvec}\varphi},
    \]
    where $\varphi\in\Phi(\R^{n+1})$.
    \item[(ii)] For $\zvec \in \C^{n+1}$ with $\Re(z_j)> -1$, for all $j\in\N_n$, we have
    \[
        \mathcal{D}^{\zvec}\cD^{-\zvec}\varphi=\cD^{-\zvec}\cD^{\zvec}\varphi = \varphi,
    \]
    where $\varphi\in\Phi(\R^{n+1})$.
\end{enumerate}
\end{theorem}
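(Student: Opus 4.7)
The plan is to push everything onto the Fourier side, where on Lizorkin spaces the operators $\cD^{\pm\zvec}$ become pure multiplications by products of powers of $i\omega_j$, so that both claims reduce to simple exponent arithmetic. The invariance of $\Phi(\R^{n+1})$ under $\cD^{\pm\zvec}$, stated in the preceding proposition, is crucial here: it guarantees that every intermediate object still lies in $\Phi(\R^{n+1})$, so the formal manipulations translate back to genuine identities between elements of the Lizorkin space.

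The first step is to read off the Fourier symbols of the operators. Starting from the representation $\cD^{-\zvec}\varphi = \varphi \ast \check{k}_\zvec$ (a convolution in which $\check{k}_\zvec(t) := k_\zvec(-t)$ absorbs the $+t$ appearing in the defining integral), Proposition \ref{Fouriercomplextruncated} applied componentwise yields
\[
    \wh{\cD^{-\zvec}\varphi}(\omega) \;=\; \wh{\varphi}(\omega)\,\prod_{j=0}^n \frac{1}{(-i\omega_j)^{z_j+1}}.
\]
For $\cD^{\zvec} = D^m \cD^{-\nu}$ with $\nu_j = m_j-z_j$ and $m_j = \ceil{\Re(z_j)+1}$, the Fourier multiplier of $D^m$, namely $\prod_j (i\omega_j)^{m_j}$, combines with the symbol of $\cD^{-\nu}$ so that the integer $m_j$ cancels and leaves a multiplier of shape $\prod_j (i\omega_j)^{z_j+1}$. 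The upshot is that $\cD^{\pm\zvec}$ acts on $\Phi(\R^{n+1})$ as a pure Fourier multiplier of the form $\prod_j (i\omega_j)^{\pm(z_j+1)}$.

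With these symbols in hand, the semi-group property (i) becomes the coordinatewise exponent identity $(z_j+1)+(\overline{z_j}+1) = (z_j+\overline{z_j}+1)+1$, which is exactly the content of the univariate convolution formula $k_{z_j}\ast k_{\overline{z_j}} = k_{z_j+\overline{z_j}+1}$ of Proposition \ref{ConvolutionTruncated}; the negative-exponent case follows by applying the same identity to reciprocal symbols. Commutativity of the compositions in (i) is automatic since pointwise multiplication of Fourier multipliers commutes. Part (ii) is then a direct computation: the multipliers $\prod_j (i\omega_j)^{z_j+1}$ and $\prod_j (i\omega_j)^{-(z_j+1)}$ cancel to the constant $1$, which inverts back to $\varphi$, establishing $\cD^{\zvec}\cD^{-\zvec}\varphi = \varphi = \cD^{-\zvec}\cD^{\zvec}\varphi$.

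The main obstacle is the bookkeeping forced by the ceiling $m_j = \ceil{\Re(z_j)+1}$: one has to verify that $D^m$ and $\cD^{-\nu}$ actually commute on $\Phi(\R^{n+1})$, that the Riemann--Liouville and Caputo orderings noted in the preceding remark coincide in this setting, and that the resulting Fourier multiplier depends only on $\zvec$ and not on the auxiliary integers $m_j$. All three checks ultimately rest on the defining property of $\Psi(\R^{n+1})$, namely that $\wh{\varphi}$ and all its derivatives vanish to infinite order at the origin, so that no distributional ambiguity at $\omega = 0$ enters when one multiplies or divides by fractional powers of $i\omega_j$.
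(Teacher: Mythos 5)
The paper offers no proof of this theorem at all---it is quoted from \cite{FractionalIntegrals1993}---so the only comparison available is with the standard argument, and your strategy (realize $\cD^{\pm\zvec}$ as Fourier multipliers on the Lizorkin space, where the quoted invariance proposition legitimizes the multiplier calculus, and reduce both parts to exponent arithmetic) is exactly that standard argument. The problem is that the exponent arithmetic, which you yourself identify as the main obstacle, does not close under the normalizations used in this paper, and your write-up asserts rather than checks the two places where it breaks.

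Concretely: from $(\cD^{-\zvec}\varphi)(x)=\int_{\R_+^{n+1}}k_{\zvec}(t)\varphi(x+t)\,dt$ and $\wh{k_{z}}=(i\omega)^{-(z+1)}$ you correctly get the symbol $\prod_j(-i\omega_j)^{-(z_j+1)}$ for $\cD^{-\zvec}$. But then $\cD^{-\zvec}\cD^{-\overline{\zvec}}$ has symbol $\prod_j(-i\omega_j)^{-(z_j+\overline{z_j}+2)}$, which is the symbol of $\cD^{-(\zvec+\overline{\zvec}+1)}$, \emph{not} of $\cD^{-(\zvec+\overline{\zvec})}$. The identity you invoke, $k_{z}\ast k_{\overline{z}}=k_{z+\overline{z}+1}$, proves precisely this shifted semigroup law and not the one in the statement; the shift is an artifact of the normalization $k_{\zvec}=t_+^{\zvec}/\Gamma(\zvec+1)$, under which $\cD^{-\zvec}$ is a fractional integral of order $\zvec+1$. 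A correct proof must either establish the shifted identity and flag the discrepancy, or work with a definition whose symbol is $\prod_j(\mp i\omega_j)^{-z_j}$; it cannot silently claim both. The same bookkeeping failure affects (ii): from the paper's definition $\cD^{\zvec}=D^{m}\cD^{-\nu}$ with $\nu_j=m_j-z_j$, the symbol is $\prod_j(i\omega_j)^{m_j}(-i\omega_j)^{-(m_j-z_j+1)}$, of modulus $\prod_j\abs{\omega_j}^{\Re(z_j)-1}$, which is off by two from the $\prod_j(i\omega_j)^{z_j+1}$ you claim and therefore does not cancel against the symbol of $\cD^{-\zvec}$. (Your claimed symbol does match the alternative formula \eqref{FractionalDefinitionForm}, but that formula is inconsistent with the definition via $\nu$, and the proof has to commit to one and track the indices.) A lesser issue: the multipliers are singular on the whole hyperplanes $\{\omega_j=0\}$, while the Lizorkin condition as defined only controls the single point $\omega=0$, so your closing justification that ``no ambiguity at $\omega=0$ enters'' does not by itself license the multiplier manipulations; that burden is carried entirely by the quoted invariance proposition.
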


We observe that for $T_1,T_2\in\Psi'(\R^{n+1})$, convolution exists and is defined in the same way as for ordinary functions. The pair $(\Psi'(\R^{n+1}),\ast)$ is a convolution algebra with the Dirac delta distribution $\delta$ as its unit element.

We end this subsection by defining fractional derivatives and integrals on the dual space $\Psi'(\R^{n+1})$.

\begin{definition}[Fractional Derivative and Integral on $\Psi'(\R^{n+1})$ {\cite{FractionalIntegrals1993}}]
Let $\zvec \in \C^{n+1}$ with $\Re(z_j)> -1$ for all $j\in\N_n$ and let $T\in\Psi'(\R^{n+1})$. The fractional derivative operator $\cD^\zvec$ on $\Psi'(\R^{n+1})$ is defined by
\be\label{FractionalDerivativeDual}
    \inn{\cD^\zvec T}{\varphi} \coloneqq \inn{(D^{\ceil{\Re(\zvec)+1}}T)\ast k_{\ceil{\text{Re}(\zvec)+1}-1-(\zvec+1)}}{\varphi},
\ee
and the fractional integral operator $\cD^{-\zvec}$ by
\be\label{FractionalIntegralDual}
    \inn{\cD^{-\zvec}T}{\varphi} \coloneqq \inn{T\ast k_{\zvec+1}}{\varphi},
\ee
where $\varphi\in \Psi(\R^{n+1})$.
\end{definition}
\subsubsection{Complex B-Splines}
We can now proceed to the main goal of this section. We define complex B-splines and complex box splines as distributional solutions of a certain fractional differential equation and extend the ideas and concepts presented in, for instance, \cite{Forster2010}.

The following result was shown in \cite{Forster2010}.
\begin{proposition}
Let $z \in \C$ with $\Re(z) \geq -1$ and let $\{a_k \,:\, k \in \N\} \in \ell^{\infty}(\R)$. The complex B-spline
\[
    \inn{B_z}{\varphi} = \inn{\nabla^{z+1}k_z}{\varphi},
\]
with $\varphi \in \Psi(\R)$, is a distributional solution of the equation
\begin{equation}\label{Splinecomplexdegree}
    \inn{\mathcal{D}^z T}{\varphi} = \sum_{k \geq 0} a_k \inn{\tau_k \delta}{\varphi}.
\end{equation}
\end{proposition}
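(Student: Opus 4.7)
The strategy is to exploit the defining representation $B_z = \nabla^{z+1}k_z$ (applied to $\varphi\in\Psi(\R)$) together with two ingredients: the translation invariance of $\mathcal{D}^z$ on $\Psi'(\R)$, and the fundamental-solution identity $\mathcal{D}^z k_z = \delta$. The explicit coefficients that then fall out of the calculation are $a_k = (-1)^k\binom{z+1}{k}$; these lie in $\ell^1(\R) \subset \ell^{\infty}(\R)$ by the absolute-convergence bound $\sum_{k\geq 0}|\binom{z+1}{k}| < \infty$ proved in Proposition~\ref{WellDefined}.

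First, I would unfold the complex backward difference operator from Definition~\ref{ComplexBackward} inside the dual pairing, writing
\[
    \inn{B_z}{\varphi} = \sum_{k\geq 0}(-1)^k\binom{z+1}{k}\inn{\tau_k k_z}{\varphi}, \qquad \varphi\in\Psi(\R),
\]
with the numerical series converging absolutely. Applying $\mathcal{D}^z$ to both sides via the duality on $\Psi'(\R)$ and using its weak$^*$-continuity, I would commute the operator with the sum. Since $\mathcal{D}^z$ factors through $D^m$ (with $m = \ceil{\Re(z)+1}$) and convolution with a fixed Lizorkin kernel as in \eqref{FractionalDefinitionForm}, both components commute with translations; hence $\mathcal{D}^z\tau_k k_z = \tau_k\mathcal{D}^z k_z$. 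It then remains to identify $\mathcal{D}^z k_z$.

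For the key identity, I would combine the Riemann--Liouville form of $\mathcal{D}^z$ in \eqref{FractionalDefinitionForm} with the rule $D^m k_z = k_{z-m}$ from Remark~\ref{ImportantRemark}(ii) and the convolution semigroup $k_\alpha\ast k_\beta = k_{\alpha+\beta+1}$ of Proposition~\ref{ConvolutionTruncated} to collapse the resulting convolution to $k_{-1}$, which is identified with $\delta$ via the meromorphic continuation of the family $\{k_\alpha\}$. Substituting back yields
\[
    \inn{\mathcal{D}^z B_z}{\varphi} = \sum_{k\geq 0}(-1)^k\binom{z+1}{k}\inn{\tau_k\delta}{\varphi},
\]
which is precisely the required equation with $a_k := (-1)^k\binom{z+1}{k}$.

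The main obstacle is rigorously justifying $\mathcal{D}^z k_z = \delta$ within $\Psi'(\R)$: the intermediate expression $k_{-1}$ is not a function but a distribution obtained by meromorphic continuation past the pole of $\Gamma(\alpha+1)$ at $\alpha = -1$, and the convolution producing it lies at the boundary of absolute convergence. The resolution is exactly the reason for introducing the Lizorkin framework: test functions in $\Psi(\R)$ vanish to infinite order at the origin, which regularises the singular kernel inside the dual pairing and makes the identifications valid on $\Psi'(\R)$, as developed in \cite{FractionalIntegrals1993}. A secondary technical point is the weak$^*$-continuity of $\mathcal{D}^z$ needed to interchange it with the infinite sum; this follows from the continuity of $D^m$ on $\Psi'(\R)$ and of convolution with a fixed tempered distribution.
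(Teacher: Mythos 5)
Your proposal is correct and follows essentially the same route the paper takes: the paper defers this univariate statement to \cite{Forster2010}, but its own proof of the multivariate analogue at the end of Section 5 proceeds exactly as you do, unfolding $\nabla^{z+1}$ into an absolutely convergent sum, commuting $\mathcal{D}^z$ with translations, and collapsing $D^m k_z \ast k_{(m-1)-(z+1)}$ to $k_{-1}=\delta$ so that $a_k=(-1)^k\binom{z+1}{k}$. You also correctly flag the identification $k_{-1}=\delta$ and the interchange of $\mathcal{D}^z$ with the series as the points requiring the Lizorkin framework, which is consistent with the paper's treatment.
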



\subsubsection{Complex Box Splines}

Consider $\zvec\in\C^{n+1}$ with $\Re(z_j)\geq -1$ for $j\in\N_n$ and recall the relationship between the distributional derivative of $\cT_{\zvec}(\cdot|M)$ and a supplementary condition within the range $-1 < \Re(z_j) < 0$ for all $j\in\N_n$:
\[
    \inn{D_{M_n}\cT_{\zvec}(\cdot|M)}{\varphi} = \inn{\cT_{\zvec-1}(\cdot|M)}{\varphi(\cdot)-\varphi(0)},
\]
where the additional condition $\varphi(0) = 0$ simplifies this to
\[
    \inn{D_{M_n}\mathcal{T}_{\zvec}(\cdot|M)}{\varphi} = \inn{\cT_{\zvec-1}(\cdot|M)}{\varphi(\cdot)}.
\]

This expression is consistent with the conventional result for the derivative of $\cT_{\zvec}(\cdot|M)$ in the distributional framework, which we proved in {Theorem \ref{MixedTruncated}}. We recall that by definition of multivariate truncated complex powers, this derivative was equivalent to a mixed partial derivative of $k_{\zvec}$ in the weak sense. 

Our goal is to compute $\cD^z\left[\tau_{km_j}\mathcal{T}_{\zvec}(\cdot|M)\right]$ for $\Re(z_j) \geq -1$ for all $j\in\N_n$. This is equivalent to computing $\cD^z(\tau_{k}k_{\zvec})$, where we can apply (\ref{FractionalDerivativeDual}). 
We recall that
\begin{equation*}
    D^{m}(\tau_{k}k_{\zvec}) = \tau_kk_{\zvec-m}.
\end{equation*}

Including the special case where $-1 < \Re(z) < 0$, we can further write:

\begin{align*} 
    \inn{\cD^z\left[\tau_{km_j}\mathcal{T}_{\zvec}(\cdot|M)\right]}{\varphi} &= \inn{\cD^z(\tau_{k}k_{\zvec})}{\varphi(M\cdot)}\\
    & = \inn{(D^{m}\tau_kk_{\zvec})\ast k_{(m-1)-(\zvec+1)}}{\varphi(M\cdot)} \\
    &= \inn{\tau_kk_{\zvec-m}\ast k_{(m-1)-(\zvec+1)}}{\varphi(M\cdot)}\\
    & = \inn{\tau_kk_{\zvec-m+(m-1)-(\zvec+1)+1}}{\varphi(M\cdot)} \\
    &= \inn{\tau_kk_{-1}}{\varphi(M\cdot)}\\
    & = \inn{\tau_k\delta}{\varphi(M\cdot)}\\
    & = \inn{\tau_{km_j}\delta}{\varphi},
\end{align*}
where we have used the convolution property of the multivariate truncated complex powers, that we showed in {Proposition \ref{ConvolutionMultivariateTruncated}}.

Now, similarly to the univariate case we can define multivariate splines via certain differential operators and introduce splines of complex degree $\zvec$ with $\Re(z_j) \geq -1$ for $j\in\N_n$ as follows.

\begin{definition}
Let $\zvec\in\C^{n+1}$ with $\Re(z_j)\geq -1$ for $j=0,\ldots,n$, and let $\{\boldsymbol{a}_k\,:\, k\in\N\}\in\ell^{\infty}(\R^{n+1})$ be a bounded sequence of vectors $\boldsymbol{a}_k = (a_{k,0}, \ldots, a_{k,n})$. A distributional solution of the equation
\begin{equation}\label{SplineComplexDegreeMultivariate}
    \inn{\cD^{\zvec}T}{\varphi} = \sum_{k\geq 0}\prn{\prod_{j=0}^{n} a_{k,j}}\inn{\tau_k\delta}{\varphi},
\end{equation}
is called a box spline of complex degree $\zvec$.
\end{definition}

In order to show that box splines of complex degree $z$ exist, we show that complex box splines provides a solution to (\ref{SplineComplexDegreeMultivariate}).

\begin{proposition}
The complex box spline
\[
    \inn{\cB_{\zvec}(\cdot|M)}{\varphi} = \inn{\prod_{j=0}^n\nabla_{m_j}^{z_j+1}\cT_{\zvec}(\cdot|M)}{\varphi},
\]
with $\varphi\in\Psi(\R^d)$, is a distributional solution of the equation (\ref{SplineComplexDegreeMultivariate}).
\end{proposition}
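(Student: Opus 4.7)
The plan is to combine the backward-difference representation of $\cB_{\zvec}(\cdot|M)$ from Theorem \ref{Multivariatebackwardcomplex} with the identity $\inn{\cD^{\zvec}[\tau_{km_j}\cT_{\zvec}(\cdot|M)]}{\varphi} = \inn{\tau_{km_j}\delta}{\varphi}$ derived immediately above the statement. First, I expand each factor $\nabla_{m_j}^{z_j+1}$ as its defining series and use that the translations $\tau_{k_jm_j}$ commute and compose to $\tau_{\sum_j k_jm_j}$, obtaining
\[
    \inn{\cB_{\zvec}(\cdot|M)}{\varphi} = \inn{\sum_{k_0,\ldots,k_n\geq 0}\prod_{j=0}^n (-1)^{k_j}\binom{z_j+1}{k_j}\,\tau_{\sum_{j=0}^n k_jm_j}\cT_{\zvec}(\cdot|M)}{\varphi}.
\]

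Next, I apply $\cD^{\zvec}$ term by term and appeal to linearity. The key observation is that the precomputation generalizes verbatim from the single-direction shift $\tau_{km_j}$ to the composite shift $\tau_{\sum k_jm_j}$: under the change of variable $y = Mt$, $\tau_{\sum k_jm_j}\cT_{\zvec}(\cdot|M)$ corresponds in parameter space to $T_{\tau_{\boldsymbol{k}}k_{\zvec}}(\cdot|M)$ with multi-index $\boldsymbol{k}=(k_0,\ldots,k_n)$, and the same chain of identities $D^{m}(\tau_{\boldsymbol{k}}k_{\zvec}) = \tau_{\boldsymbol{k}}k_{\zvec-m}$, $k_{\zvec-m}\ast k_{(m-1)-(\zvec+1)}=k_{-1}=\delta$ (Proposition \ref{ConvolutionTruncated}), and transfer back yields
\[
    \inn{\cD^{\zvec}\bigl[\tau_{\sum k_jm_j}\cT_{\zvec}(\cdot|M)\bigr]}{\varphi} = \inn{\tau_{\sum k_jm_j}\delta}{\varphi}.
\]
Summing then produces
\[
    \inn{\cD^{\zvec}\cB_{\zvec}(\cdot|M)}{\varphi} = \sum_{k_0,\ldots,k_n\geq 0}\prod_{j=0}^n (-1)^{k_j}\binom{z_j+1}{k_j}\,\inn{\tau_{\sum k_jm_j}\delta}{\varphi},
\]
which is \eqref{SplineComplexDegreeMultivariate} once the index $k$ in the definition is read as a multi-index $\boldsymbol{k}\in\N^{n+1}$, the shift $\tau_k$ as $\tau_{\sum k_jm_j}$, and the coefficients as $a_{\boldsymbol{k},j} := (-1)^{k_j}\binom{z_j+1}{k_j}$. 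The required $\ell^{\infty}$ condition holds because $|\binom{z_j+1}{k_j}|\to 0$ as $k_j\to\infty$ by the Gamma-asymptotics used in Proposition \ref{WellDefined}.

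The main obstacle is rigorously justifying the exchange of $\cD^{\zvec}$ with the $(n+1)$-fold infinite sum. On the Lizorkin dual $\Psi'(\R^d)$, $\cD^{\zvec}$ acts as a finite-order distributional derivative composed with convolution against $k_{(m-1)-(\zvec+1)}$; transferring this operator onto the test function produces a new $\psi\in\Psi(\R^d)$ with rapid decay in every direction $m_j$. Absolute summability of each univariate series $\sum_{k}|\binom{z_j+1}{k}|$ from Proposition \ref{WellDefined}, combined with this rapid decay, makes the numerical series $\sum_{\boldsymbol{k}}\prod_j|\binom{z_j+1}{k_j}|\,|\inn{\tau_{\sum k_jm_j}\cT_{\zvec}(\cdot|M)}{\psi}|$ absolutely convergent, and Fubini legitimates the interchange. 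This is the multivariate analogue of the argument for complex B-splines in \cite{Forster2010} and introduces no new conceptual difficulty beyond careful bookkeeping across the $n+1$ directions.
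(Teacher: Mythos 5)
Your proposal is correct and follows essentially the same route as the paper: expand $\cB_{\zvec}(\cdot|M)$ via the backward-difference representation of Theorem \ref{Multivariatebackwardcomplex}, apply $\cD^{\zvec}$ term by term using absolute convergence of the binomial series, and invoke the precomputed identity $\inn{\cD^{\zvec}[\tau_{km_j}\cT_{\zvec}(\cdot|M)]}{\varphi}=\inn{\tau_{km_j}\delta}{\varphi}$. Your explicit multi-index expansion with composite shifts $\tau_{\sum_j k_jm_j}$ is in fact more careful bookkeeping than the paper's factor-by-factor product of pairings, and it matches the sum-of-shifted-deltas form of \eqref{SplineComplexDegreeMultivariate} more directly.
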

\begin{proof}
Let $\zvec\in\C^{n+1}$ with $\Re(z_j)\geq -1$ for $j\in\N_n$. By the linearity of $\cD^z$ and our above computations, we obtain
\begin{align*}
    \inn{\cD^z\mathcal{B}_{\zvec}(\cdot|M)}{\varphi} &=\inn{\cD^z\nabla^{z+1}k_z}{\varphi}\\
    & =\inn{\prod_{j=0}^n\sum_{k\geq 0}(-1)^k\binom{z_j+1}{k}\cD^z\left[\tau_{km_j}\cT_{\zvec}(\cdot|M)\right]}{\varphi} \\
    &=\prod_{j=0}^n\sum_{k\geq 0}(-1)^k\binom{z_j+1}{k}\inn{\cD^z\left[\tau_{km_j}\cT_{\zvec}(\cdot|M)\right]}{\varphi}\\
    & =\prod_{j=0}^n\sum_{k\geq 0}(-1)^k\binom{z_j+1}{k}\inn{\cD^z\left[\tau_{k}k_{\zvec}\right]}{\varphi(M\cdot)} \\
    &=\prod_{j=0}^n\sum_{k\geq 0}(-1)^k\binom{z_j+1}{k}\inn{\tau_k\delta}{\varphi(M\cdot)}\\
    & =\prod_{j=0}^n\sum_{k\geq 0}(-1)^k\binom{z_j+1}{k}\inn{\tau_{km_j}\delta}{\varphi} \\
\end{align*}
as the series defining the complex backward difference operator converges absolutely.
\end{proof}

\bibliographystyle{plain}
\bibliography{references}

\end{document}